\documentclass[a4paper]{amsart}

\usepackage[T1]{fontenc}
\usepackage[english]{babel}
\usepackage[alphabetic]{amsrefs}
\usepackage{amssymb}
\usepackage{ytableau}
\usepackage{geometry}
\usepackage{fullpage}
\usepackage[shortlabels]{enumitem}
\usepackage{graphicx}
\usepackage{hyperref}
\usepackage{mathrsfs}
\usepackage{mathtools}

\newtheorem{theorem}{Theorem}[section]
\newtheorem{lemma}[theorem]{Lemma}
\newtheorem{proposition}[theorem]{Proposition}
\newtheorem{corollary}[theorem]{Corollary}
\newtheorem{conjecture}[theorem]{Conjecture}
\newtheorem*{maintheorem}{Main Theorem}
\newtheorem{example}[theorem]{Example}
\newtheorem{definition}[theorem]{Definition}
\numberwithin{equation}{section}

\renewcommand{\tbinom}[2]{\genfrac{[}{]}{0pt}{}{#1}{#2}_t}
\DeclareMathOperator{\Res}{Res}

\allowdisplaybreaks[4]

\title{Schur positivity of the nabla operator on two-column modified Hall--Littlewood polynomials}

\author{Menghao Qu}
\address{Scuola Normale Superiore\\
	Piazza dei Cavalieri 7,
	56126 Pisa\\ Italy}\email{menghao.qu@sns.it}

\begin{document}
\begin{abstract}
In this paper, we investigate the Schur positivity of modified Hall--Littlewood polynomials indexed by two-column partitions under the action of the $\nabla$ operator. Specifically, we resolve two conjectures posed by Bergeron, Garsia, Haiman, and Tesler in the two-column case. Furthermore, our approach demonstrates that these results can be extended to arbitrary powers $\nabla^k$ for all integers $k\geq 1$.
\end{abstract}
\maketitle

\section{Introduction}
Building upon the foundational work of I. G. Macdonald \cite{Macdonald1988}, the modified Macdonald polynomials $\tilde{H}_{\mu}[X;q,t]$, introduced by Garsia and Haiman \cite{MR1214091}, have emerged as a central object of study in modern algebraic combinatorics. They constitute a remarkable basis of symmetric functions situated at the intersection of algebraic combinatorics, representation theory, and algebraic geometry. In a celebrated result, Haiman \cite{MR1839919} established that these polynomials arise as the Frobenius characteristic of the doubly graded Garsia–Haiman module \cite{MR1214091}. Here, the Frobenius characteristic map serves as a crucial bridge between symmetric functions and the representation theory of the symmetric group $\mathcal{S}_{n}$, mapping the irreducible characters $\chi^{\lambda}$ to the Schur functions $s_{\lambda}$. Haiman's breakthrough ultimately resolved Macdonald's positivity conjecture, which asserts that the Schur expansion of Macdonald polynomials yields coefficients in $\mathbb{N}[q,t]$, meaning that they are polynomials in $q$ and $t$ with nonnegative integer coefficients.

A pivotal tool in navigating this rich algebraic structure is the $\nabla$ operator, introduced by Bergeron and Garsia in \cites{MR1726826}. By definition, $\nabla$ is a linear operator that acts diagonally on the modified Macdonald basis via the eigenvalue equation $\nabla \tilde{H}_{\mu}[X;q,t] = T_{\mu}\tilde{H}_{\mu}[X;q,t]$, where $T_{\mu} = t^{n(\mu)}q^{n(\mu')}$ with $n(\mu)=\sum_{i\geq 1}(i-1)\mu_i$. Over the past few decades, understanding the action of the $\nabla$ operator on various symmetric functions has been a major driving force in the field, as evidenced by several foundational works \cites{MR1256101,MR1701592,MR1803316,MR2138143,MR2115257,MR2957232}. A central, overarching theme in this pursuit is the quest for Schur positivity. In the realm of symmetric functions, establishing Schur positivity is of profound importance: it typically implies that the function in question is not merely a formal algebraic sum, but rather the genuine character of a naturally occurring $\mathcal{S}_n$-module, thereby revealing deep underlying geometric or representation-theoretic phenomena. A quintessential example of this phenomenon is the symmetric function $\nabla e_n$, whose Schur positivity follows naturally from its realization as the Frobenius characteristic of the module of diagonal harmonics \cite{MR1256101}. Beyond its explicit monomial expansion, which was conjectured in \cite{MR2115257} and established by Carlsson and Mellit \cite{MR3787405} in their proof of the shuffle conjecture, the Schur positivity of $\nabla e_n$ can also be gracefully deduced. Namely, the combinatorial formula for $\nabla e_n$ can be formulated as a weighted sum of LLT polynomials, introduced by Lascoux, Leclerc, and Thibon \cite{MR1434225}. Relying on the foundational result of Grojnowski and Haiman \cite{grojnowski2007affine}, who proved the Schur positivity of all LLT polynomials, this expansion immediately yields the Schur positivity of $\nabla e_n$.  Inspired by these profound connections, the algebraic combinatorics community has extensively investigated the action of the $\nabla$ operator on other fundamental bases. For instance, it is well documented that the symmetric functions $\nabla\omega p_n$ \cites{MR2255191,MR3682738}, $\nabla s_{\lambda}$ \cites{MR2418288,MR4930329,extendingsf}, $\nabla m_{\mu}$ \cites{MR3940644,MR4921577}, $\Delta_{e_k}e_n$ \cites{MR3811519,MR4401822,MR4553915}, and $\Delta_{s_{\lambda}}e_n$ \cite{MR3932967} all exhibit Schur positivity (though some cases remain conjectural), up to a potential global sign. 

Within this context, a natural yet formidable problem is to understand the action of $\nabla$ on the modified Hall--Littlewood polynomials. These are defined by specializing the modified Macdonald polynomials at $q=0$, and their Schur positivity is well known, as they arise as the Frobenius characteristics of the Garsia--Procesi modules \cite{MR1168926}. In an influential paper \cite{MR1803316}, Bergeron, Garsia, Haiman, and Tesler posed the following two conjectures regarding this highly nontrivial action.

\begin{conjecture}\cite{MR1803316}*{Conjecture II}\label{cnj-2}
For any partitions $\lambda$, $\mu$, we have
\begin{align*}
\langle(-1)^{|\mu|-\ell(\mu)}\nabla \tilde{H}_{\mu}[X;0,t],s_{\lambda}\rangle\in \mathbb{N}[q,t].
\end{align*} 
\end{conjecture}

\begin{conjecture}\cite{MR1803316}*{Conjecture III}\label{cnj-3}
For any partition $\mu$, $\nabla \omega\tilde{H}_{\mu}[X;0,t^{-1}]$ is Schur positive. Furthermore, if $\lambda$ and $\mu$ are partitions such that $\mu \unrhd \lambda$ in dominance order, then the difference 
\begin{align*}
\nabla\omega\tilde{H}_{\mu}[X;0,t^{-1}]-\nabla\omega\tilde{H}_{\lambda}[X;0,t^{-1}]    
\end{align*}
is also Schur positive. 
\end{conjecture}

The fundamental difficulty in proving these conjectures stems from the fact that, while $\nabla$ acts by scalar multiplication on the modified Macdonald polynomials via $\nabla\tilde{H}_{\mu}[X;q,t]=T_{\mu}\tilde{H}_{\mu}[X;q,t]$, the modified Hall–Littlewood polynomials $\tilde{H}_{\mu}[X;0,t]$ are no longer its eigenfunctions.  Because obtaining an explicit expansion of $\tilde{H}_{\mu}[X;0,t]$ in the modified Macdonald basis remains intractable, the image of these functions under the $\nabla$ operator is highly elusive, making their exact Schur-positive expansions extremely difficult to predict.

In this article, we successfully overcome this obstacle for the two-column case $\mu = (2^a, 1^b)$. We completely settle Conjectures \ref{cnj-2} and \ref{cnj-3} for these partitions by establishing explicit algebraic identities that evaluate the action of $\nabla$. In fact, our approach goes further, yielding a strictly stronger set of generalized results regarding arbitrary powers of $\nabla$, as summarized in our main theorem below.

\begin{maintheorem}

Let $a$ and $b$ be nonnegative integers, and let $k$ be a positive integer. Define a lower-triangular transition matrix $M=(M_{i,j})_{0\leq i,j\leq a}$, where the rows and columns are indexed from $0$ to $a$, with entries given by
\begin{align*}
M_{i,j}=T_{2^{a-i}1^{b+2i}}\tbinom{a-j}{i-j}\in \mathbb{N}[q,t].     
\end{align*}
Let $M_{i,j}^{(k)}$ denote the entry of the matrix power $M^k$ corresponding to the indices $i$ and $j$. This entry inherently belongs to $\mathbb{N}[q,t]$ as well. Then, the following identities hold: 
\begin{enumerate}
\item[(i)] For any integer $j$ such that $0\leq j\leq a$,
\begin{align*}
\nabla^{k}\omega\tilde{H}_{2^{a-j}1^{b+2j}}[X;0,t^{-1}]=\sum_{i=j}^{a} M_{i,j}^{(k)}\omega\tilde{H}_{2^{a-i}1^{b+2i}}[X;0,t^{-1}].
\end{align*}
\item[(ii)] We have
\begin{align*}
(-1)^{a}\nabla^{k}\tilde{H}_{2^a1^b}[X;0,t]=\sum_{i=0}^{a} q^{a}t^{\binom{2a+b}{2}+\binom{a+b}{2}}M_{i,0}^{(k-1)}\omega\tilde{H}_{2^{a-i}1^{b+2i}}[X;0,t^{-1}].
\end{align*}
\end{enumerate}
\end{maintheorem}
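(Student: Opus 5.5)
The plan is to treat the case $k=1$ as the core and get general $k$ by iteration. Part (i) for $k=1$ says that $\nabla$ preserves the span $V$ of $F_i:=\omega\tilde{H}_{2^{a-i}1^{b+2i}}[X;0,t^{-1}]$, $0\le i\le a$. In the basis $(F_i)$ it acts by the lower-triangular matrix $M$. Once this is known, applying $\nabla$ repeatedly gives $\nabla^k F_j=\sum_i M^{(k)}_{i,j}F_i$ immediately. Part (ii) then reduces to the single identity
\[
(-1)^a\nabla\tilde{H}_{2^a1^b}[X;0,t]=q^at^{\binom{2a+b}{2}+\binom{a+b}{2}}F_0 .
\]
Applying (i) with $k-1$ to $F_0$ then produces the stated sum with $M^{(k-1)}_{i,0}$. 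Nonnegativity of the entries of $M^k$ is automatic, since $M$ has entries in $\mathbb{N}[q,t]$.

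To get the $k=1$ statements, I would diagonalize $\nabla$ on $V$ using modified Macdonald polynomials. The diagonal of $M$ is $T_{2^{a-i}1^{b+2i}}$, which is exactly the list of $\nabla$-eigenvalues of $\tilde H_{\nu_i}[X;q,t]$ with $\nu_i=2^{a-i}1^{b+2i}$. This suggests $V=\mathrm{span}\{\tilde H_{\nu_i}[X;q,t]\}$ over $\mathbb{Q}(q,t)$ and $M=PDP^{-1}$, with $D=\mathrm{diag}(T_{\nu_i})$.

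The concrete step is an explicit triangular expansion
\[
\tilde H_{\nu_j}[X;q,t]=\sum_{i\ge j}P_{i,j}(q,t)\,F_i .
\]
Since $\nu_j'=(a+b+j,a-j)$ has $n(\nu_j')=a-j$, $\tilde H_{\nu_j}$ is a polynomial of degree $a-j$ in $q$. Its top $q$-coefficient should be obtained through $\omega\tilde H_\mu[X;q,t]=T_\mu\tilde H_\mu[X;q^{-1},t^{-1}]$ at $q\to\infty$, which yields $F_j$ up to a power of $t$. The lower coefficients should come from the two-column (Stembridge/Fishel-type) formula. I expect $P_{i,j}$ to be a power of $q$ times a power of $t$ times $\tbinom{a-j}{i-j}$, possibly up to signs, with the inverse of the same form. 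This is the $q$-binomial inversion that underlies the binomials in $M$.

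With $P$ in hand, I would verify the identity $PD=MP$ directly. Entrywise it becomes a $t$-binomial convolution identity of $q$-Vandermonde type, because the $T_{\nu_i}$ are explicit monomials $q^{a-i}t^{n(\nu_i)}$. For the base case of (ii), I would expand $\tilde H_{2^a1^b}[X;0,t]$ in the same Macdonald basis by setting $q=0$ in the transition, i.e.\ via the expansion of $\tilde H_{\nu_i}[X;0,t]$. I would apply $\nabla$ and then recognise the result as a monomial multiple of $F_0$ using the inverse of $P$. Alternatively, I would compare both sides through $\omega$ and the $q\to q^{-1},t\to t^{-1}$ symmetry.

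The main obstacle is establishing the explicit expansion of $P$ and its inverse. Everything else is bookkeeping of monomials $T_\nu$ and standard $t$-binomial identities. For that step, I would first try to prove the expansion by induction on $a$ using the Pieri/Garsia–Haiman recursions for two-column Macdonald polynomials, checking small cases ($a=1,2$) by hand to pin down the exact monomial prefactors.
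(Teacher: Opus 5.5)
Your architecture is correct, and it gives a genuinely different and shorter proof than the paper's. Two things need fixing: your guess about the shape of $P$ is wrong, and the step you flag as the main obstacle is actually a one-liner needing no Pieri induction. Apply $(q,t)\mapsto(q^{-1},t^{-1})$ and $\omega$ to \eqref{eq-Mac2HL} and use \eqref{equ-HtwHt}. This gives at once
\[
\tilde H_{\nu_j}[X;q,t]=\sum_{i=j}^{a}P_{i,j}F_i,\qquad P_{i,j}=T_{\nu_j}\,q^{-(i-j)}\,t^{(i-j)(b+i+j)}\,(q^{-1}t^{b+i+j+1};t)_{a-i}\tbinom{a-j}{i-j}.
\]
The Pochhammer factor is unavoidable. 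Already for $a=1$ the $T_{\nu_0}$-eigenvector of $M$ is proportional to $(T_{\nu_0}-T_{\nu_1},T_{\nu_1})$, and $P_{0,0}=t^{\binom{b+1}{2}}(q-t^{b+1})$. Likewise $P^{-1}$ has denominators in $q$; it is the expansion of $F_j$ in the Macdonald basis that the paper extracts from Theorem \ref{thm-nablaHt} and \eqref{eq-HL2Mac}.

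With this $P$, use Lemma \ref{lem-tbinomexchange} and $T_{\nu_j}/T_{\nu_i}=q^{i-j}t^{-(i-j)(b+i+j)}$, and cancel $(q^{-1}t^{b+i+j+1};t)_{a-i}$. The $(i,j)$ entry of $MP=PD$ then reduces to $g_{i-j}(q^{-1}t^{b+2j+1})=1$, where
\[
g_n(y):=\sum_{s=0}^{n}\tbinom{n}{s}\,y^{s}t^{s(s-1)}(yt^{s};t)_{n-s}.
\]
The $t$-Pascal rule gives $g_n(y)=g_{n-1}(ty)$, so $g_n\equiv 1$.

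For the base case of (ii) you need neither $P^{-1}$ nor a separate expansion. Each $P_{i,0}$ is a polynomial in $q$, so setting $q=0$ gives $\tilde H_{2^a1^b}[X;0,t]=\sum_i P_{i,0}(0,t)F_i$. Applying $\nabla$ through $M$, the coefficient of $F_l$ is $T_{\nu_l}\sum_i\tbinom{a-i}{l-i}P_{i,0}(0,t)$. Row $l$ of $MP=PD$ says this inner sum, before specializing, equals $P_{l,0}T_{\nu_0}/T_{\nu_l}$, which is divisible by $q^{l}$. Hence it vanishes at $q=0$ for $l\ge 1$. The surviving coefficient $T_{\nu_0}P_{0,0}(0,t)$ is exactly $(-1)^aq^at^{\binom{2a+b}{2}+\binom{a+b}{2}}$.

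Compared with the paper, your logical order is reversed. The paper first inverts \eqref{eq-Mac2HL} to get \eqref{eq-HL2Mac}, using induction and $t$-Chu--Vandermonde. It then proves the $k=1$ case of (ii), Theorem \ref{thm-nablaHt}, analytically: a rational function of $q$ is shown to be constant via its limit as $q\to\infty$ and pairwise residue cancellation at $q=t^p$. That cancellation requires the specialization coincidence of Lemma \ref{lem-Hlambda=Hmu}. Only after that does the paper derive (i) for $k=1$, using Theorem \ref{thm-nablaHt} to write $F_j$ in the Macdonald basis.

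Your route proves (i) first, from the forward expansion, the $\omega$-symmetry and one $t$-binomial identity, and then gets (ii) by specializing $q\to 0$. It therefore bypasses the inverse expansion, the residue analysis and Lemma \ref{lem-Hlambda=Hmu} entirely. The paper's route in exchange yields \eqref{eq-HL2Mac} as a result in its own right. Through the appendix it also gives a conceptual explanation of Theorem \ref{thm-nablaHt} via the Carlsson--Mellit involution $\mathcal N$. The passage to $\nabla^k$ by matrix powers is identical in both.
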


As an immediate consequence of these explicit identities and the established Schur positivity of the transformed modified Hall--Littlewood polynomials $\omega\tilde{H}_{\mu}[X;0,t^{-1}]$, we affirmatively settle Conjectures \ref{cnj-2} and \ref{cnj-3} in the two-column case. A detailed discussion of these results is deferred to Section 6.

The remainder of this paper is organized as follows. Section 2 provides a brief background on symmetric functions and $q$-series, summarizing the essential lemmas utilized in our subsequent proofs. Section 3 derives the modified Macdonald expansion for the two-column modified Hall--Littlewood polynomials. In Section 4, we evaluate their image under the $\nabla$ operator, thereby partially resolving Conjecture \ref{cnj-2}. Furthermore, a secondary, independent proof utilizing tools from the Dyck path algebra is presented in Appendix A. Section 5 uses this evaluation as a central tool to resolve Conjecture \ref{cnj-3} in the two-column case, and we further generalize our findings to arbitrary powers of the $\nabla$ operator in Section 6.

\section{Preliminary}

In this section, we briefly recall standard notation and preliminary concepts regarding integer partitions, Young tableaux, symmetric functions, and $q$-series. We restrict our exposition to the essentials required for our subsequent proofs. We refer the reader to \cites{MR1354144, MR1676282, MR2371044, MR2538310} for a comprehensive introduction to the theory of symmetric functions and Macdonald polynomials, and to \cite{MR2128719} for an in-depth overview of $q$-series.

\subsection{Partitions and tableaux}
A \emph{partition} $\lambda$ is a finite nonincreasing sequence $\lambda_{1}\geq\lambda_{2}\geq\cdots \geq\lambda_{\ell}>0$ of positive integers. Each $\lambda_{i}$ is called the $i$-th \emph{part} of $\lambda$. The number of parts, denoted by $\ell(\lambda)$, is called the \emph{length} of $\lambda$, and the sum of its parts, $|\lambda|:=\sum_{i=1}^{\ell}\lambda_{i}$, is called the \emph{size} of $\lambda$. We write $\lambda\vdash n$ to denote that $\lambda$ is a partition of size $n$.

For any two partitions $\lambda$ and $\mu$ (extended by trailing zeros if necessary), we define $\lambda \subseteq\mu$ if $\lambda_i\leq\mu_i$ for all $i\geq 1$. Furthermore, given two partitions of the same integer $n$, denoted $\lambda,\mu\vdash n$, we say that $\lambda$ \emph{dominates} $\mu$, written $\lambda\unrhd\mu$, if for all $k\geq 1$, $\sum_{i=1}^k \lambda_i\geq\sum_{i=1}^k\mu_i$.

The \emph{Young diagram} associated with a partition $\lambda$ is a left-aligned array of unit squares, referred to as \emph{cells}, with $\lambda_i$ cells in the $i$-th row. By convention, we often use $\lambda$ to denote both the partition and its diagram. The \emph{conjugate partition}, denoted by $\lambda'$, is defined as the partition whose Young diagram is obtained from that of $\lambda$ via reflection across the main diagonal. In this article, we adopt the French notation for these diagrams.

Given a partition $\mu$ and a cell $c\in\mu$, let the $\mathrm{arm}_{\mu}(c)$, $\mathrm{leg}_{\mu}(c)$, $\mathrm{coarm}_{\mu}(c)$, and $\mathrm{coleg}_{\mu}(c)$ denote the number of cells strictly between $c$ and the boundary of $\mu$ in the East, North, West, and South directions, respectively. We now introduce several standard partition statistics that will be used throughout this work. First, define $n(\mu):=\sum_{i=1}^{\ell}(i-1)\mu_i$. We then define the polynomials:
\begin{align}
T_{\mu}:=q^{n(\mu')}t^{n(\mu)}=\prod_{c\in \mu}q^{\mathrm{coarm}_{\mu}(c)}t^{\mathrm{coleg}_{\mu}(c)}\quad  \mathrm{and}\quad B_{\mu}:=\sum_{c\in \mu}q^{\mathrm{coarm}_{\mu}(c)}t^{\mathrm{coleg}_{\mu}(c)}.
\end{align}

\begin{example}
The following diagrams illustrate the partition $\lambda=(3,3,1)$, its conjugate $\lambda'=(3,2,2)$, and the pair $(\mathrm{coarm}, \mathrm{coleg})$ for the cells in $\lambda$. Note that the containment $(3,2)\subseteq(3,3,1)$ and the dominance $(3,3,1)\unrhd (3,2,2)$. Furthermore, we have $n((3,3,1))=5$ and $n((3,2,2))=6$, which gives $T_{(3,3,1)}(q,t)=q^6t^5$. Additionally, evaluating the sum over the cells directly yields $B_{(3,3,1)}(q,t)= q^2t+q^2+qt+t^2+q+t+1$.
\begin{align*}
\ydiagram{1,3,3} \qquad \qquad \ydiagram{2,2,3} \qquad \qquad \begin{ytableau}
\scriptscriptstyle 0,2 \\
\scriptscriptstyle 0,1 & \scriptscriptstyle 1,1 & \scriptscriptstyle 2,1 \\
\scriptscriptstyle 0,0 & \scriptscriptstyle 1,0 & \scriptscriptstyle 2,0
\end{ytableau}
\end{align*}
\end{example}

Given partitions $\lambda, \mu \vdash n$, a \emph{semistandard Young tableau} (SSYT) of \emph{shape} $\lambda$ and \emph{weight} $\mu$ is defined as a filling of the Ferrers diagram of $\lambda$ using elements from the multiset $\{1^{\mu_1}, 2^{\mu_2}, \dots\}$ such that the entries weakly increase across each row and strictly increase down each column. Let $\mathrm{SSYT}(\lambda, \mu)$ denote the set of these fillings, and let $K_{\lambda, \mu}$ be its cardinality. The integer $K_{\lambda, \mu}$ is known as the \emph{Kostka number}.

Among the various combinatorial statistics on SSYTs, the \emph{cocharge} statistic, a modification of the \emph{charge} statistic first introduced in \cite{MR472993}, is central to our study. We follow the notations in \cite{MR4775674}.

\begin{definition}
Given $T\in \mathrm{SSYT}(\lambda,\mu)$, the \emph{reading word} $\sigma(T)$ is obtained by reading the entries row by row from top to bottom, and from left to right within each row. The first \emph{cocharge subword} is obtained by searching the reading word from right to left for a $1$, then continuing the search from that position to locate a $2$ (cyclically wrapping around the word as needed), and proceeding in this manner up to the maximal entry of the word. The \emph{cocharge labeling} of a permutation is determined by performing a similar right-to-left cyclic search. Processing the entries $1, 2, \dots, n$ in increasing order, we assign a label of $0$ to the entry $1$. The label assigned to $i+1$ is incremented if and only if $i+1$ appears to the left of $i$ in the permutation; otherwise, it remains the same. We then similarly find and label the second cocharge subword among the remaining unlabeled letters. This process is repeated iteratively until all letters have been assigned a label.
The \emph{cocharge} of $T$ is the sum of the cocharge labels of its reading word.
\end{definition}

\begin{example}
In the following example, we have $T\in\mathrm{SSYT}_{(3,3,1),(3,2,1,1)}$ and $\sigma(T)=3224111$. The three cocharge subwords are $3_{2}2_{1}4_{2}1_{0}$, $2_{1}1_{0}$, and $1_0$, where the subscript denotes the corresponding cocharge label. Thus we have $\mathrm{cocharge}(T)=2+1+2+1=6$.
\begin{align*}
\begin{ytableau}
3 \\
2 & 2 & 4\\
1 & 1 & 1
\end{ytableau}   
\end{align*}
\end{example}

The \emph{modified Kostka--Foulkes polynomial} is defined as
\begin{align}\label{eq-kostka-foulkes}
\tilde{K}_{\lambda,\mu}(t):=\sum_{T\in \mathrm{SSYT}(\lambda,\mu)}t^{\mathrm{cocharge}(T)}.
\end{align}
For an extensive survey on Kostka--Foulkes polynomials, we refer the reader to \cite{MR1865038}.

\subsection{Symmetric functions}

Let $\Lambda$ denote the ring of symmetric functions over a base field $\mathbb{F}$. Algebraically, $\Lambda$ can be viewed as the polynomial ring freely generated by the power sum symmetric functions $p_1, p_2, \dots$, so that $\Lambda = \mathbb{F}[p_1, p_2, \dots]$. It is equipped with a natural grading $\Lambda = \bigoplus_{n \geq 0} \Lambda^{(n)}$, where $\Lambda^{(n)}$ consists of homogeneous symmetric functions of degree $n$, defined by assigning $\deg(p_k) = k$. In the context of Macdonald polynomials, we will work over the field of rational functions $\mathbb{Q}(q,t)$. Additionally, we will adopt the notation $F[X] := F(x_1, x_2, \dots)$ to denote a symmetric function evaluated on the variable set $X$.

The monomial $\{m_{\lambda}\}_{\lambda\vdash n}$, elementary $\{e_{\lambda}\}_{\lambda\vdash n}$, complete homogeneous $\{h_{\lambda}\}_{\lambda\vdash n}$, power sum $\{p_{\lambda}\}_{\lambda\vdash n}$, and Schur $\{s_{\lambda}\}_{\lambda\vdash n}$ functions constitute five classical bases for the space $\Lambda^{(n)}$. 

Schur functions play a central role in the theory of symmetric functions. Recall that the combinatorial definition of the Schur function is given by
\begin{align*}
s_{\lambda}:=\sum_{T\in \mathrm{SSYT}(\lambda)}x^{T}=\sum_{\mu\vdash n}K_{\lambda,\mu}m_{\mu}.
\end{align*}
The fundamental \emph{involution} $\omega:\Lambda \rightarrow \Lambda$ acts on the Schur basis via
\begin{align}
\omega(s_{\lambda})=s_{\lambda'}.
\end{align}
Furthermore, the Schur functions form an orthonormal basis with respect to the \emph{Hall scalar product}, satisfying $\langle s_{\lambda},s_{\mu}\rangle=\delta_{\lambda,\mu}$, where $\delta_{\lambda,\mu}$ denotes the Kronecker delta. In our context, a symmetric function $f$ is said to be \emph{Schur positive} if its expansion in the Schur basis yields coefficients in $\mathbb{N}[q,t]$.

Next, we consider two important extensions of the Schur functions: the Hall--Littlewood and Macdonald polynomials, which serve as one-parameter ($t$) and two-parameter ($q,t$) generalizations, respectively. For $\mu\vdash n$, we denote by
\begin{align*}
\tilde{H}_{\mu}[X;q,t]=\sum\limits_{\lambda\vdash n}\tilde{K}_{\lambda,\mu}(q,t)s_{\lambda},
\end{align*}
the \emph{modified Macdonald polynomial}, where the coefficients
\begin{align}\label{eq-qtKostka}
\tilde{K}_{\lambda,\mu}(q,t)=t^{n(\mu)}K_{\lambda,\mu}(q,t^{-1}),
\end{align}
are the \emph{modified $q,t$-Kostka polynomials}. The family $\{\tilde{H}_{\mu}[X;q,t]\}_{\mu\vdash n}$ also forms a basis for $\Lambda^{(n)}$. 

Let $\nabla$ be the linear operator on symmetric functions which satisfies
\begin{align*}
\nabla \tilde{H}_{\mu}[X;q,t]=T_{\mu}\tilde{H}_{\mu}[X;q,t].    
\end{align*}
This remarkable operator has played an important role in the theory of diagonal harmonics over the past three decades.

While an explicit closed formula for the general modified $q,t$-Kostka polynomials remains elusive, specializing at $q=0$ recovers the modified Kostka--Foulkes polynomials in the parameter $t$, whose combinatorial definition is provided in \eqref{eq-kostka-foulkes}. Correspondingly, this specialization yields the \emph{modified Hall--Littlewood polynomials}
\begin{align*}
\tilde{H}_{\mu}[X;0,t]=\sum_{\lambda\vdash n}\tilde{K}_{\lambda,\mu}(t)s_{\lambda}.
\end{align*}

We close this subsection with a few symmetric function identities required for the proof. First, it is a standard fact that the modified Macdonald polynomial indexed by a single column is independent of $q$.
\begin{lemma}
We have
\begin{align}\label{eq-Ht1^n}
\tilde{H}_{1^n}[X;q,t]=\tilde{H}_{1^n}[X;0,t]=(t;t)_{n}h_{n}\left[\frac{X}{1-t}\right].
\end{align}
\end{lemma}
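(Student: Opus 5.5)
The plan is to prove the two equalities in \eqref{eq-Ht1^n} separately. The first, $\tilde{H}_{1^n}[X;q,t]=\tilde{H}_{1^n}[X;0,t]$, follows once we show that $\tilde{H}_{1^n}$ does not depend on $q$. The second then reduces to evaluating the Hall--Littlewood side, either combinatorially via cocharge or via the triangularity characterization of $\tilde{H}_\mu$.

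For the $q$-independence, I will use the standard characterization of modified Macdonald polynomials: $\tilde{H}_\mu$ is the unique symmetric function with
\begin{align*}
\tilde{H}_\mu[X(1-q);q,t]&\in \mathbb{Q}(q,t)\{s_\lambda:\lambda\unrhd\mu\},\\
\tilde{H}_\mu[X(1-t);q,t]&\in \mathbb{Q}(q,t)\{s_\lambda:\lambda\unrhd\mu'\},
\end{align*}
normalized by $\langle \tilde{H}_\mu,s_{(n)}\rangle=1$. For $\mu=1^n$ the first condition is vacuous, since every $\lambda\vdash n$ dominates $1^n$. The second condition forces $\tilde{H}_{1^n}[X(1-t)]$ to be a multiple of $s_{(n)}=h_n$. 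Hence
\begin{align*}
\tilde{H}_{1^n}[X;q,t]=c\,h_n\left[\frac{X}{1-t}\right].
\end{align*}
The normalization then determines $c$. Since
\begin{align*}
h_n\left[\frac{X}{1-t}\right]=\sum_\lambda s_\lambda[X]\,s_\lambda\left[\frac{1}{1-t}\right],
\end{align*}
the coefficient of $s_{(n)}$ is $h_n[1/(1-t)]=1/(t;t)_n$. This gives $c=(t;t)_n$, so $\tilde{H}_{1^n}=(t;t)_n h_n[X/(1-t)]$. This expression is manifestly free of $q$, so it also equals its specialization at $q=0$, which is the modified Hall--Littlewood polynomial.

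As a cross-check consistent with the paper's cocharge definition, I will also verify the Schur expansion directly. The coefficient of $s_\lambda$ in $(t;t)_n h_n[X/(1-t)]$ is $(t;t)_n s_\lambda[1/(1-t)]=(t;t)_n\, t^{n(\lambda)}/\prod_{c}(1-t^{h(c)})$, which is the $t$-hook length formula $t^{n(\lambda)}[n]_t!/\prod_c[h(c)]_t$. For weight $1^n$, $\tilde{K}_{\lambda,1^n}(t)$ is the cocharge generating function of standard Young tableaux of shape $\lambda$. The classical result (Lusztig--Stanley) identifies this generating function, up to the symmetry $t^{\binom n2}\cdot(\,\cdot\,)|_{t\to t^{-1}}$ relating charge and cocharge, with the $t$-hook formula, so the two expansions agree.

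The main obstacle is purely one of bookkeeping conventions. One must make sure that the paper's modification $\tilde{K}_{\lambda,\mu}(q,t)=t^{n(\mu)}K_{\lambda,\mu}(q,t^{-1})$ aligns with the triangularity conventions quoted above, in particular which plethystic factor pairs with $\mu$ versus $\mu'$. If there is a mismatch, I would instead derive the result from $J_{1^n}$ at $q$ arbitrary, which equals the Hall--Littlewood $Q'$ and is known to be $q$-free. I would then apply the plethystic definition $\tilde{H}_\mu=t^{n(\mu)}J_\mu[X/(1-t^{-1});q,t^{-1}]$. With $n(1^n)=\binom n2$, this produces the $(t;t)_n$ factor after simplifying $\prod_{i=1}^n(1-t^{-i})t^{\binom n2}$ up to sign and normalization.
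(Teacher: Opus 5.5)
Your proof is correct. The paper gives no proof of this lemma: it quotes $q$-independence of the one-column $\tilde{H}_{1^n}$ as a standard fact. So the comparison is with the standard derivations rather than with an argument in the text.

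Your main route uses the triangularity characterization of $\tilde{H}_\mu$, and each step checks out:
\begin{itemize}
\item For $\mu=1^n$ the $(1-q)$-condition is vacuous.
\item The $(1-t)$-condition pairs with $\mu'=(n)$, which forces $\tilde{H}_{1^n}=c\,h_n[X/(1-t)]$.
\item The normalization $\langle\tilde{H}_{1^n},s_{(n)}\rangle=1$, together with $h_n[1/(1-t)]=1/(t;t)_n$, gives $c=(t;t)_n$.
\end{itemize}
You only use that $\tilde{H}_{1^n}$ satisfies these conditions, not the uniqueness part of the characterization.

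The convention you flag as the main obstacle is not actually a problem. Pairing $(1-t)$ with $\mu'$ is the right choice for the normalization $\tilde{K}_{\lambda,\mu}(q,t)=t^{n(\mu)}K_{\lambda,\mu}(q,t^{-1})$ used in the paper. Your fallback computation confirms this directly, since it yields $\tilde{H}_{1^n}[X(1-t)]=(t;t)_n h_n$.

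The fallback is also the more elementary proof. From $P_{1^n}=e_n$ and $c_{1^n}(q,t)=\prod_{i=1}^n(1-t^i)$ you get $J_{1^n}=(t;t)_n e_n$, independent of $q$. The identities $(t^{-1};t^{-1})_n=(-1)^n t^{-\binom{n+1}{2}}(t;t)_n$ and $e_n[-tX/(1-t)]=(-1)^n t^n h_n[X/(1-t)]$ then give
\[
t^{\binom{n}{2}}J_{1^n}[X/(1-t^{-1});q,t^{-1}]=(t;t)_n h_n[X/(1-t)],
\]
since the power of $t$ is $\binom{n}{2}-\binom{n+1}{2}+n=0$. This route uses only the definitions. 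Your main route instead rests on Haiman's triangularity theorem, which is itself derived from Macdonald's triangularity.

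One small correction: $J_{1^n}$ coincides with the Hall--Littlewood $Q_{1^n}(X;t)=(t;t)_n e_n$, not with $Q'_{1^n}$.

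The hook-length cross-check is not needed for the lemma. It is a correct consistency check against the paper's cocharge definition of $\tilde{H}_\mu[X;0,t]$, since $\tilde{K}_{\lambda,1^n}(t)=t^{n(\lambda)}[n]_t!/\prod_{c}[h(c)]_t$.
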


The symmetry relation for modified $(q,t)$-Kostka polynomials, which was originally explored by Macdonald in the classical setting \cite{MR1354144} (cf. \cite{MR2371044}*{Section 2}), states that
\begin{align*}
\tilde{K}_{\lambda,\mu}(q,t)=q^{n(\mu')}t^{n(\mu)}\tilde{K}_{\lambda',\mu}(q^{-1},t^{-1}),
\end{align*}
we obtain the following well-known property for modified Macdonald polynomials.

\begin{lemma}
We have
\begin{align}\label{equ-HtwHt}
\tilde{H}_{\mu}[X;q,t]=q^{n(\mu')}t^{n(\mu)}\omega\tilde{H}_{\mu}[X;q^{-1},t^{-1}].
\end{align}
\end{lemma}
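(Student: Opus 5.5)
The plan is to deduce \eqref{equ-HtwHt} from the symmetry relation for the modified $(q,t)$-Kostka polynomials stated just above, by expanding both sides in the Schur basis and applying $\omega$ termwise. No further structure of Macdonald polynomials should be needed; everything reduces to comparing Schur coefficients.

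First expand the left-hand side using the definition,
\begin{align*}
\tilde{H}_{\mu}[X;q,t]=\sum_{\lambda\vdash n}\tilde{K}_{\lambda,\mu}(q,t)s_{\lambda}.
\end{align*}
Next substitute $q\mapsto q^{-1}$ and $t\mapsto t^{-1}$ in the same definition. Since $\omega$ is linear over $\mathbb{Q}(q,t)$ and satisfies $\omega(s_\lambda)=s_{\lambda'}$, this gives
\begin{align*}
\omega\tilde{H}_{\mu}[X;q^{-1},t^{-1}]=\sum_{\lambda\vdash n}\tilde{K}_{\lambda,\mu}(q^{-1},t^{-1})s_{\lambda'}=\sum_{\lambda\vdash n}\tilde{K}_{\lambda',\mu}(q^{-1},t^{-1})s_{\lambda}.
\end{align*}
The second equality reindexes the sum by $\lambda\mapsto\lambda'$, which is a bijection on partitions of $n$.

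Then multiply by $q^{n(\mu')}t^{n(\mu)}$. By the symmetry relation, the coefficient of $s_\lambda$ becomes exactly $\tilde{K}_{\lambda,\mu}(q,t)$. Since the Schur functions form a basis, the two sides of \eqref{equ-HtwHt} agree.

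The only point needing care is that $\omega$ acts on the variables $X$ and not on the parameters $q,t$. This is what allows the substitution $(q,t)\mapsto(q^{-1},t^{-1})$ to commute with $\omega$. Beyond that, there is no real obstacle, since the substance of the result is the symmetry relation itself, which is quoted from \cite{MR1354144}.
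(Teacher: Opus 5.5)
Your proposal is correct and takes the same approach as the paper. The paper deduces this identity directly from the quoted symmetry $\tilde{K}_{\lambda,\mu}(q,t)=q^{n(\mu')}t^{n(\mu)}\tilde{K}_{\lambda',\mu}(q^{-1},t^{-1})$, and your Schur-coefficient comparison, using the reindexing $\lambda\mapsto\lambda'$ and the $\mathbb{Q}(q,t)$-linearity of $\omega$, is exactly the computation the paper leaves implicit.
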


The following symmetry property is implicitly contained in \cite{MR1701592}*{[Theorem I.1]} and appears explicitly in the proof of Theorem 4.1 in \cite{MR4547238}.

\begin{lemma}\label{lem-Hlambda=Hmu}
For any two partitions $\lambda$ and $\mu$ of $n$, and any integers $r$ and $s$, 
\begin{align*}
B_{\lambda}(t^r,t^s) = B_{\mu}(t^r,t^s) \implies \tilde{H}_{\lambda}[X;t^r,t^s]=\tilde{H}_{\mu}[X;t^r,t^s].
\end{align*}
\end{lemma}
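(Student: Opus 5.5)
The plan is to deduce the lemma from the Macdonald--Koornwinder reciprocity of Garsia--Haiman--Tesler, which is the content of \cite{MR1701592}*{Theorem I.1}. Write $M=(1-q)(1-t)$, $D_\nu=MB_\nu(q,t)-1$ and $\Pi_\nu(u)=\prod_{c\in\nu}(1-u\,q^{\mathrm{coarm}_\nu(c)}t^{\mathrm{coleg}_\nu(c)})$. For partitions $\mu$ and $\nu$, reciprocity states
\begin{align*}
\frac{\tilde{H}_{\mu}[1+uD_\nu;q,t]}{\Pi_\mu(u)}=\frac{\tilde{H}_{\nu}[1+uD_\mu;q,t]}{\Pi_\nu(u)}.
\end{align*}

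First I observe that both $D_\mu$ and $\Pi_\mu(u)$ depend only on the multiset of cell monomials recorded by $B_\mu$. For $D_\mu$ this is immediate. For $\Pi_\mu(u)$, expanding the product gives $\Pi_\mu(u)=\sum_k(-u)^k e_k[B_\mu]$. Both are plethystic expressions in which $q,t$ enter only through the alphabet $B_\mu$ and through $M$. Hence after the substitution $q=t^r$, $t=t^s$ they depend only on the Laurent polynomial $B_\mu(t^r,t^s)$, which is an alphabet of monomials in $t$.

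Second, assume $B_\lambda(t^r,t^s)=B_\mu(t^r,t^s)$. I would clear denominators, writing reciprocity as $\tilde{H}_\mu[1+uD_\nu]\,\Pi_\nu(u)=\tilde{H}_\nu[1+uD_\mu]\,\Pi_\mu(u)$. This is a polynomial identity in $q,t,u$, so it survives the specialization even in degenerate cases such as $r=0$ or $s=0$, where $M$ vanishes. Applying it to $\lambda$ and to $\mu$ and using the first step, I get
\begin{align*}
\tilde{H}_{\lambda}[1+uD_\nu;t^r,t^s]=\tilde{H}_{\mu}[1+uD_\nu;t^r,t^s]
\end{align*}
for every partition $\nu$ of any size and for all $u$. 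In words, the specialized difference $f=\tilde{H}_\lambda-\tilde{H}_\mu$, which is a homogeneous symmetric function of degree $n$ with coefficients in $\mathbb{Q}[t^{\pm1}]$, vanishes on every alphabet $1+uD_\nu$.

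The last step, and the main obstacle, is to show that these evaluations force $f=0$. My first attempt is a density argument. Write $f$ in the power-sum basis, so that $f[1+uD_\nu]=\sum_\rho c_\rho\prod_i p_{\rho_i}[1+uD_\nu]$. Each factor $p_k[1+uD_\nu]=1+u^k\bigl(M_kB_\nu(t^{rk},t^{sk})-1\bigr)$ involves $p_k[B_\nu]$ evaluated at the specialization, where $M_k=(1-t^{rk})(1-t^{sk})$. Since $u$ is free, comparing powers of $u$ separates the contributions of different multisets of parts. Letting $\nu$ range over partitions with many rows and columns of large, independent lengths, the quantities $p_k[B_\nu]$ behave like values of independent variables, namely geometric sums in $t^{r}$ and $t^{s}$ with free exponents. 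This should yield a Zariski-dense family of alphabets on which $f$ vanishes, and hence $f=0$.

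If the density argument turns out to be fragile, for instance when $r=s$ or $rs=0$ and the monomials collide, the fallback is to argue at the level of modules or of the symmetric-function expression in \cite{MR1701592}. That expression writes $\tilde{H}_\mu$ as an operator formula in which $\mu$ enters only through $B_\mu$ and $\Pi_\mu$. Specializing it would give the claim directly, in the spirit of the proof of Theorem 4.1 in \cite{MR4547238}.
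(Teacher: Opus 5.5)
Your reduction is sound. Reciprocity, cleared of denominators, is a polynomial identity that specializes to any $(t^r,t^s)$. Since $D_\mu$ and $\Pi_\mu(u)$ depend only on the multiset of cell weights, you correctly get $f[1+uD_\nu]=0$ for every $\nu$, where $f=\tilde{H}_\lambda[X;t^r,t^s]-\tilde{H}_\mu[X;t^r,t^s]$. (The paper gives no argument here; it only cites Garsia--Haiman--Tesler and the proof of Theorem 4.1 in its reference.)

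The gap is the final step, which carries all the content and is only asserted. As written it is actually false when $rs=0$. Then $M$ specializes to $0$, so every $D_\nu$ becomes the alphabet $-1$, and your conditions collapse to the single identity $f[1-u]=0$. Since $s_{(n-k,1^k)}[1-u]=(-u)^k(1-u)$ and $s_\lambda[1-u]=0$ for non-hooks, this only kills the hook coefficients of $f$. Your ``fallback'' is just a pointer back to the literature. These cases need their own (easy) argument. For $r=0\neq s$ one has $B_\mu(1,t^s)=\sum_i\mu_i t^{s(i-1)}$, which determines $\mu$; the case $s=0\neq r$ is symmetric using $\mu'$. For $r=s=0$, every $\tilde{H}_\mu[X;1,1]=e_1^n$.

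For $rs\neq 0$, ``the $p_k[B_\nu]$ behave like independent variables'' is a heuristic, and comparing powers of $u$ does not separate the $p_\rho$-contributions. Here is one way to make it rigorous.

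\emph{Step 1 (corner alphabet).} Take the coefficient of $u^n$, namely $f[D_\nu]=0$. Use $D_\nu=qt\sum_{c\ \mathrm{removable}}w_c-\sum_{c\ \mathrm{addable}}w_c$, where $w_c=q^{\mathrm{coarm}(c)}t^{\mathrm{coleg}(c)}$. Let the removable corners of $\nu$ sit at $(\mathrm{coarm},\mathrm{coleg})=(X_i-1,Y_i-1)$ with $X_1>\cdots>X_m\geq 1$ and $1\leq Y_1<\cdots<Y_m$, and set $X_{m+1}=Y_0=0$. At the specialization, $D_\nu=Y-X$, with positive letters $t^{e_i}$, $e_i=rX_i+sY_i$, and negative letters $t^{g_j}$, $g_j=rX_{j+1}+sY_j$, for $0\leq j\leq m$.

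\emph{Step 2 (distinct linear forms).} The corner data range over all lattice points of a full-dimensional cone. The linear map to $(e,g)$ is injective when $rs\neq 0$, and its image spans exactly the hyperplane $\sum_i e_i=\sum_j g_j$. Hence two monomials in independent letters $y_1,\dots,y_m,x_0,\dots,x_m$ induce the same linear form in the corner data only if their exponent vectors differ by a multiple of the vector with $m$ entries $1$ and $m+1$ entries $-1$. That is impossible for two monomials of the same degree $n$.

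\emph{Step 3 (separation and conclusion).} Evaluate along $Nw_0$ for a generic admissible $w_0$ and let $N\to\infty$. This separates all monomials of $f[Y-X]$, whose coefficients are Laurent polynomials in $t$ of bounded degree span. So $f[Y-X]=0$ as a polynomial in independent letters. Setting all $x_j=0$ and choosing $m\geq n$ gives $f=0$.

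In particular, the case $r=s$ you worried about is harmless; only $rs=0$ requires the separate treatment above.
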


\subsection{\texorpdfstring{$t$}{t}-Pochhammer symbol}
We now introduce the basic notation for $q$-series. For consistency with our subsequent formulation, we adopt $t$ as the primary parameter instead of the conventional $q$.

For integers $n\geq 0$ and indeterminates $a$ and $t$, we define the \emph{$t$-shifted factorial} by
\begin{align*}
(a;t)_n:=(1-a)(1-at)\cdots(1-at^{n-1}).
\end{align*}
The \emph{$t$-binomial coefficient} is then defined for $0 \leq k \leq n$ as
\begin{align}\label{equ-t-binomial}
\begin{bmatrix}
n\\k
\end{bmatrix}_t:=\frac{(t;t)_{n}}{(t;t)_k(t;t)_{n-k}}.
\end{align}
By convention, we set $\tbinom{n}{k}:=0$ if $k<0$ or $k>n$. Although not immediately apparent from its algebraic definition, we have the following well-known combinatorial interpretation.

\begin{lemma}
For any integers $n \ge k \ge 0$, we have
\begin{equation}
\tbinom{n}{k}=\sum_{\lambda\subseteq (n-k)^{k}}t^{|\lambda|}.
\end{equation}
Consequently, it is a polynomial in $\mathbb{N}[t]$.
\end{lemma}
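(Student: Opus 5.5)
The plan is a bijection between the terms on the two sides, which needs no algebra. The right-hand side counts partitions $\lambda$ whose Young diagram fits inside the $k\times(n-k)$ rectangle $(n-k)^k$, weighted by $t^{|\lambda|}$. The first step is to encode each such $\lambda$ as a lattice path. I will trace the boundary of $\lambda$ inside the rectangle from one corner to the opposite corner. This gives a path of $n$ unit steps, $k$ of one kind and $n-k$ of the other. Equivalently, it gives a binary word $w\in\{0,1\}^n$ with exactly $k$ ones. With suitable conventions, $|\lambda|$ equals the number of inversions of $w$, meaning the pairs $i<j$ with $w_i=1$ and $w_j=0$. So it suffices to show that
\begin{align*}
\sum_{w}t^{\mathrm{inv}(w)}=\tbinom{n}{k},
\end{align*}
where the sum runs over binary words of length $n$ with $k$ ones.

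For this I will use induction on $n$ via the $t$-Pascal recurrence
\begin{align*}
\tbinom{n}{k}=\tbinom{n-1}{k-1}+t^{k}\tbinom{n-1}{k}.
\end{align*}
This recurrence comes from the definition \eqref{equ-t-binomial} by a one-line computation. One puts both terms over the common denominator $(t;t)_k(t;t)_{n-k}$ and uses $(1-t^{k})+t^{k}(1-t^{n-k})=1-t^{n}$.

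The inversion generating function satisfies the same recurrence, by conditioning on the last letter of $w$. If the last letter is $1$, deleting it removes no inversions, and the remaining word has length $n-1$ with $k-1$ ones. If the last letter is $0$, it forms an inversion with each of the $k$ ones before it. Deleting it therefore contributes the factor $t^{k}$, and the remaining word has length $n-1$ with $k$ ones. The boundary cases are $k=0$ and $k=n$. Both sides equal $1$ there, because only one word exists and it has no inversions.

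One could instead run the same recurrence directly on partitions by splitting on whether $\lambda$ has exactly $k$ parts. I prefer the word formulation because the inversion bookkeeping is transparent. Polynomiality with coefficients in $\mathbb{N}$ then follows at once, since the right-hand side is a finite sum of monomials.

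The only delicate step is fixing the conventions: the orientation of the rectangle and the reading direction of the path must be chosen so that $|\lambda|$ counts inversions rather than non-inversions. Even if the opposite convention slips in, the count is still correct. The complement statistic is $k(n-k)-|\lambda|$, and the sum of $t^{k(n-k)-|\lambda|}$ equals the sum of $t^{|\lambda|}$, because taking the complement of $\lambda$ inside the rectangle is an involution.
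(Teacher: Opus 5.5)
Your proof is correct. The paper gives no proof of this lemma: it quotes it as a ``well-known combinatorial interpretation'' of the $t$-binomial coefficient, so there is no competing argument to compare against. Your route is the standard one, and each step checks out. The identity $(1-t^{k})+t^{k}(1-t^{n-k})=1-t^{n}$ gives $\tbinom{n}{k}=\tbinom{n-1}{k-1}+t^{k}\tbinom{n-1}{k}$ for $1\le k\le n-1$. Splitting on the last letter gives the same recurrence for $\sum_{w}t^{\mathrm{inv}(w)}$, and the boundary values $k=0$ and $k=n$ agree. Induction on $n$ keeps every index in the valid range $0\le k\le n$. Your complementation remark correctly handles the orientation issue. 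If you want a shorter write-up, the direct version you mention already suffices and removes the word encoding and the convention caveat. Partitions in $(n-k)^{k}$ with fewer than $k$ parts are exactly the partitions in $(n-k)^{k-1}$, which give $\tbinom{n-1}{k-1}$. Those with exactly $k$ parts lose their first column (weight $t^{k}$) and become arbitrary partitions in $(n-k-1)^{k}$, which give $t^{k}\tbinom{n-1}{k}$.
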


To facilitate the algebraic manipulations in the subsequent proofs, we recall several elementary identities involving ordinary binomial coefficients and $t$-shifted factorials. Their proofs are straightforward and thus omitted.

\begin{lemma}\label{lem-A+-B}
For any two integers $a$ and $b$, we have
\begin{align*}
\binom{a+b}{2}=\binom{a}{2}+\binom{b}{2}+ab \quad  \text{ and } \quad \binom{a-b}{2}=\binom{a}{2}+\binom{b+1}{2}-ab.
\end{align*}
\end{lemma}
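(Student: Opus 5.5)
The two identities of Lemma~\ref{lem-A+-B} are elementary, and I would verify each by expanding both sides as polynomials in $a$ and $b$, using $\binom{m}{2}=\frac{m(m-1)}{2}$. This formula makes sense for every integer $m$, including negative ones. Since both sides of each identity are then polynomial expressions, there is no need for case distinctions on the signs of $a$, $b$, or $a-b$.

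For the first identity I would compute
\begin{align*}
\binom{a+b}{2}=\frac{(a+b)(a+b-1)}{2}=\frac{a^2-a}{2}+\frac{b^2-b}{2}+\frac{2ab}{2},
\end{align*}
which is exactly $\binom{a}{2}+\binom{b}{2}+ab$.

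For the second identity, rather than expanding again, I would reduce it to the first. Substituting $b\mapsto -b$ in the first identity gives
\begin{align*}
\binom{a-b}{2}=\binom{a}{2}+\binom{-b}{2}-ab.
\end{align*}
It then suffices to note that
\begin{align*}
\binom{-b}{2}=\frac{(-b)(-b-1)}{2}=\frac{b(b+1)}{2}=\binom{b+1}{2}.
\end{align*}

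There is no real obstacle here. The only point needing care is to state explicitly that $\binom{m}{2}$ is defined by the polynomial formula for negative $m$, so that the substitution $b\mapsto -b$ is legitimate. This is the convention under which the lemma is used later, for example with exponents like $\binom{a-j}{2}$.
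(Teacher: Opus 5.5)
Your proof is correct and is the routine polynomial verification that the paper has in mind; the paper omits the proof as straightforward. Deriving the second identity from the first via $b\mapsto -b$ and $\binom{-b}{2}=\binom{b+1}{2}$ is a clean shortcut. You are also right to make the polynomial convention $\binom{m}{2}=m(m-1)/2$ explicit: the lemma is stated for all integers, and under the ``zero for negative $m$'' convention it would fail, for example at $a=1$, $b=-2$.
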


\begin{lemma}
For any nonnegative integers $n, m, N$ and indeterminates $x, t$, we have:

\medskip
\noindent\emph{Inversion formula:}
\begin{align}\label{eq-inversion}
(x;t)_n=(-1)^n x^n t^{\binom{n}{2}}(x^{-1}t^{1-n};t)_n.
\end{align}
\noindent\emph{Splitting formula:}
\begin{align}\label{eq-splitting}
    (x;t)_{n+m}=(x;t)_n(xt^n;t)_m.
\end{align}
\noindent\emph{Quotient formulas:} For $N\ge n\ge 0$ and integer $A\ge 1$,
\begin{align}
    (t^{-N};t)_n&=(-1)^n t^{-Nn+\binom{n}{2}}\frac{(t;t)_N}{(t;t)_{N-n}}, \label{eq-quotient-neg} \\
    (t^A;t)_n&=\frac{(t;t)_{A+n-1}}{(t;t)_{A-1}}. \label{eq-quotient-pos}
\end{align}
\noindent\emph{Punctured product:} For $0 \le k \le n-1$,
\begin{align}\label{eq-punctured}
    \prod_{j=0,\,j\neq k}^{n-1}(1-xt^j) = (x;t)_k (xt^{k+1};t)_{n-1-k}.
\end{align}
\end{lemma}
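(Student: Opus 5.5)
These are elementary identities, and I would verify each one directly from the definition $(x;t)_n=\prod_{i=0}^{n-1}(1-xt^i)$ by rearranging the finite product. I would prove the splitting formula \eqref{eq-splitting} first, because the later ones reduce to it. The product $\prod_{i=0}^{n+m-1}(1-xt^i)$ breaks into the factors with $i<n$, which give $(x;t)_n$, and those with $n\le i\le n+m-1$. Writing $i=n+j$ with $0\le j\le m-1$, the second block is $\prod_{j=0}^{m-1}(1-xt^{n}t^j)=(xt^n;t)_m$. The punctured product \eqref{eq-punctured} is handled the same way. Delete the factor $j=k$, keep the block $0\le j\le k-1$, which is $(x;t)_k$, and reindex the block $k+1\le j\le n-1$ as $j=k+1+l$ with $0\le l\le n-2-k$. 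That block is then $(xt^{k+1};t)_{n-1-k}$.

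For the inversion formula \eqref{eq-inversion}, I would factor each term as $1-xt^i=-xt^i(1-x^{-1}t^{-i})$. Taking the product over $0\le i\le n-1$ gives the prefactor $(-1)^nx^nt^{\binom n2}$, since $\sum_{i=0}^{n-1} i=\binom n2$. What remains is $\prod_{i=0}^{n-1}(1-x^{-1}t^{-i})$. Reversing the index via $i\mapsto n-1-i$ turns the exponents into $1-n,\dots,0$, so this product equals $(x^{-1}t^{1-n};t)_n$.

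The quotient formulas follow from these two. For \eqref{eq-quotient-pos}, apply splitting to $(t;t)_{A+n-1}=(t;t)_{A-1}(t^{A};t)_n$ and divide by $(t;t)_{A-1}$. For \eqref{eq-quotient-neg}, apply inversion with $x=t^{-N}$, which gives $(t^{-N};t)_n=(-1)^nt^{-Nn+\binom n2}(t^{N-n+1};t)_n$. Then use \eqref{eq-quotient-pos} with $A=N-n+1\ge1$ to rewrite $(t^{N-n+1};t)_n$ as $(t;t)_N/(t;t)_{N-n}$. The hypothesis $N\ge n$ is exactly what guarantees $A\ge1$.

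There is no real obstacle here. The only point that needs care is the sign and exponent bookkeeping in the inversion step, namely the reindexing that yields $t^{1-n}$ rather than $t^{-n}$. I would confirm it with the case $n=1$, where both sides equal $1-x$, and the case $n=2$, where both sides equal $(1-x)(1-xt)$.
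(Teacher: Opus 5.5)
Your proof is correct: each identity follows by splitting or reindexing the defining finite product. The paper omits these proofs as straightforward, and your direct verification, deriving both quotient formulas from splitting and inversion, is exactly the routine argument it leaves to the reader.
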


Furthermore, our analysis relies on the following three well-known identities. The first is a standard exchange relation.
\begin{lemma}\label{lem-tbinomexchange}
Let $n$, $k$, and $m$ be nonnegative integers. Then we have
\begin{align}
\tbinom{n}{k}\tbinom{k}{m}=\tbinom{n}{m}\tbinom{n-m}{n-k}.
\end{align}
\end{lemma}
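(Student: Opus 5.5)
The plan is to prove the exchange relation by expanding both sides directly with the definition \eqref{equ-t-binomial}, after disposing of the degenerate cases where the convention $\tbinom{n}{k}:=0$ applies.

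First I will handle the generic case $0\le m\le k\le n$. Here all four $t$-binomial coefficients are given by the quotient formula. The left-hand side becomes
\begin{align*}
\frac{(t;t)_n}{(t;t)_k(t;t)_{n-k}}\cdot\frac{(t;t)_k}{(t;t)_m(t;t)_{k-m}}=\frac{(t;t)_n}{(t;t)_m(t;t)_{n-k}(t;t)_{k-m}}.
\end{align*}
For the right-hand side, note that $0\le n-k\le n-m$, and that $(n-m)-(n-k)=k-m$. So it equals
\begin{align*}
\frac{(t;t)_n}{(t;t)_m(t;t)_{n-m}}\cdot\frac{(t;t)_{n-m}}{(t;t)_{n-k}(t;t)_{k-m}},
\end{align*}
and after cancelling $(t;t)_{n-m}$ this is the same expression. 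Both sides are therefore equal to the $t$-multinomial coefficient of $n$ split into parts $m$, $k-m$, $n-k$.

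Next come the degenerate cases, which are the only place requiring any care. If $k>n$, the left side vanishes because of $\tbinom{n}{k}$, and the right side vanishes too: either $m>n$, or $n-k<0$. If $m>k$ with $k\le n$, the left side vanishes because of $\tbinom{k}{m}$. On the right, either $m>n$, which kills $\tbinom{n}{m}$, or $m\le n$ and $n-k>n-m$, which kills $\tbinom{n-m}{n-k}$. Since $n,k,m\ge 0$, no other case occurs.

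I do not expect any real obstacle. The main point is simply to check that the convention for out-of-range indices makes both sides vanish simultaneously. An optional alternative for the generic case is a combinatorial argument: both sides count chains of subsets $M\subseteq K\subseteq[n]$ with $|M|=m$ and $|K|=k$, weighted by a $t$-inversion statistic. I will use the algebraic proof above, since it is immediate.
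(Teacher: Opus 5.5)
Your proof is correct. The paper gives no proof of this exchange relation; it cites it as a standard identity. Your expansion via the quotient definition of the $t$-binomial coefficient, together with the check that the zero convention makes both sides vanish together outside the range $0\le m\le k\le n$, is exactly the routine verification the paper relies on. In the identity's only use in the paper (Section 5, with $n=a-j$, $k=r-j$, $m=i-j$, where $j\le i\le r\le a$), only your generic case is needed.
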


The second is a fundamental result in the theory of basic hypergeometric series.

\begin{lemma}[$t$-Chu--Vandermonde identity]\label{lem-chuvandermonde}
For any integer $n \geq 0$ and indeterminates $b$ and $c$, the following terminating basic hypergeometric identity holds:
\begin{align}
\sum_{k=0}^{n}\frac{(t^{-n};t)_k(b;t)_k}{(c;t)_k(t;t)_k}t^k= \frac{(c/b;t)_n}{(c;t)_n}b^n.
\end{align}
\end{lemma}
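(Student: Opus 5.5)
This is the classical $q$-Chu--Vandermonde summation, and I would prove it by induction on $n$ using a contiguity relation, rather than any symmetric function machinery.

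\textbf{Step 1: a recurrence in $b$.} Write $S_n(b,c)$ for the left-hand side. From $(b;t)_k = (1-b)(bt;t)_{k-1}$ I get
\[
(b;t)_k - (bt;t)_k = -b\,(1-t^k)\,(bt;t)_{k-1}.
\]
Together with $(1-t^k)/(t;t)_k = 1/(t;t)_{k-1}$ and $(t^{-n};t)_k=(1-t^{-n})(t^{1-n};t)_{k-1}$, shifting $k\mapsto k+1$ in $S_n(b,c)-S_n(bt,c)$ expresses the difference as a multiple of a sum of the same shape, of length $n-1$, with $(b,c)$ replaced by $(bt,ct)$. Explicitly,
\[
S_n(b,c)-S_n(bt,c) = -\,\frac{b\,(1-t^{-n})\,t}{1-c}\; S_{n-1}(bt,ct).
\]

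\textbf{Step 2: a second relation.} A single recurrence in $b$ alone does not close the induction, because $S_n(bt,c)$ is unknown. I therefore need a second contiguous relation linking $S_n(b,c)$ and $S_n(bt,c)$ with a different coefficient. The cleanest choice is to compare termwise $(1-bt^k)$-weighted versions: since
\[
(bt;t)_k = (b;t)_k\,\frac{1-bt^k}{1-b},
\]
one has
\[
(1-b)\,S_n(bt,c) = \sum_k \text{term}_k\,(1-bt^k).
\]
This leaves me needing the $t^k$-weighted sum, which in turn equals $S_n$ with the extra factor $t^k$. Balancing the two relations is the main obstacle and is messy.

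\textbf{Step 3 (preferred route): a generating-function argument.} Since both sides are rational in $b$ and $c$, after clearing $(c;t)_n$ both sides are polynomials in $b$ of degree at most $n$. It therefore suffices to verify the identity at the $n+1$ points $b=t^{-m}$, $m=0,\dots,n$.
\begin{itemize}
\item At $b=t^{-m}$ the left-hand side terminates at $k=m$.
\item Reversing the order of summation, $k\mapsto m-k$, and applying the inversion formula \eqref{eq-inversion} converts the sum into the $q$-Vandermonde sum in the form
\[
\sum_k \tbinom{n}{k}\tbinom{N}{m-k}t^{\dots}.
\]
\item That form can be checked directly from the product
\[
(x;t)_{n}(xt^n;t)_{N} = (x;t)_{n+N},
\]
via the splitting formula \eqref{eq-splitting} and the $t$-binomial theorem. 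Matching coefficients of $x^m$ gives the result, with the quotient formulas \eqref{eq-quotient-neg} and \eqref{eq-quotient-pos} used to convert Pochhammer symbols to $t$-binomials.
\end{itemize}

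The routine but error-prone part is tracking the powers of $t$ through the inversion formula. The only real obstacle is making sure the polynomial-degree argument is legitimate. Multiplying the left-hand side by $(c;t)_n$ gives
\[
\sum_k (t^{-n};t)_k\,(b;t)_k\,(ct^k;t)_{n-k}\,t^k/(t;t)_k,
\]
which is a polynomial in $b$ of degree at most $n$. The right-hand side becomes $(c/b;t)_n\,b^n$, which is also a polynomial in $b$ of degree at most $n$. Hence agreement at $n+1$ distinct points suffices.
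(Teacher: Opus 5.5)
The paper does not prove this lemma; it quotes it as the standard $q$-Chu--Vandermonde summation, so your argument has to stand on its own. Steps 1 and 2 are abandoned: the Step 1 recurrence is correct but, as you note, does not close an induction. Everything therefore rests on Step 3, and its key reduction fails. Interpolating in $b$ alone leaves you to check, for $0\le m\le n$,
\[
\sum_{k=0}^{m}\frac{(t^{-n};t)_k\,(t^{-m};t)_k}{(c;t)_k\,(t;t)_k}\,t^k=\frac{(ct^m;t)_n}{(c;t)_n}\,t^{-mn},
\]
and here $c$ is still a free indeterminate. This is a terminating ${}_2\phi_1$ of exactly the same kind as the original, now symmetric in $m$ and $n$, so nothing has been reduced. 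Neither reversing $k\mapsto m-k$ nor \eqref{eq-inversion} can turn $1/(c;t)_k$ (or $1/(c;t)_{m-k}$) into a $t$-binomial coefficient. The integer $N$ in your convolution $\sum_k\tbinom{n}{k}\tbinom{N}{m-k}t^{\dots}$ has no source, so the claimed conversion is false as stated.

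The repair is a second interpolation, in $c$. After multiplying by $(c;t)_n$, both sides are also polynomials in $c$ of degree at most $n$. The $k$-th summand is a $c$-free factor times $(ct^k;t)_{n-k}$, and the right side is $\prod_{i=0}^{n-1}(b-ct^i)$. It therefore suffices to check the grid $b=t^{-m}$, $c=t^{M+1}$ with $0\le m,M\le n$, where $(c;t)_n\neq 0$. There, \eqref{eq-quotient-neg} and \eqref{eq-quotient-pos} give directly, with no reversal needed:
\begin{itemize}
\item the $k$-th term equals $t^{(n-k)(m-k)-nm}\tbinom{n}{k}\tbinom{M+m}{m-k}\big/\tbinom{M+m}{m}$;
\item the right side equals $t^{-nm}\tbinom{M+m+n}{m}\big/\tbinom{M+m}{m}$.
\end{itemize}
The identity thus becomes
\[
\sum_{k=0}^{m}t^{(n-k)(m-k)}\tbinom{n}{k}\tbinom{M+m}{m-k}=\tbinom{M+m+n}{m}.
\]
This is precisely the coefficient of $x^m$ in $(x;t)_n(xt^n;t)_N=(x;t)_{n+N}$ with $N=M+m$, which your last bullet correctly handles. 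With this second specialization added, your interpolation proof is complete.
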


Finally, the third identity can be found in \cite{MR1858275}.
\begin{lemma}[Inverse $t$-binomial expansion]\label{lem-inversetbinomial}
For any indeterminate $z$ and nonnegative integer $n$, the following identity holds:
\begin{align}
z^{n}=\sum_{k=0}^{n}(-1)^{k}t^{\binom{k+1}{2}-nk}\tbinom{n}{k}(z;t)_{k}.
\end{align}
\end{lemma}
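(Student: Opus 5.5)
The statement to prove is the inverse $t$-binomial expansion, Lemma \ref{lem-inversetbinomial}:
\begin{align*}
z^{n}=\sum_{k=0}^{n}(-1)^{k}t^{\binom{k+1}{2}-nk}\tbinom{n}{k}(z;t)_{k}.
\end{align*}
I would argue by polynomial interpolation. Both sides are polynomials in $z$ of degree at most $n$. It therefore suffices to check equality at the $n+1$ distinct points $z=t^{-m}$, $m=0,1,\dots,n$, treating $t$ as an indeterminate.

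Fix $m$ with $0\le m\le n$. By the quotient formula \eqref{eq-quotient-neg}, $(t^{-m};t)_k$ vanishes for $k>m$ and equals $(-1)^k t^{-mk+\binom{k}{2}}\frac{(t;t)_m}{(t;t)_{m-k}}$ for $k\le m$. Substituting, and noting that $\binom{k+1}{2}+\binom{k}{2}=k^2$, the right-hand side becomes
\begin{align*}
\sum_{k=0}^{m} t^{k^2-(n+m)k}\tbinom{n}{k}\frac{(t;t)_m}{(t;t)_{m-k}}.
\end{align*}
We must show this equals $t^{-nm}$.

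The natural route is to recognize the sum as the $t$-Chu--Vandermonde identity (Lemma \ref{lem-chuvandermonde}). Write
\begin{align*}
\frac{(t;t)_m}{(t;t)_{m-k}} &= (-1)^k t^{mk-\binom{k}{2}}(t^{-m};t)_k,\\
\tbinom{n}{k} &= (-1)^k t^{nk-\binom{k}{2}}\frac{(t^{-n};t)_k}{(t;t)_k},
\end{align*}
both by \eqref{eq-quotient-neg}. The signs cancel, and the powers of $t$ collapse to $t^{k^2-2\binom{k}{2}}=t^{k}$. The sum then becomes
\begin{align*}
\sum_{k}\frac{(t^{-m};t)_k(t^{-n};t)_k}{(t;t)_k}\,t^k.
\end{align*}
This is the Chu--Vandermonde sum with the roles $t^{-n}\mapsto t^{-m}$, $b=t^{-n}$, and $c\to 0$. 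The $c\to 0$ limit is harmless: each summand and the right-hand side are polynomial in $c$ for a terminating sum, so we may set $c=0$ in the identity, with $(c/b;t)_m\to 1$. The identity then gives $b^m=t^{-nm}$, as required.

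The only real obstacle is bookkeeping of exponents and signs in these conversions. One also has to justify the specialization $c=0$, which follows by clearing the denominator $(c;t)_m$ in the identity, or equivalently by using the limiting form $\sum_k \frac{(t^{-m};t)_k(b;t)_k}{(t;t)_k}t^k=b^m$ directly.
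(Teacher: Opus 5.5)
Your proof is correct. The interpolation nodes $z=t^{-m}$, $0\le m\le n$, are $n+1$ distinct elements of $\mathbb{Q}(t)$. Both conversions via \eqref{eq-quotient-neg} are right: the signs cancel and the exponent collapses to $k^2-2\binom{k}{2}=k$. Multiplying Lemma~\ref{lem-chuvandermonde} through by $(c;t)_m$ turns it into a polynomial identity in $c$ and $b$, so setting $c=0$ and then $b=t^{-n}$ is legitimate. One small wording point: when you write ``the roles $t^{-n}\mapsto t^{-m}$'', you mean that the terminating index of Lemma~\ref{lem-chuvandermonde} is taken to be $m$.

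The paper gives no proof of this lemma at all; it simply imports it from \cite{MR1858275}. So your route is genuinely different. You derive the inverse $t$-binomial expansion from the $t$-Chu--Vandermonde identity already recorded in the preliminaries, specifically from its degenerate $c=0$ form $\sum_{k}\frac{(t^{-m};t)_k(b;t)_k}{(t;t)_k}t^k=b^m$.

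What this buys is self-containment. The two external $q$-series inputs the paper relies on, used in Proposition~\ref{prop-sumcardrm} and in the proof of the theorem of Section~5, collapse to a single one, at the cost of a short computation. The citation is quicker but leaves the lemma as a black box.
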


\section{Expansion in terms of the modified Macdonald polynomials}

In \cite{MR1182707}, the author established an explicit formula for the classical two-column Kostka--Foulkes polynomials. By applying \eqref{eq-qtKostka}, we obtain the following expansion for the two-column modified Macdonald polynomials in terms of the modified Hall--Littlewood basis.

\begin{lemma}\cite{MR1182707}*{Reformulation of Theorem 1.1}
For any nonnegative integers $a$ and $b$,
\begin{align}\label{eq-Mac2HL}
\tilde{H}_{2^a1^b}[X;q,t]=\sum_{r=0}^{a}q^{a-r}t^{-(a-r)(a+b-r)}(qt^{-(a+b)};t)_{r}\frac{(t^{-a};t)_r}{(t^{-r};t)_r}\tilde{H}_{2^r1^{2a+b-2r}}[X;0,t].
\end{align}
\end{lemma}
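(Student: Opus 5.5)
The plan is to derive this expansion directly from Stembridge's (or the cited author's) explicit formula for two-column Kostka--Foulkes polynomials, via the standard dictionary between $K_{\lambda\mu}(q,t)$ at two columns and the Hall--Littlewood functions. Concretely, I will use the characterization of $\tilde H_\mu[X;q,t]$ by triangularity. The conditions are $\tilde H_\mu[X(1-q);q,t]\in\mathbb{Q}(q,t)\{s_\lambda:\lambda\unrhd\mu\}$ and $\tilde H_\mu[X(1-t);q,t]\in\mathbb{Q}(q,t)\{s_\lambda:\lambda\unrhd\mu'\}$, together with the normalization $\langle\tilde H_\mu,s_{(n)}\rangle=1$. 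These are applied to the candidate right-hand side of \eqref{eq-Mac2HL}.

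First, the $t$-triangularity and the normalization. Each $\tilde H_{2^r1^{2a+b-2r}}[X;0,t]$ satisfies $\tilde H_\nu[X(1-t);0,t]\in\mathrm{span}\{s_\lambda:\lambda\unrhd\nu'\}$, and its coefficient of $s_{(n)}$ is $1$. Every $\nu=2^r1^{2a+b-2r}$ with $r\le a$ has $\nu'\unrhd(2^a1^b)'$, so the $t$-triangularity holds termwise. The normalization reduces to the scalar identity
\[
\sum_{r=0}^a q^{a-r}t^{-(a-r)(a+b-r)}(qt^{-(a+b)};t)_r\frac{(t^{-a};t)_r}{(t^{-r};t)_r}=1 .
\]
I will rewrite this sum as a terminating ${}_2\phi_1$ using \eqref{eq-inversion} and \eqref{eq-quotient-neg}, and then evaluate it by Lemma~\ref{lem-chuvandermonde}. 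The choices are $n=a$ and $b\mapsto qt^{-(a+b)}$, with $c$ arranged so that the right side collapses to $1$. Alternatively, it is a finite $q$-polynomial identity that can be checked by induction on $a$.

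Second, the $q$-triangularity, which is the main obstacle. Since only two-column $\nu$ occur, $\tilde H_\nu[X(1-q);0,t]$ is a combination of $s_\lambda$ with $\lambda$ having at most two columns... This is not directly controlled, so rather than verify it by hand I will rely on the cited result. The lemma states that \eqref{eq-Mac2HL} is a reformulation of \cite{MR1182707}*{Theorem 1.1}. That theorem gives $K_{\lambda,(2^a1^b)}(q,t)$ as an explicit combination of the Kostka--Foulkes polynomials $K_{\lambda,2^r1^{2a+b-2r}}(t)$, with coefficients in $q,t$. The real proof is therefore a translation. I apply \eqref{eq-qtKostka}, $\tilde K_{\lambda\mu}(q,t)=t^{n(\mu)}K_{\lambda\mu}(q,t^{-1})$, to both sides and track $n(2^a1^b)=\binom{a+b}{2}+\binom{a}{2}$ against $n(2^r1^{2a+b-2r})$. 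Then I sum against $s_\lambda$ to obtain \eqref{eq-Mac2HL}.

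The bookkeeping step is to convert the cited coefficients after $t\mapsto t^{-1}$ into the stated form $q^{a-r}t^{-(a-r)(a+b-r)}(qt^{-(a+b)};t)_r(t^{-a};t)_r/(t^{-r};t)_r$. This uses \eqref{eq-inversion}, \eqref{eq-quotient-neg}, and the binomial identities of Lemma~\ref{lem-A+-B} to match exponents of $t$. In particular, the ratio $(t^{-a};t)_r/(t^{-r};t)_r$ equals $t^{-(a-r)r}\tbinom{a}{r}$ after \eqref{eq-quotient-neg}, which will absorb the shift $n(2^a1^b)-n(2^r1^{2a+b-2r})$. I expect this exponent matching to be the only delicate part. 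I will sanity-check it on $a=1$, where the formula reads $\tilde H_{21^b}=q\,t^{-(b+1)}\tilde H_{1^{b+2}}+(1-qt^{-(b+1)})\tilde H_{21^b}[X;0,t]$, and this agrees with known small cases.
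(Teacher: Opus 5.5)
Your proposal is correct and, once you fall back on the cited result, it is exactly the paper's route: the lemma is the two-column formula of the cited Theorem~1.1 rewritten through \eqref{eq-qtKostka}, with $n(2^a1^b)=\binom{a+b}{2}+\binom{a}{2}$ and $(t^{-a};t)_r/(t^{-r};t)_r=t^{-(a-r)r}\tbinom{a}{r}$ doing the exponent bookkeeping (your $a=1$ sanity check is also right). The triangularity preamble is then redundant and can be dropped; in particular, the abandoned remark that $\tilde H_\nu[X(1-q);0,t]$ involves only $s_\lambda$ with at most two columns is false, since for $\nu=(1^3)$ the coefficient of $s_{(3)}$ there is $(q;t)_3\neq 0$.
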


For related variants of the above expansion, we refer the reader to \cites{MR1663580,MR2948763}. Motivated by computational experiments, we now establish the corresponding inverse expansion.
\begin{lemma}
For any nonnegative integers $a$ and $b$, we have
\begin{align}\label{eq-HL2Mac}
\tilde{H}_{2^a1^b}[X;0,t]=\sum_{m=0}^{a}\frac{(-q)^{-m}t^{\binom{a+b+1}{2}-\binom{a+b+1-m}{2}}\tbinom{a}{m}}{(q^{-1}t^{2a+b-2m+1}; t)_{m}(q^{-1}t^{a+b-m};t)_{a-m}}\tilde{H}_{2^{m}1^{2a+b-2m}}[X;q,t].
\end{align}
\end{lemma}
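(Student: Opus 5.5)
The natural route is to invert the triangular expansion \eqref{eq-Mac2HL}. Fix $N=2a+b$ and index both families by the number of $2$'s. Set $P_r:=\tilde{H}_{2^r1^{N-2r}}[X;0,t]$ and $Q_m:=\tilde{H}_{2^m1^{N-2m}}[X;q,t]$ for $0\le r,m\le \lfloor N/2\rfloor$.

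Applying \eqref{eq-Mac2HL} to the partition $2^m1^{N-2m}$ gives
\begin{align*}
Q_m=\sum_{r=0}^{m}A_{m,r}P_r,\qquad A_{m,r}=q^{m-r}t^{-(m-r)(N-m-r)}(qt^{-(N-m)};t)_r\frac{(t^{-m};t)_r}{(t^{-r};t)_r}.
\end{align*}
This is a lower-triangular system with nonzero diagonal $A_{m,m}=(qt^{-(N-m)};t)_m$, so it is invertible. It therefore suffices to show that the matrix $C$ with entries
\begin{align*}
C_{a,m}=\frac{(-q)^{-m}t^{\binom{N-a+1}{2}-\binom{N-a+1-m}{2}}\tbinom{a}{m}}{(q^{-1}t^{N-2m+1};t)_m(q^{-1}t^{N-a-m};t)_{a-m}}
\end{align*}
(here $a+b=N-a$) is a left inverse of $A$. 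Equivalently, we must show
\begin{align*}
\sum_{m=r}^{a}C_{a,m}A_{m,r}=\delta_{a,r}\qquad\text{for all } 0\le r\le a\le N/2 .
\end{align*}
Substituting \eqref{eq-Mac2HL} into the right-hand side of \eqref{eq-HL2Mac} and comparing coefficients of $P_r$ gives exactly this identity. The basis property of the $P_r$ is not even needed, since we prove the identity directly.

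\textbf{Step 1: normalize the summand.} Write $m=r+k$ with $0\le k\le a-r$. Then rewrite every factor in the summand as a ratio of $t$-shifted factorials in $k$:
\begin{itemize}
\item $\tbinom{a}{m}$ via \eqref{equ-t-binomial};
\item $(t^{-m};t)_r/(t^{-r};t)_r=\tbinom{m}{r}$, using \eqref{eq-quotient-neg}, and then Lemma \ref{lem-tbinomexchange} to get $\tbinom{a}{m}\tbinom{m}{r}=\tbinom{a}{r}\tbinom{a-r}{k}$;
\item the $q$-dependent Pochhammers $(qt^{-(N-m)};t)_r$, $(q^{-1}t^{N-2m+1};t)_m$ and $(q^{-1}t^{N-a-m};t)_{a-m}$, which we convert with the inversion formula \eqref{eq-inversion} into Pochhammers in the single variable $x=q^{-1}$ with $t$-powers.
\end{itemize}
The expected outcome is the following. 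The factor $(qt^{-(N-m)};t)_r$ is, up to a monomial, $(q^{-1}t^{N-m-r+1};t)_r$. This should combine with $(q^{-1}t^{N-2m+1};t)_m$ through the splitting formula \eqref{eq-splitting}, since both are products of factors $1-q^{-1}t^{j}$ with $j$ in overlapping ranges. The sum should then reduce to the form
\begin{align*}
\tbinom{a}{r}\cdot(\text{monomial})\cdot\sum_{k=0}^{n}\frac{(t^{-n};t)_k(B;t)_k}{(C;t)_k(t;t)_k}t^k,\qquad n=a-r,
\end{align*}
with $B,C$ depending on $q^{-1}$, $t^{N}$ and $t^{r}$. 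If instead the denominators telescope into a punctured product \eqref{eq-punctured}, we would use a partial-fraction argument instead.

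\textbf{Step 2: evaluate with $t$-Chu--Vandermonde.} Apply Lemma \ref{lem-chuvandermonde} to get $b^n(C/B;t)_n/(C;t)_n$. For $n\ge1$ this should vanish because $C/B=t^{1-n}$, so that $(t^{1-n};t)_n=0$. For $n=0$ (that is, $r=a$) the single term is $C_{a,a}A_{a,a}$, and we verify directly that $C_{a,a}A_{a,a}=1$: the factor $(-q)^{-a}t^{\binom{N-a+1}{2}-\binom{N-2a+1}{2}}/(q^{-1}t^{N-2a+1};t)_a$ exactly cancels $(qt^{-(N-a)};t)_a$ by \eqref{eq-inversion}.

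\textbf{Main obstacle.} The hard part is Step 1: tracking the exponent of $t$ so that the leftover power is exactly $t^k$, which Chu--Vandermonde requires. This needs careful use of Lemma \ref{lem-A+-B} on $\binom{N-a+1}{2}-\binom{N-a+1-m}{2}$, on the power $-(m-r)(N-m-r)$, and on the monomials produced by the inversions. Identifying $B$ and $C$ so that $C/B=t^{1-n}$ is the key check. If the exponents do not match, the fallback is the inverse $t$-binomial expansion of Lemma \ref{lem-inversetbinomial} with $z=q^{-1}t^{\text{const}}$: it produces the vanishing of $\sum_k(-1)^kt^{\binom{k}{2}}\tbinom{n}{k}\cdot(\text{polynomial of degree}<n \text{ in } t^{-k})$ for $n\ge1$.
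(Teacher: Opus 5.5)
There is a gap in Steps 1--2. The triangular-inversion framework is sound, and as you say it needs no linear independence. The problem is the product you chose: $\sum_{m}C_{a,m}A_{m,r}$ does not reduce to a $t$-Chu--Vandermonde sum.

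Carry out Step 1 with $m=r+k$ and $n=a-r$. All powers of $q$ cancel. The $k$-dependent sign, $t$-binomial and $t$-power combine to exactly $\frac{(t^{-n};t)_k}{(t;t)_k}t^k$, provided you use $(t^{-m};t)_r/(t^{-r};t)_r=t^{-r(m-r)}\tbinom{m}{r}$ rather than $\tbinom{m}{r}$. So the exponent bookkeeping you worried about is not the issue.

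The issue is the $q$-dependent denominator. After $(qt^{-(N-m)};t)_r$ cancels against part of $(q^{-1}t^{N-2m+1};t)_m$, what remains is
\begin{align*}
(q^{-1}t^{N-2r-2k+1};t)_k\,(q^{-1}t^{N-a-r-k};t)_{n-k}=\prod_{\substack{j=N-a-r-k\\ j\neq N-2r-2k}}^{N-2r-k}\bigl(1-q^{-1}t^{j}\bigr).
\end{align*}
This is a punctured product whose window shifts by one while the puncture shifts by two as $k$ increases. Consequently, up to a $k$-independent factor, your sum equals
\begin{align*}
\sum_{k=0}^{n}\frac{(1-At^{2k})(A;t)_k(t^{-n};t)_k}{(t;t)_k(At^{n+1};t)_k}\,t^{nk},\qquad A=qt^{-(N-2r)}.
\end{align*}
This is a very-well-poised series: its term ratio contains $(1-At^{2k+2})/(1-At^{2k})$. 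It is therefore not of the form $\sum_k\frac{(t^{-n};t)_k(B;t)_k}{(C;t)_k(t;t)_k}t^k$ that Step 2 requires.

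Your fallbacks do not close this.
\begin{itemize}
\item The partial-fraction argument is only named, not carried out.
\item The inverse $t$-binomial fallback would need $t^{(1-n)k}$ times the reciprocal of the punctured product to be a polynomial of degree $<n$ in $t^{-k}$. It is a genuinely rational function of $t^k$. That its values at $k=0,\dots,n$ are interpolated in degree $<n$ is equivalent to the identity you are trying to prove.
\end{itemize}

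The fix is to verify the product in the other order, as the paper does in Proposition \ref{prop-sumcardrm}: prove $\sum_{s=m}^{a}A_{a,s}C_{s,m}=\delta_{a,m}$, i.e.\ $AC=I$.
\begin{itemize}
\item Now the outer partition is fixed, so $A_{a,s}$ supplies $(qt^{-(N-a)};t)_m(qt^{m-(N-a)};t)_{s-m}$ and $(t^{-a};t)_m(t^{-(a-m)};t)_{s-m}$, with $s$-independent bases.
\item The only $s$-dependent $q$-factor of $C_{s,m}$ is $(q^{-1}t^{N-s-m};t)_{s-m}$. By \eqref{eq-inversion} this is a monomial times $(qt^{2m+1-N};t)_{s-m}$.
\item This is exactly the ${}_2\phi_1$ you anticipated, with parameters $qt^{m-(N-a)}$ and $qt^{2m+1-N}$. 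Their ratio is $t^{1-(a-m)}$, so the sum vanishes for $m<a$.
\item The case $m=a$ is your check $C_{a,a}A_{a,a}=1$.
\end{itemize}
Since $A$ is a finite lower-triangular matrix (indices $0,\dots,\lfloor N/2\rfloor$) with nonzero diagonal, $AC=I$ forces $C=A^{-1}$, hence $CA=I$, which is the identity you need. Alternatively, run the paper's induction on $a$, which uses $AC=I$ directly.

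If you prefer to keep your ordering, you must instead invoke the terminating very-well-poised ${}_6\phi_5$ summation in the degenerate case $bc=aq$. That gives the factor $(1;t)_n=0$ for $n\geq 1$.
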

\begin{proof}
We proceed by induction on $a$. When $a=0$, the partition reduces to $(1^b)$. The desired result then follows directly from \eqref{eq-Ht1^n}, which establishes the base case.

For the inductive step, we now assume $a\geq 1$ and that the result holds for all smaller values of $a$ (and for any non-negative integer $b$). By \eqref{eq-Mac2HL}, we can write
\begin{align*}
\tilde{H}_{2^a1^b}[X;q,t]=\sum_{r=0}^{a}C_{a,r}\tilde{H}_{2^r1^{2a+b-2r}}[X;0,t],
\end{align*}
where
\begin{align*}
C_{a,r}:=q^{a-r}t^{-(a-r)(a+b-r)}(qt^{-(a+b)};t)_{r}\frac{(t^{-a};t)_r}{(t^{-r};t)_r}.
\end{align*}
Isolating the $r=a$ term and noting that $C_{a,a}=(qt^{-(a+b)};t)_a$, we have
\begin{align}\label{eq-HL2a1b-C}
\tilde{H}_{2^a1^b}[X;0,t]=\frac{1}{C_{a,a}}\tilde{H}_{2^a1^b}[X;q,t]-\sum_{r=0}^{a-1}\frac{C_{a,r}}{C_{a,a}}\tilde{H}_{2^r1^{2a+b-2r}}[X;0,t].
\end{align}
We can apply the induction hypothesis to each partition $(2^r,1^{2a+b-2r})$. This yields
\begin{align*}
\tilde{H}_{2^r1^{2a+b-2r}}[X;0,t]=\sum_{m=0}^{r}D_{r,m}\tilde{H}_{2^m1^{2a+b-2m}}[X;q,t],
\end{align*}
where
\begin{align*}
D_{r,m}:=\frac{(-q)^{-m}t^{\binom{2a+b-r+1}{2}-\binom{2a+b-r+1-m}{2}}\tbinom{r}{m}}{(q^{-1}t^{2a+b-2m+1}; t)_{m}(q^{-1}t^{2a+b-r-m};t)_{r-m}}.
\end{align*}
Substituting this identity into \eqref{eq-HL2a1b-C}, we obtain
\begin{align*}
\tilde{H}_{2^a1^b}[X;0,t] &= \frac{1}{C_{a,a}}\tilde{H}_{2^a1^b}[X;q,t]-\sum_{r=0}^{a-1}\frac{C_{a,r}}{C_{a,a}}\sum_{m=0}^{r}D_{r,m}\tilde{H}_{2^m1^{2a+b-2m}}[X;q,t] \\ &= \frac{1}{C_{a,a}}\tilde{H}_{2^a1^b}[X;q,t]-\sum_{m=0}^{a-1}\sum_{r=m}^{a-1}\frac{C_{a,r}}{C_{a,a}}D_{r,m}\tilde{H}_{2^m1^{2a+b-2m}}[X;q,t].
\end{align*}
To complete the induction step, we must verify that
\begin{align*}
\tilde{H}_{2^a1^b}[X;0,t] &= \sum_{m=0}^{a}D_{a,m}\tilde{H}_{2^m1^{2a+b-2m}}[X;q,t] \\ &= D_{a,a}\tilde{H}_{2^a1^b}[X;q,t]+\sum_{m=0}^{a-1}D_{a,m}\tilde{H}_{2^m1^{2a+b-2m}}[X;q,t].
\end{align*}
Comparing the coefficients, it suffices to show that
\begin{align*}
C_{a,a}D_{a,a}=1 \quad \text{and} \quad \sum_{r=m}^{a}C_{a,r}D_{r,m}=0 \quad \text{for } 0\leq m<a.
\end{align*}
We will prove these two identities in Propositions \ref{prop-caadaa} and \ref{prop-sumcardrm} below. Assuming these results for the moment, the induction step is complete.
\end{proof}

\begin{proposition}\label{prop-caadaa}
With $C_{a,r}$ and $D_{r,m}$ defined as above, then $C_{a,a}D_{a,a}=1$.
\end{proposition}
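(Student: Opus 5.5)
The plan is a direct computation: specialize both quantities at the diagonal index $r=m=a$, then use the inversion formula \eqref{eq-inversion} to rewrite the $t$-shifted factorial in $C_{a,a}$ so that it matches the denominator of $D_{a,a}$.

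First I specialize the definitions. For $C_{a,a}$, setting $r=a$ gives $q^{0}t^{0}=1$ and $(t^{-a};t)_a/(t^{-a};t)_a=1$, so $C_{a,a}=(qt^{-(a+b)};t)_a$, as already noted in the proof above. For $D_{a,a}$, setting $r=m=a$ gives $\tbinom{a}{a}=1$, and the second factor of the denominator, $(\,\cdot\,;t)_0$, equals $1$. The first denominator factor becomes $(q^{-1}t^{b+1};t)_a$. The power of $t$ is
\[
\binom{a+b+1}{2}-\binom{b+1}{2}=\sum_{j=b+1}^{a+b} j = ab+\binom{a+1}{2},
\]
which follows from Lemma \ref{lem-A+-B} or by direct summation. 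Hence
\[
D_{a,a}=\frac{(-q)^{-a}\,t^{ab+\binom{a+1}{2}}}{(q^{-1}t^{b+1};t)_a}.
\]

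Next I apply the inversion formula \eqref{eq-inversion} with $n=a$ and $x=qt^{-(a+b)}$. Then $x^{-1}t^{1-a}=q^{-1}t^{b+1}$, and therefore
\[
C_{a,a}=(-1)^a q^a t^{-a(a+b)+\binom{a}{2}}\,(q^{-1}t^{b+1};t)_a .
\]

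Multiplying the two expressions, the factor $(q^{-1}t^{b+1};t)_a$ cancels. The signs and powers of $q$ combine as $(-1)^a q^a(-q)^{-a}=1$. The total exponent of $t$ is
\[
-a^2-ab+\binom{a}{2}+ab+\binom{a+1}{2}=-a^2+a^2=0 .
\]
Hence $C_{a,a}D_{a,a}=1$.

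There is no real obstacle here. The only point needing care is applying the inversion formula with the correct base point so that the shifted factorial matches the denominator of $D_{a,a}$ exactly, and then tracking the exponent of $t$.
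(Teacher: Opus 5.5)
Your proposal is correct and takes essentially the same route as the paper: a direct computation via the inversion formula \eqref{eq-inversion}. The only difference is direction. The paper applies it to the denominator $(q^{-1}t^{b+1};t)_a$ of $D_{a,a}$, while you apply it to $C_{a,a}=(qt^{-(a+b)};t)_a$; this is the same identity read backwards, and your exponent bookkeeping $\binom{a}{2}+\binom{a+1}{2}=a^2$ checks out.
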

\begin{proof}
Recall that $C_{a,a}=(qt^{-(a+b)};t)_a$. By definition, we have
\begin{align*}
D_{a,a}=(-1)^{a}\frac{q^{-a}t^{\binom{a+b+1}{2}-\binom{b+1}{2}}}{(q^{-1}t^{b+1};t)_a}.
\end{align*}
Applying \eqref{eq-inversion} with $x=q^{-1}t^{b+1}$ and $n=a$, the denominator can be rewritten directly as
\begin{align*}
(q^{-1}t^{b+1};t)_a&=(-1)^{a}(q^{-1}t^{b+1})^{a}t^{\binom{a}{2}}(qt^{-b-1}t^{1-a};t)_a\\
&=(-1)^{a}q^{-a}t^{\binom{a+b+1}{2}-\binom{b+1}{2}}(qt^{-(a+b)};t)_a
\end{align*}
Substituting this back into the expression for $D_{a,a}$, it immediately follows that $C_{a,a}D_{a,a}=1$.
\end{proof}

\begin{proposition}\label{prop-sumcardrm}
The identity $\sum_{r=m}^{a}C_{a,r}D_{r,m}=0$ holds for any integer $m$ such that $0\leq m<a$, where $C_{a,r}$ and $D_{r,m}$ are defined as above.
\end{proposition}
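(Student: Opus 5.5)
We plan to reduce the sum to a terminating ${}_2\phi_1$ series and evaluate it with the $t$-Chu--Vandermonde identity (Lemma~\ref{lem-chuvandermonde}). The evaluation will contain a factor $(1;t)_n$, which vanishes. Throughout, set $n:=a-m\geq 1$ and reindex by $k:=r-m$, so that $0\le k\le n$.

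\emph{Step 1: simplify $C_{a,r}$.} Apply the quotient formula \eqref{eq-quotient-neg} to both $(t^{-a};t)_r$ and $(t^{-r};t)_r$. This gives
\[
\frac{(t^{-a};t)_r}{(t^{-r};t)_r}=t^{-r(a-r)}\tbinom{a}{r},
\]
so that $C_{a,r}=q^{a-r}t^{-(a-r)(a+b-r)-r(a-r)}(qt^{-(a+b)};t)_r\tbinom{a}{r}$. Next, use Lemma~\ref{lem-tbinomexchange} to rewrite
\[
\tbinom{a}{r}\tbinom{r}{m}=\tbinom{a}{m}\tbinom{a-m}{r-m}.
\]
The factor $\tbinom{a}{m}$ is independent of $r$, and so is the factor $(q^{-1}t^{2a+b-2m+1};t)_m$ in the denominator of $D_{r,m}$. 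Both can be pulled out of the sum.

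\emph{Step 2: isolate the $q$-dependent Pochhammer symbols.}
\begin{itemize}
\item By the splitting formula \eqref{eq-splitting}, $(qt^{-(a+b)};t)_r=(qt^{-(a+b)};t)_m\,(qt^{m-a-b};t)_k$, and the first factor is independent of $r$.
\item By the inversion formula \eqref{eq-inversion} with $x=q^{-1}t^{2a+b-r-m}$ and length $r-m=k$, the denominator factor becomes
\[
(q^{-1}t^{2a+b-r-m};t)_{k}=(-1)^k q^{-k}t^{(\ast)}\,(qt^{2m+1-2a-b};t)_k .
\]
\end{itemize}
The point is that the shifted base $qt^{2m+1-2a-b}$ no longer depends on $r$. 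After also writing $\tbinom{n}{k}=(-1)^k t^{nk-\binom{k}{2}}(t^{-n};t)_k/(t;t)_k$ via \eqref{eq-quotient-neg}, the sum becomes
\[
(\text{$r$-independent factor})\cdot\sum_{k=0}^{n}\frac{(t^{-n};t)_k\,(B;t)_k}{(c;t)_k\,(t;t)_k}\,q^{\epsilon k}t^{E(k)},\qquad B=qt^{m-a-b},\quad c=qt^{2m+1-2a-b},
\]
where $E(k)$ is a quadratic in $k$ and $\epsilon$ is an integer.

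\emph{Step 3: the main obstacle.} We need to check that all the powers of $q$ and $t$ collect to exactly $t^{k}$, so that $\epsilon=0$ and $E(k)=k$ up to a constant. The $q$-powers should cancel directly: $q^{a-r}$ from $C_{a,r}$ against $q^{-m}$ from $D_{r,m}$ and $q^{-k}$ from the inversion leaves no $k$-dependence. For the $t$-exponent, we will collect four contributions:
\begin{itemize}
\item $-(a-r)(a+b-r)-r(a-r)$ from $C_{a,r}$;
\item $\binom{2a+b-r+1}{2}-\binom{2a+b-r+1-m}{2}$ from $D_{r,m}$;
\item $(\ast)$ from the inversion;
\item $nk-\binom{k}{2}$ from the binomial.
\end{itemize}
We substitute $r=m+k$ and expand with Lemma~\ref{lem-A+-B}. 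We expect the $k^2$ terms to cancel and the linear coefficient to reduce to $1$. This bookkeeping is the step most likely to hide a sign or an off-by-one error, so we would verify it carefully, for instance by a small case such as $a=2$, $m=0$.

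\emph{Step 4: conclusion.} Granting Step 3, Lemma~\ref{lem-chuvandermonde} evaluates the sum as $B^n(c/B;t)_n/(c;t)_n$. Here $c/B=t^{m+1-a}=t^{1-n}$, so
\[
(c/B;t)_n=(t^{1-n};t)_n=\prod_{j=0}^{n-1}(1-t^{1-n+j}).
\]
The last factor ($j=n-1$) is $1-t^{0}=0$, since $n\geq 1$. Hence $\sum_{r=m}^{a}C_{a,r}D_{r,m}=0$ for every $0\le m<a$.
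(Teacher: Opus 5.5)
Your proposal is correct and is essentially the paper's proof: both reduce the sum to the $t$-Chu--Vandermonde identity with $B=qt^{m-a-b}$, $c=qt^{2m+1-2a-b}$ and weight $t^{k}$, so that the factor $(t^{m+1-a};t)_{a-m}=0$ kills it. The only cosmetic difference is that you route $(t^{-a};t)_r\tbinom{r}{m}/(t^{-r};t)_r$ through $\tbinom{a}{r}$ and Lemma~\ref{lem-tbinomexchange} instead of splitting $(t^{-a};t)_r$ directly, and the Step 3 bookkeeping you flagged does close: with $r=m+k$ the $k$-dependent part of the exponent is $(a+b)k-mk-k(2a+b-2m-k)-\binom{k}{2}+(a-m)k-\binom{k}{2}=k$, while the two factors $(-1)^k$ (from the inversion and from rewriting $\tbinom{n}{k}$) cancel.
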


\begin{proof}
Let us evaluate the following summation:
\begin{align*}
\sum_{r=m}^{a}q^{a-r}t^{-(a-r)(a+b-r)}(qt^{-(a+b)};t)_{r}\frac{(t^{-a};t)_r}{(t^{-r};t)_r}\frac{(-q)^{-m}t^{\binom{2a+b-r+1}{2}-\binom{2a+b-r+1-m}{2}}\tbinom{r}{m}}{(q^{-1}t^{2a+b-2m+1}; t)_{m}(q^{-1}t^{2a+b-r-m};t)_{r-m}}.
\end{align*}

By \eqref{eq-splitting}, we first extract the $m$-dependent terms in the numerator:
\begin{align*}
(qt^{-(a+b)};t)_r&=(qt^{-(a+b)};t)_m(qt^{-(a+b)+m};t)_{r-m},\\
(t^{-a};t)_r&=(t^{-a};t)_m(t^{-a+m};t)_{r-m}.
\end{align*}
Additionally, by expanding the $t$-binomial coefficient and applying \eqref{eq-quotient-neg} to the denominator, we obtain
\begin{align*}
\frac{\tbinom{r}{m}}{(t^{-r};t)_r}=\frac{(-1)^rt^{\binom{r+1}{2}}}{(t;t)_m(t;t)_{r-m}}.   
\end{align*}
For the denominator term, applying \eqref{eq-inversion} yields
\begin{align*}
(q^{-1}t^{2a+b-r-m};t)_{r-m}&=(-1)^{r-m}(q^{-1}t^{2a+b-r-m})^{r-m}t^{\binom{r-m}{2}} (qt^{2m+1-2a-b};t)_{r-m} \\
&=(-q^{-1}t^{2a+b-2m})^{r-m}t^{-\binom{r-m+1}{2}}(qt^{2m+1-2a-b};t)_{r-m}.
\end{align*}
Substituting these simplified expressions back into our summation, we can explicitly track the exact cancellations for the signs, powers of $q$, and powers of $t$:
\begin{align*}
(-1)^{m}(-1)^{r}(-1)^{r-m}&=1, \\
q^{a-r}q^{-m}q^{r-m}&=q^{a-2m}, \\
t^{-(a-r)(a+b-r)+\binom{2a+b-r+1}{2}-\binom{2a+b-r+1-m}{2}+\binom{r+1}{2}+\binom{r-m+1}{2}+(2a+b-2m)(m-r)}&=t^{\gamma+r-m},
\end{align*}
where $\gamma$ is a constant exponent depending only on $a, b$, and $m$. By gathering $t^\gamma$, $q^{a-2m}$, and all other factors independent of $r$ into a single overall coefficient $K$, the summation cleanly reduces to:
\begin{align*}
\sum_{r=m}^{a}C_{a,r}D_{r,m}&=K\sum_{r=m}^{a}\frac{(qt^{-(a+b)+m};t)_{r-m}(t^{-a+m};t)_{r-m}}{(qt^{2m+1-2a-b};t)_{r-m}(t;t)_{r-m}}t^{r-m}\\
&=K\sum_{k=0}^{a-m}\frac{(t^{-(a-m)};t)_k(qt^{-(a+b)+m};t)_{k}}{(qt^{2m+1-2a-b};t)_k(t;t)_k}t^k \\
&=K\cdot\frac{(t^{m+1-a};t)_{a-m}}{(qt^{2m+1-2a-b};t)_{a-m}}(qt^{-(a+b)+m})^{a-m}.
\end{align*}
Note that the second equality follows from the index shift $k=r-m$, while the final equality is an application of the $t$-Chu--Vandermonde identity in Lemma \ref{lem-chuvandermonde}. 

Finally, expanding the numerator of the resulting expression yields
\begin{equation*}
(t^{m+1-a};t)_{a-m} = (1-t^{m+1-a})\cdots(1-t^{m+1-a}t^{a-m-1}).
\end{equation*}
Since $a>m$, the final term evaluates exactly to $(1-t^0) = 0$. Therefore, the entire sum vanishes.
\end{proof}

\section{Proof of Conjecture \ref{cnj-2} in the two-column case}

In this section, we establish the following theorem, which resolves Conjecture \ref{cnj-2} for two-column partitions. Furthermore, we provide an explicit formula for the Schur expansion.

\begin{theorem}\label{thm-nablaHt}
For any nonnegative integers $a$ and $b$,
\begin{align*}
(-1)^{a}\nabla \tilde{H}_{2^a1^b}[X;0,t]&=q^{a}t^{\binom{2a+b}{2}+\binom{a+b}{2}}\omega \tilde{H}_{2^a1^b}\left[X;0,t^{-1}\right]\\
&=q^{a}t^{\binom{2a+b}{2}+\binom{a+b}{2}}\sum_{\lambda\vdash 2a+b}\sum_{T\in \mathrm{SSYT}(\lambda,2^a1^b)}t^{-\mathrm{cocharge}(T)}s_{\lambda'}.
\end{align*}
In particular, $(-1)^{a}\nabla \tilde{H}_{2^a1^b}[X;0,t]$ is Schur positive.
\end{theorem}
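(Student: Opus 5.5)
Apply $\nabla$ termwise to the inverse expansion \eqref{eq-HL2Mac}, and then recognise the result as a known expansion after the substitution $(q,t)\mapsto(q^{-1},t^{-1})$ followed by $\omega$.

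\textbf{Step 1: apply $\nabla$.} Write $n=2a+b$. Since $\tilde{H}_{2^m1^{n-2m}}[X;q,t]$ is a $\nabla$-eigenfunction with eigenvalue $T_{2^m1^{n-2m}}=q^{m}t^{\binom{n-m}{2}+\binom{m}{2}}$, \eqref{eq-HL2Mac} gives
\begin{align*}
\nabla\tilde{H}_{2^a1^b}[X;0,t]=\sum_{m=0}^{a}\frac{(-1)^{m}t^{\binom{a+b+1}{2}-\binom{a+b+1-m}{2}+\binom{n-m}{2}+\binom{m}{2}}\tbinom{a}{m}}{(q^{-1}t^{n-2m+1};t)_m(q^{-1}t^{a+b-m};t)_{a-m}}\tilde{H}_{2^m1^{n-2m}}[X;q,t].
\end{align*}
The factor $q^m$ from the eigenvalue cancels the $q^{-m}$ in the coefficient.

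\textbf{Step 2: rewrite in terms of the target.} Apply \eqref{equ-HtwHt} to each $\tilde{H}_{2^m1^{n-2m}}[X;q,t]$, replacing it by $T_{2^m1^{n-2m}}\,\omega\tilde{H}_{2^m1^{n-2m}}[X;q^{-1},t^{-1}]$.

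In parallel, take \eqref{eq-Mac2HL} and substitute $(q,t)\mapsto(q^{-1},t^{-1})$. This gives
\begin{align*}
\tilde{H}_{2^a1^b}[X;q^{-1},t^{-1}]=\sum_{r}C_{a,r}(q^{-1},t^{-1})\,\tilde{H}_{2^r1^{n-2r}}[X;0,t^{-1}].
\end{align*}
Inverting it exactly as in the proof of \eqref{eq-HL2Mac}, or simply substituting $(q,t)\mapsto(q^{-1},t^{-1})$ directly into \eqref{eq-HL2Mac}, gives
\begin{align*}
\omega\tilde{H}_{2^a1^b}[X;0,t^{-1}]=\sum_{m}D'_{a,m}\,\omega\tilde{H}_{2^m1^{n-2m}}[X;q^{-1},t^{-1}],
\end{align*}
where $D'_{a,m}$ is the coefficient of \eqref{eq-HL2Mac} with $q,t$ inverted.

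The theorem therefore reduces to a coefficientwise identity: for each $0\le m\le a$, the coefficient from Step 1 multiplied by $T_{2^m1^{n-2m}}$ must equal
\begin{align*}
(-1)^{a}q^{a}t^{\binom{n}{2}+\binom{a+b}{2}}\,D'_{a,m}.
\end{align*}
Since $\{\omega\tilde{H}_\mu[X;q^{-1},t^{-1}]\}$ is a basis, this identity suffices.

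\textbf{Step 3: verify the identity.} On each side, use the inversion formula \eqref{eq-inversion} to convert the denominators into Pochhammer symbols in $q^{+1}$. On the Step 1 side:
\begin{align*}
(q^{-1}t^{n-2m+1};t)_m&\ \text{becomes a multiple of}\ (qt^{2m-n-m};t)_m=(qt^{m-n};t)_m,\\
(q^{-1}t^{a+b-m};t)_{a-m}&\ \text{becomes a multiple of}\ (qt^{m+1-n};t)_{a-m}.
\end{align*}
On the $D'$ side, the denominators are $(qt^{-(n-2m+1)};t)_m$ and $(qt^{-(a+b-m)};t)_{a-m}$, and these are already in $q$. The bases of the two sets of Pochhammer symbols are shifted relative to each other. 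I would check whether the two products agree as multisets of factors (after reindexing, both should run over $1-qt^{j}$ for a common range of $j$), possibly after the splitting formula \eqref{eq-splitting}. For example, at $a=1$, $b=0$ both sides reduce to $1-qt^{-1}$ up to monomials.

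Once the Pochhammer parts match, what remains is to compare the signs, the powers of $q$, and the powers of $t$, including the $\tbinom{a}{m}$ versus its $t^{-1}$-version, which equals $t^{-m(a-m)}\tbinom{a}{m}$. This is a bookkeeping exercise with Lemma \ref{lem-A+-B}. I expect the main obstacle to be exactly this: confirming that the shifted Pochhammer ranges genuinely coincide, and tracking the exponent of $t$. I would test the cases $m=0$ and $m=a$ first to calibrate the exponents.

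\textbf{Step 4: Schur expansion and positivity.} The second equality of the theorem follows from the definition $\tilde{H}_\mu[X;0,t]=\sum_{\lambda}\tilde{K}_{\lambda,\mu}(t)s_\lambda$, together with \eqref{eq-kostka-foulkes}, $\omega s_\lambda=s_{\lambda'}$, and $t\mapsto t^{-1}$. Since $\mathrm{cocharge}(T)\le n(\mu)=\binom{a+b}{2}+a$, the $t$-exponents are nonnegative, which gives Schur positivity.
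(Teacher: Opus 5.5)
Your proposal is correct, and it takes a genuinely different and much shorter route than the paper. The paper applies $\nabla$ and \eqref{equ-HtwHt}, then treats the resulting combination $S_a$ as a rational function of $q$ and proves it constant by Liouville's theorem. The $q\to\infty$ limit keeps only the $r=0$ term and so identifies the answer. The finite poles at $q=t^p$ are shown to cancel in pairs $r\leftrightarrow p-b-r$, which needs the specialization Lemma~\ref{lem-Hlambda=Hmu} (via $B_{2^{a-r}1^{b+2r}}(t^{-p},t^{-1})=B_{2^{a-r'}1^{b+2r'}}(t^{-p},t^{-1})$) plus an explicit residue computation.

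You instead observe that the target $\omega\tilde H_{2^a1^b}[X;0,t^{-1}]$ already has an explicit expansion in the same basis $\{\omega\tilde H_\mu[X;q^{-1},t^{-1}]\}$. It is \eqref{eq-HL2Mac} under the field automorphism $(q,t)\mapsto(q^{-1},t^{-1})$, which is legitimate because $\tilde H_{2^a1^b}[X;0,t]$ does not involve $q$. The theorem thus becomes the termwise identity $D_{a,m}(q,t)\,T_{2^m1^{n-2m}}^2=(-1)^aq^at^{\binom n2+\binom{a+b}2}D_{a,m}(q^{-1},t^{-1})$, a self-duality of the transition coefficients. This identity does hold, so your argument avoids complex analysis, residue pairing and Lemma~\ref{lem-Hlambda=Hmu} entirely. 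The paper's method, by contrast, never uses the target's expansion and reads the answer off the limit. That template could help when no closed inverse expansion is available.

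Your Step 3 is easier than you expect; the ``shift'' you anticipate comes from two slips. First, substituting $t\mapsto t^{-1}$ changes the base, so the $D'$ denominators are $(qt^{-(n-2m+1)};t^{-1})_m$ and $(qt^{-(a+b-m)};t^{-1})_{a-m}$. Second, inverting $(q^{-1}t^{a+b-m};t)_{a-m}$ gives $(qt^{2m+1-n};t)_{a-m}$, not $(qt^{m+1-n};t)_{a-m}$. Pulling $-q^{-1}t^{c}$ out of each factor $1-q^{-1}t^{c}$ gives
\begin{align*}
(q^{-1}t^{n-2m+1};t)_m&=(-q^{-1})^{m}t^{m(n-2m+1)+\binom{m}{2}}(qt^{-(n-2m+1)};t^{-1})_m,\\
(q^{-1}t^{a+b-m};t)_{a-m}&=(-q^{-1})^{a-m}t^{(a-m)(a+b-m)+\binom{a-m}{2}}(qt^{-(a+b-m)};t^{-1})_{a-m},
\end{align*}
so the denominators match factor by factor, with no reindexing or splitting. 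Both sides then carry the sign and $q$-power $(-1)^{a+m}q^{a+m}$. Using $\genfrac{[}{]}{0pt}{}{a}{m}_{t^{-1}}=t^{-m(a-m)}\tbinom{a}{m}$, the $t$-exponents agree because
\begin{align*}
&\binom{a+b+1}{2}-\binom{a+b+1-m}{2}+2\binom{n-m}{2}+\binom{m}{2}-m(n-2m+1)-(a-m)(a+b-m)-\binom{a-m}{2}\\
&\qquad=\binom{n}{2}+\binom{a+b}{2}-\binom{a+b+1}{2}+\binom{a+b+1-m}{2}-m(a-m),
\end{align*}
which is checked by direct expansion; setting $c=a+b$ and $n=a+c$ makes the cancellation immediate.

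Finally, in Step 4, $n(2^a1^b)=\binom{a+b}{2}+\binom{a}{2}$, not $\binom{a+b}{2}+a$. Your nonnegativity conclusion is unaffected, since this is still at most $\binom{n}{2}+\binom{a+b}{2}$.
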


\begin{proof}
Multiplying both sides of \eqref{eq-HL2Mac} by $(-1)^a$ and applying the $\nabla$ operator, a direct calculation yields the following expression. Letting $\mathrm{LHS}$ denote the left-hand side of our first target identity, we have
\begin{align*}
\mathrm{LHS}&=\sum_{m=0}^{a}\frac{(-1)^{a-m}q^{-m}t^{\binom{a+b+1}{2}-\binom{a+b+1-m}{2}}\tbinom{a}{m}}{(q^{-1}t^{2a+b-2m+1}; t)_{m}(q^{-1}t^{a+b-m};t)_{a-m}}\left( q^{m}t^{\binom{2a+b-m}{2}+\binom{m}{2}}\tilde{H}_{2^m1^{2a+b-2m}}[X;q,t] \right) \\
&=\sum_{m=0}^{a}\frac{(-1)^{a-m}t^{\binom{a+b+1}{2}-\binom{a+b+1-m}{2}+\binom{2a+b-m}{2}+\binom{m}{2}}\tbinom{a}{m}}{(q^{-1}t^{2a+b-2m+1}; t)_{m}(q^{-1}t^{a+b-m};t)_{a-m}}\tilde{H}_{2^m1^{2a+b-2m}}[X;q,t].
\end{align*}

Next, we simplify the exponent of $t$. Applying Lemma \ref{lem-A+-B}, one can readily verify the identity
\begin{align*}
t^{\binom{a+b+1}{2}-\binom{a+b+1-m}{2}+\binom{2a+b-m}{2}+\binom{m}{2}}=t^{\binom{2a+b}{2}+\binom{a-m}{2}-\binom{a}{2}}.    
\end{align*}
Substituting this relation back, we extract the common factor $t^{\binom{2a+b}{2}}$. Simultaneously multiplying and dividing the expression by $q^a$, and reversing the summation index via $r=a-m$, we transform the expression into
\begin{align*}
\mathrm{LHS}&=t^{\binom{2a+b}{2}}\sum_{m=0}^{a}\frac{(-1)^{a-m}t^{\binom{a-m}{2}-\binom{a}{2}}\tbinom{a}{m}}{(q^{-1}t^{2a+b-2m+1};t)_{m}(q^{-1}t^{a+b-m};t)_{a-m}}\tilde{H}_{2^m 1^{2a+b-2m}}[X;q,t]\\
&=q^{a}t^{\binom{2a+b}{2}}\sum_{m=0}^{a}\frac{q^{-a}(-1)^{a-m}t^{\binom{a-m}{2}-\binom{a}{2}}\tbinom{a}{m}}{(q^{-1}t^{2a+b-2m+1};t)_{m}(q^{-1}t^{a+b-m};t)_{a-m}}\tilde{H}_{2^m 1^{2a+b-2m}}[X;q,t]\\
&=q^{a}t^{\binom{2a+b}{2}}\sum_{r=0}^{a}\frac{q^{-a}(-1)^{r}t^{\binom{r}{2}-\binom{a}{2}}\tbinom{a}{r}}{(q^{-1}t^{b+r};t)_{r}(q^{-1}t^{b+2r+1};t)_{a-r}}\tilde{H}_{2^{a-r} 1^{b+2r}}[X;q,t].
\end{align*}

Finally, applying the symmetry relation \eqref{equ-HtwHt} introduces the involution $\omega$:
\begin{align*}
\tilde{H}_{2^{a-r} 1^{b+2r}}[X;q,t]=q^{a-r}t^{\binom{a+b+r}{2}+\binom{a-r}{2}}\omega\tilde{H}_{2^{a-r} 1^{b+2r}}[X;q^{-1},t^{-1}].
\end{align*}
Substituting this back, we obtain
\begin{align*}
\mathrm{LHS}=q^{a}t^{\binom{2a+b}{2}}\sum_{r=0}^{a}\frac{(-1)^{r}t^{\binom{r}{2}-\binom{a}{2}+\binom{a+b+r}{2}+\binom{a-r}{2}}\tbinom{a}{r}}{q^{r}(q^{-1}t^{b+r};t)_{r}(q^{-1}t^{b+2r+1};t)_{a-r}}\omega\tilde{H}_{2^{a-r} 1^{b+2r}}[X;q^{-1},t^{-1}].
\end{align*}

Let us define the sum
\begin{align}\label{eq-Sa}
S_{a}:=\sum_{r=0}^{a}\frac{(-1)^{r}t^{\binom{r}{2}-\binom{a}{2}+\binom{a+b+r}{2}+\binom{a-r}{2}}\tbinom{a}{r}}{q^{r}(q^{-1}t^{b+r};t)_{r}(q^{-1}t^{b+2r+1};t)_{a-r}}\omega\tilde{H}_{2^{a-r} 1^{b+2r}}[X;q^{-1},t^{-1}],
\end{align}
where $S_{a,r}$ denotes its $r$-th summand. We claim that 
\begin{align}
S_{a} = t^{\binom{a+b}{2}}\omega \tilde{H}_{2^a1^b}[X;0,t^{-1}].
\end{align}
Establishing this identity directly yields the first equality in Theorem \ref{thm-nablaHt}, while the second equality follows from the combinatorial definition of the modified Hall--Littlewood polynomials. The validity of this claim relies on the two propositions presented below. Assuming these auxiliary results for the moment, the proof of the theorem is complete.
\end{proof}

Recall that, by Liouville's theorem in complex analysis, any rational function holomorphic on the entire Riemann sphere $\mathbb{C}\cup\{\infty\}$ must be constant. Therefore, to prove that $S_a$ is independent of $q$, it suffices to establish two facts: first, that $S_a$ has a finite limit as $q\to\infty$, and second, that $S_a$ has no finite poles (i.e., all potential finite poles are removable singularities). By a slight abuse of notation, we treat the expressions $S_a$ and $S_{a,r}$ directly as functions of a complex variable $q$. We formalize these two steps in the subsequent two propositions. Furthermore, the limit computed in the first proposition explicitly evaluates this constant.

\begin{proposition}
Let $S_{a}$ be the sum defined in \eqref{eq-Sa}. Then, we have
\begin{align*}
\lim_{q\to\infty}S_{a}=t^{\binom{a+b}{2}}\omega \tilde{H}_{2^a1^b}[X;0,t^{-1}].
\end{align*}
\end{proposition}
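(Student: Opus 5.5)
The plan is to pass to the limit term by term in the finite sum \eqref{eq-Sa}. Write $u=q^{-1}$, so that $q\to\infty$ corresponds to $u\to 0$. We check that each summand $S_{a,r}$ is a rational function of $u$ that is regular at $u=0$, and then evaluate it there. Since the sum has only $a+1$ terms, the limit of $S_a$ is the sum of the limits of the $S_{a,r}$.

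\textbf{Step 1: the symmetric-function factor.} By definition $\tilde{H}_{\mu}[X;q,t]=\sum_{\lambda}\tilde{K}_{\lambda,\mu}(q,t)s_\lambda$. The modified $q,t$-Kostka polynomials $\tilde{K}_{\lambda,\mu}(q,t)$ are polynomials in $q$ and $t$ (Haiman's theorem even gives $\mathbb{N}[q,t]$). Hence $\omega\tilde{H}_{2^{a-r}1^{b+2r}}[X;q^{-1},t^{-1}]$ has Schur coefficients that are polynomials in $u=q^{-1}$ with coefficients in $\mathbb{Z}[t^{-1}]$. Setting $u=0$ is the specialization defining the modified Hall--Littlewood polynomials, so
\begin{align*}
\lim_{q\to\infty}\omega\tilde{H}_{2^{a-r}1^{b+2r}}[X;q^{-1},t^{-1}]=\omega\tilde{H}_{2^{a-r}1^{b+2r}}[X;0,t^{-1}].
\end{align*}

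\textbf{Step 2: the scalar prefactor.} The denominator of $S_{a,r}$ is $q^{r}(q^{-1}t^{b+r};t)_r(q^{-1}t^{b+2r+1};t)_{a-r}$. The two $t$-shifted factorials are products of factors $(1-ut^{j})$, each of which tends to $1$ as $u\to0$. The prefactor of $S_{a,r}$ is therefore $u^{r}$ times a function tending to the finite constant $(-1)^r t^{\binom{r}{2}-\binom{a}{2}+\binom{a+b+r}{2}+\binom{a-r}{2}}\tbinom{a}{r}$. Combined with Step 1, this gives $S_{a,r}\to 0$ for every $r\ge 1$.

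\textbf{Step 3: the surviving term.} Only $r=0$ contributes. Its prefactor tends to
\begin{align*}
t^{0-\binom{a}{2}+\binom{a+b}{2}+\binom{a}{2}}=t^{\binom{a+b}{2}},
\end{align*}
and its symmetric-function factor tends to $\omega\tilde{H}_{2^a1^b}[X;0,t^{-1}]$ by Step 1. This yields exactly the claimed limit.

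There is no serious obstacle here. The only point requiring care is the justification in Step 1 that the coefficients of $\tilde{H}_\mu[X;q^{-1},t^{-1}]$ have no poles at $q^{-1}=0$. This regularity, together with the $q^{-r}$ decay of the prefactors, is what forces every term with $r\ge1$ to vanish in the limit.
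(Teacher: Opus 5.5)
Your proposal is correct and follows the same approach as the paper. Only the $r=0$ summand survives because of the $q^{-r}$ factor, and it evaluates to $t^{\binom{a+b}{2}}\omega\tilde{H}_{2^a1^b}[X;0,t^{-1}]$. Your remark that the modified $q,t$-Kostka coefficients are polynomials, so that the symmetric-function factor is regular at $q^{-1}=0$, is a justification the paper leaves implicit.
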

\begin{proof}
Observe that as $q\to\infty$ (i.e., $q^{-1}\to 0$), all terms in the summation corresponding to $r\geq 1$ vanish due to the $q^r$ factor in the denominator. Consequently, only the $r=0$ term survives, yielding
\begin{align*}
\lim_{q\to\infty}S_{a}=\lim_{q\to \infty}\frac{t^{-\binom{a}{2}+\binom{a+b}{2}+\binom{a}{2}}}{(q^{-1}t^{b+1};t)_a}\omega\tilde{H}_{2^a1^b}[X;q^{-1},t^{-1}]=t^{\binom{a+b}{2}}\omega \tilde{H}_{2^a1^b}[X;0,t^{-1}].
\end{align*}
This completes the proof.
\end{proof}

\begin{proposition}
The function $S_a$ is holomorphic on the complex plane $\mathbb{C}$.
\end{proposition}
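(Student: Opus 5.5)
The plan is to locate every possible finite pole of $S_a$ explicitly, and then show that the residues cancel in pairs. The pairing comes from Lemma \ref{lem-Hlambda=Hmu}, which says that two different summands carry the same symmetric function at the pole. Throughout, I treat $t$ as a fixed generic complex number (for instance, not a root of unity and with $|t|\neq 1$). The statement for indeterminate $t$ then follows, because both sides are rational in $t$.

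\emph{Locating the poles.} First I check that $\omega\tilde{H}_{2^{a-r}1^{b+2r}}[X;q^{-1},t^{-1}]$ is a polynomial in $q^{-1}$ of degree $n(\mu')=a-r$. The summand $S_{a,r}$ has the prefactor $q^{-r}\big/\bigl((q^{-1}t^{b+r};t)_r(q^{-1}t^{b+2r+1};t)_{a-r}\bigr)$. Near $q=0$ this prefactor behaves like $q^{a-r}$ times a unit, so it cancels the $q^{-(a-r)}$ growth of the Macdonald factor, and $q=0$ is removable. The only remaining candidate poles are the zeros of the $t$-Pochhammer symbols, namely $q=t^j$ with
\begin{align*}
j\in\{b+r,\dots,b+2r-1\}\cup\{b+2r+1,\dots,a+b+r\}.
\end{align*}
For generic $t$ these are simple poles. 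The value $j=b+2r$ is exactly the one that is missing from this list.

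\emph{Pairing via Lemma \ref{lem-Hlambda=Hmu}.} At $q=t^j$ we need $B_{2^{a-r}1^{b+2r}}(t^{-j},t^{-1})$. The exponents of $t^{-1}$ appearing in it form the multiset
\begin{align*}
\{0,\dots,A-1\}\cup\{j,\dots,j+C-1\},\qquad A=a+b+r,\; C=a-r.
\end{align*}
This multiset is unchanged under $(A,C)\mapsto(j+C,\,A-j)$, which corresponds to the involution $r\mapsto r':=j-b-r$. The fixed point of this involution is $j=b+2r$, and that is precisely the excluded value. Hence Lemma \ref{lem-Hlambda=Hmu} gives
\begin{align*}
\tilde{H}_{2^{a-r}1^{b+2r}}[X;t^{-j},t^{-1}]=\tilde{H}_{2^{a-r'}1^{b+2r'}}[X;t^{-j},t^{-1}],
\end{align*}
and the same equality holds after applying $\omega$. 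I then check directly that $S_{a,r}$ has a pole at $t^j$ exactly when $S_{a,r'}$ does, that is, that $0\le r'\le a$ and $j$ lies in the pole range for $r'$ precisely when it does for $r$. Consequently the residue of $S_a$ at $q=t^j$ is a sum of pairs
\begin{align*}
\Bigl(\Res_{q=t^j}c_r(q)+\Res_{q=t^j}c_{r'}(q)\Bigr)\,\omega\tilde{H}_{2^{a-r}1^{b+2r}}[X;t^{-j},t^{-1}],
\end{align*}
where $c_r(q)$ denotes the scalar coefficient of $S_{a,r}$.

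\emph{Cancellation of scalar residues (main obstacle).} It remains to prove $\Res_{q=t^j}c_r=-\Res_{q=t^j}c_{r'}$. I will compute each residue by isolating the vanishing factor $(1-q^{-1}t^{j})$ from whichever Pochhammer symbol contains it, using the punctured product \eqref{eq-punctured}. I will then rewrite the remaining factors, which are evaluated at $q^{-1}=t^{-j}$, as ratios of $(t;t)$-factorials via \eqref{eq-quotient-neg} and \eqref{eq-quotient-pos}. Finally, I will compare the two results using $\tbinom{a}{r}$, $\tbinom{a}{r'}$ and Lemma \ref{lem-A+-B} to match the powers of $t$.

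The case split (whether $t^j$ comes from the first or the second Pochhammer symbol) should be interchanged by $r\leftrightarrow r'$, so essentially only one computation is needed. The sign $(-1)^r$ versus $(-1)^{r'}$, together with the sign produced by the inversion formula, should supply the relative minus sign. I expect this bookkeeping of exponents and signs to be the hardest step. If it becomes unwieldy, I will first verify it for small $a$ to pin down the exact form of the ratio $c_r/c_{r'}$ near $t^j$.

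Once every residue vanishes, $S_a$ has no finite poles, i.e. it is holomorphic on $\mathbb{C}$, which is the statement to be proved.
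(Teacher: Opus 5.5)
Your proposal is correct and is essentially the paper's proof. It uses the same candidate poles, the same involution $r\mapsto r'=j-b-r$ (your multiset argument is Lemma \ref{lem-wHta-ra-r'}), and the same residue bookkeeping via \eqref{eq-punctured}, \eqref{eq-quotient-neg} and \eqref{eq-quotient-pos}. The paper carries that computation out in Proposition \ref{prop-Rescalculation}: the two residues share a common rational factor, and their signs are $(-1)^{d-1}$ and $(-1)^{d}$ with $d=j-b$, exactly as you anticipate.

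Two of your details go beyond what the paper writes. Your check that $q=0$ is removable, using that $\tilde{H}_{\mu}$ has $q$-degree $n(\mu')=a-r$, is omitted in the paper, and it is actually needed for holomorphy on all of $\mathbb{C}$. Your genericity assumption on $t$, which guarantees simple poles, is only implicit in the paper.
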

\begin{proof}
As discussed, we must analyze the potential poles of $S_a$. These poles originate from the roots of the denominators in the summand $S_{a,r}$, namely $(q^{-1}t^{b+r};t)_{r}$ and $(q^{-1}t^{b+2r+1};t)_{a-r}$. The first denominator $(q^{-1}t^{b+r};t)_{r}$ introduces potential poles at $q=t^{p}$ for integers $p$ in the range $b+r\leq p\leq b+2r-1$. Similarly, the second denominator $(q^{-1}t^{b+2r+1};t)_{a-r}$ produces poles at $q=t^p$ for $p$ in the range $b+2r+1\leq p\leq a+b+r$. Our strategy is to demonstrate that these poles cancel pairwise between different summation terms in $S_a$.

Fix an integer $p$ such that $b\leq p\leq 2a+b$. For a given pole $q=t^p$, we categorize the relevant summation indices $r\in\{0,1,\dots,a\}$ into two disjoint sets, $A$ and $B$, corresponding to poles originating from the second and first denominators, 
\begin{align*}
A:&=\{r: 2r+1\leq p-b\leq a+r\},\\
B:&=\{r: r\leq p-b\leq 2r-1\}.
\end{align*}

Without loss of generality, assume that a term $S_{a,r}$ possesses a pole at $q=t^p$ originating from the second factor, meaning $r\in A$. This membership implies two crucial bounds: $2r<p-b$ and $r\geq p-b-a$. We now introduce a dual index $r':=p-b-r$. First, we must ensure that $r'$ is a valid summation index. Since $r\geq p-b-a$, we immediately obtain the upper bound
\begin{align*}
r'=p-b-r\leq p-b-(p-b-a)=a.    
\end{align*}
Coupled with $2r<p-b\implies r'>r\geq 0$, we confirm that $0 \leq r' \leq a$. Next, we verify that $r' \in B$. By definition, we need to check if $r' \leq p-b \leq 2r'-1$. The left inequality $r'\leq p-b$ simplifies to $p-b-r \leq p-b$, yielding $r \geq 0$, which is trivially true. For the right inequality, we substitute $r = p-b-r'$ into the original condition $2r+1\leq p-b$ to deduce
\begin{align*}
2(p-b-r')+1\leq p-b\implies p-b+1\leq 2r'.
\end{align*}
This implies $p-b \leq 2r'-1$. Thus, we have rigorously shown that $r\in A\implies r'\in B$. By the symmetry of the linear transformation $r' = p-b-r$, it is straightforward to verify the converse, establishing a bijection $r \in A \iff r' \in B$. Furthermore, since $2r<p-b$, we have $r'=p-b-r>r$, ensuring these paired indices are distinct. Consequently, whenever $S_{a,r}$ has a pole at $q=t^p$ from the second factor, there is a distinct, paired term $S_{a,r'}$ sharing the exact same pole originating from the first factor. This structural symmetry sets the stage for proving that their corresponding residues perfectly cancel.

If the denominator of $S_{a,r}$ does not vanish at $q=t^p$, then $\Res_{q=t^p}S_{a,r}=0$. Since nonzero residues only occur when $r\in A \cup B$, we obtain
\begin{align*}
\Res_{q=t^p}S_{a}=\sum_{r=0}^{a}\Res_{q=t^p}S_{a,r}=\sum_{r\in A\cup B}\Res_{q=t^p}S_{a,r}=\sum_{r\in A}(\Res_{q=t^p}S_{a,r}+\Res_{q=t^p}S_{a,r'})
\end{align*}
This exact cancellation, namely that $\Res_{q=t^p}S_{a,r}+\Res_{q=t^p}S_{a,r'}=0$ for each $r\in A$, is established in Proposition \ref{prop-Rescalculation}.
\end{proof}

To demonstrate that the sum of these residues vanishes, a crucial step is to analyze the contribution of the Macdonald polynomials within the summands.

\begin{lemma}\label{lem-wHta-ra-r'}
Fix $p$. For any index $r\in A$, let $r':=p-b-r$ be its paired index in $B$. Then we have
\begin{align*}
\omega\tilde{H}_{2^{a-r}1^{b+2r}}[X;t^{-p},t^{-1}]=\omega\tilde{H}_{2^{a-r'}1^{b+2r'}}[X;t^{-p},t^{-1}].
\end{align*}
\end{lemma}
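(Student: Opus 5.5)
Since $\omega$ is linear, it suffices to show $\tilde{H}_{2^{a-r}1^{b+2r}}[X;t^{-p},t^{-1}]=\tilde{H}_{2^{a-r'}1^{b+2r'}}[X;t^{-p},t^{-1}]$. Both partitions have size $2a+b$, and the specialization is of the form $(q,t)=(t^{r_0},t^{s_0})$ with integers $r_0=-p$, $s_0=-1$. By Lemma \ref{lem-Hlambda=Hmu}, we only need to check the equality of the specialized $B$-statistics:
\begin{align*}
B_{2^{a-r}1^{b+2r}}(t^{-p},t^{-1})=B_{2^{a-r'}1^{b+2r'}}(t^{-p},t^{-1}).
\end{align*}

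For a two-column partition $\mu=(2^{c},1^{d})$, the cells of the first column have $(\mathrm{coarm},\mathrm{coleg})=(0,j)$ for $0\le j\le c+d-1$. The cells of the second column have $(1,j)$ for $0\le j\le c-1$. Therefore
\begin{align*}
B_{2^c1^d}(q,t)=\sum_{j=0}^{c+d-1}t^{j}+q\sum_{j=0}^{c-1}t^{j}.
\end{align*}
Specializing $q=t^{-p}$ and $t\mapsto t^{-1}$ gives $\sum_{j=0}^{c+d-1}t^{-j}+\sum_{j=0}^{c-1}t^{-p-j}$. The second sum is the set of exponents $\{-p,-p-1,\dots,-p-c+1\}$.

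For $\mu=(2^{a-r},1^{b+2r})$ we have $c+d=a+b+r$. The first block contributes exponents $0,-1,\dots,-(a+b+r-1)$, and the second contributes exponents $-p,\dots,-(p+a-r-1)$. Write $r'=p-b-r$, so that $p=b+r+r'$. The first block for $r$ ends at $-(a+b+r-1)$. The second block for $r$ starts at $-(b+r+r')$ and ends at $-(a+b+r'-1)$.

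We compare this with the first block for $r'$, namely $\{0,\dots,-(a+b+r'-1)\}$, and the second block for $r'$, namely $\{-(b+r+r'),\dots,-(a+b+r-1)\}$. Both multisets are unions of two integer intervals: $[0,a+b+r-1]\cup[b+r+r',a+b+r'-1]$ for $r$, and $[0,a+b+r'-1]\cup[b+r+r',a+b+r-1]$ for $r'$, after negating exponents. These are the same multiset. Indeed, if $I=[0,x]$ and $J=[y,z]$, and we swap the right endpoints, then $[0,x]+[y,z]=[0,z]+[y,x]$ as multisets whenever $y\le \min(x,z)+1$. The defining conditions of $A$ and $B$ guarantee this. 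For $r\in A$, we have $p-b\le a+r$, so $b+r+r'\le a+b+r$, and also $r'\le a$. With these bounds, both sides count each integer in $[0,\max]$ once, plus an extra copy of each integer in $[y,\min]$, in the indicator sense. This proves the multiset equality and hence the equality of $B$-values.

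The only delicate point is this endpoint bookkeeping. We must confirm that the interval $[b+r+r',\cdot]$ is contained in or adjacent to $[0,\cdot]$ in both cases, so that the indicator-function identity $\mathbf 1_{[0,x]}+\mathbf 1_{[y,z]}=\mathbf 1_{[0,z]}+\mathbf 1_{[y,x]}$ holds. This follows directly from the inequalities defining $A$ and the bijection $A\leftrightarrow B$ established in the preceding proposition. With the $B$-values equal, Lemma \ref{lem-Hlambda=Hmu} followed by $\omega$ finishes the proof.
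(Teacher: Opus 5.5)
Your proof is correct and follows essentially the same route as the paper. Both arguments use Lemma \ref{lem-Hlambda=Hmu} to reduce to the equality $B_{2^{a-r}1^{b+2r}}(t^{-p},t^{-1})=B_{2^{a-r'}1^{b+2r'}}(t^{-p},t^{-1})$, write each side as a first-column block plus a second-column block, and observe that the two sides differ only by swapping the right endpoints. Your indicator-function formulation also states explicitly the condition $p\le a+b+r$ (equivalently $r'\le a$, from the definition of $A$), which the paper's final cancellation of sums uses without comment.
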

\begin{proof}
By Lemma \ref{lem-Hlambda=Hmu}, it suffices to verify that
\begin{align*}
B_{2^{a-r}1^{b+2r}}(t^{-p},t^{-1})=B_{2^{a-r'}1^{b+2r'}}(t^{-p},t^{-1}).
\end{align*}

For a general two-column partition $\mu=(2^k1^\ell)$, evaluating $B_{\mu}(q,t)=\sum_{c\in\mu}q^{\mathrm{coarm}_{\mu}(c)}t^{\mathrm{coleg}_{\mu}(c)}$ at $(q,t)=(t^{-p},t^{-1})$ yields a sum of two geometric progressions:
\begin{equation*}
B_{2^k1^\ell}(t^{-p},t^{-1})=\sum_{i=0}^{k+\ell-1}t^{-i}+\sum_{i=0}^{k-1}t^{-(p+i)}.
\end{equation*}

Substituting $k=a-r$ and $\ell=b+2r$, and shifting the summation index in the second term via $j=p+i$, we obtain
\begin{align*}
B_{2^{a-r}1^{b+2r}}(t^{-p},t^{-1})
&=\sum_{i=0}^{a+b+r-1}t^{-i}+\sum_{i=0}^{a-r-1}t^{-(p+i)}=\sum_{i=0}^{a+b+r-1}t^{-i}+\sum_{j=p}^{p+a-r-1}t^{-j}\\
&= \sum_{i=0}^{a+b+r-1}t^{-i}+\sum_{j=p}^{a+b+r'-1}t^{-j},   
\end{align*}
where the last equality follows from the relation $p+a-r-1=a+b+r'-1$.
Similarly, we have
\begin{align*}\label{eq-B-rho-sum}
B_{2^{a-r'}1^{b+2r'}}(t^{-p},t^{-1})=\sum_{i=0}^{a+b+r'-1}t^{-i}+\sum_{j=p}^{a+b+r-1}t^{-j}.
\end{align*}
Recall that $r'>r$ for any $r\in A$. Taking the difference between the two expressions, the common terms cancel out, leaving
\begin{align*}
B_{2^{a-r'}1^{b+2r'}}(t^{-p},t^{-1})-B_{2^{a-r}1^{b+2r}}(t^{-p},t^{-1})=\sum_{k=a+b+r}^{a+b+r'-1}t^{-k}-\sum_{k=a+b+r}^{a+b+r'-1}t^{-k}=0.
\end{align*}
This completes the proof.
\end{proof}

\begin{proposition}\label{prop-Rescalculation}
Following the notation established above, we have
\begin{align*}
Res_{q=t^p}S_{a,r}+Res_{q=t^p}S_{a,r'}=0.
\end{align*}
\end{proposition}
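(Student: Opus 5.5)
The plan is to reduce the residue identity to a scalar identity, using Lemma \ref{lem-wHta-ra-r'} to identify the two symmetric-function factors at $q=t^p$. Write $S_{a,r}=c_r(q)\,\omega\tilde{H}_{2^{a-r}1^{b+2r}}[X;q^{-1},t^{-1}]$, where $c_r(q)$ is the rational coefficient in \eqref{eq-Sa}. The factor $\omega\tilde{H}_{2^{a-r}1^{b+2r}}[X;q^{-1},t^{-1}]$ has coefficients that are polynomials in $q^{-1}$, so it is holomorphic at $q=t^p\neq 0$. If $c_r$ has only a simple pole there, then
\begin{align*}
\Res_{q=t^p}S_{a,r}=\left(\Res_{q=t^p}c_r\right)\omega\tilde{H}_{2^{a-r}1^{b+2r}}[X;t^{-p},t^{-1}].
\end{align*}
By Lemma \ref{lem-wHta-ra-r'}, the symmetric functions attached to $r$ and $r'=p-b-r$ coincide at $q=t^p$. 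It therefore suffices to show
\begin{align*}
\Res_{q=t^p}c_r+\Res_{q=t^p}c_{r'}=0 \qquad (r\in A).
\end{align*}

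\textbf{Step 1: the poles are simple.} In $c_r$ the two denominators $(q^{-1}t^{b+r};t)_r$ and $(q^{-1}t^{b+2r+1};t)_{a-r}$ vanish at $q=t^{j}$ for $j\in[b+r,b+2r-1]$ and $j\in[b+2r+1,a+b+r]$ respectively. These ranges are disjoint and each linear factor $1-q^{-1}t^j$ occurs once, so every pole is simple.

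\textbf{Step 2: compute each residue.} Since $1/(1-q^{-1}t^p)=q/(q-t^p)$, the residue of this factor at $q=t^p$ is $t^p$. The rest of the denominator is evaluated at $q^{-1}=t^{-p}$ with the vanishing factor removed, using the punctured product formula \eqref{eq-punctured}.

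For $r\in A$ the vanishing factor lies in $(q^{-1}t^{b+2r+1};t)_{a-r}$, at position $k=p-b-2r-1$. The remaining factors are $(t^{b+2r+1-p};t)_{k}$, a product of $1-t^{-i}$ for $i=1,\dots,k$, and $(t;t)_{a+b+r-p}$. The other, non-vanishing denominator becomes $(t^{b+r-p};t)_r$, a product of $1-t^{-i}$ for $i$ running from $r'-r+1$ up to $r'$.

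For $r'\in B$ the vanishing factor lies in $(q^{-1}t^{b+r'};t)_{r'}$, at position $p-b-r'=r$. The punctured product gives $(t^{b+r'-p};t)_r\,(t;t)_{r'-r-1}$, where $(t^{b+r'-p};t)_r=(t^{-r};t)_r$. The other denominator becomes $(t^{b+2r'+1-p};t)_{a-r'}$, which equals $(t^{r'-r+1};t)_{a-r'}$.

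\textbf{Step 3: compare.} Convert every negative-exponent product to positive form using \eqref{eq-inversion} and \eqref{eq-quotient-neg}, and every positive one using \eqref{eq-quotient-pos}. Both residues then become explicit signed monomials in $t$ times ratios of $(t;t)_n$. Combine these with the prefactors $(-1)^r t^{\binom{r}{2}-\binom{a}{2}+\binom{a+b+r}{2}+\binom{a-r}{2}}\tbinom{a}{r}q^{-r}$, evaluated at $q=t^p$. Finally, show that the ratio $\Res c_{r'}/\Res c_r$ equals $-1$. The sign should come from the parity of the number of inverted factors, $(-1)^{r'-r}$ from $(-1)^{r}$ versus $(-1)^{r'}$, and the extra $-1$ produced by \eqref{eq-inversion}. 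The $t$-factorials should match after using $p-b=r+r'$: for example $(t;t)_{a+b+r-p}=(t;t)_{a-r'}$, and $\tbinom{a}{r}$ versus $\tbinom{a}{r'}$ pair with $(t;t)_{r'-r-1}$ and $(t;t)_{k}$.

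The main obstacle is the bookkeeping of the $t$-exponent. It is a sum of several binomial coefficients in $a,b,r,r',p$, and I would verify that it vanishes by substituting $p=b+r+r'$ and repeatedly applying Lemma \ref{lem-A+-B}. The other delicate point is checking that the $t$-factorials cancel exactly, with no leftover $t$-binomial.
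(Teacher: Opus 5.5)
Your proposal is correct and follows the paper's proof essentially step for step. You isolate the vanishing factor with \eqref{eq-punctured}, identify the two symmetric-function factors via Lemma \ref{lem-wHta-ra-r'}, and convert the negative-exponent products with \eqref{eq-inversion} and \eqref{eq-quotient-neg}; both residues then carry the common factor $\frac{(t;t)_a(t;t)_{r'-r}}{(t;t)_r(t;t)_{a-r}(t;t)_{r'}(t;t)_{r'-r-1}(t;t)_{a-r'}}$ with signs $(-1)^{r'-r-1}$ versus $(-1)^{r'-r}$ and identical $t$-exponents (after substituting $p=b+r+r'$ the exponent difference reduces to $-p(r'-r)+p(r'-r)=0$), so the computation you left as bookkeeping does close. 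Your explicit simple-pole check and your retention of the common factor $t^p$ (the paper takes $\lim_{q\to t^p}(1-q^{-1}t^p)S_{a,r}$ as the residue) are harmless refinements.
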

\begin{proof}
We begin by evaluating the residue of $S_{a,r}$ at $q=t^p$. Applying \eqref{eq-punctured} with $x=q^{-1}t^{b+2r+1}$, $k=p-b-2r-1$, and $n=a-r$, we isolate the vanishing factor $(1-q^{-1}t^p)$ in the denominator:
\begin{align*}
(q^{-1}t^{b+2r+1};t)_{a-r}&=(1-q^{-1}t^p)(q^{-1}t^{b+2r+1};t)_{p-b-2r-1}(q^{-1}t^{p+1};t)_{a-p+b+r}.
\end{align*}
Evaluating the limit as $q\to t^p$ then yields the residue:
\begin{align*}
\Res_{q=t^p}S_{a,r}&=\lim_{q\to t^p}(1-q^{-1}t^p) \frac{(-1)^{r}t^{\binom{r}{2}-\binom{a}{2}+\binom{a+b+r}{2}+\binom{a-r}{2}}\tbinom{a}{r}}{q^{r}(q^{-1}t^{b+r};t)_{r}(q^{-1}t^{b+2r+1};t)_{a-r}}\omega\tilde{H}_{2^{a-r} 1^{b+2r}}[X;q^{-1},t^{-1}]\\
&= \frac{(-1)^{r}t^{\binom{r}{2}-\binom{a}{2}+\binom{a+b+r}{2}+\binom{a-r}{2}}\tbinom{a}{r}}{t^{pr}(t^{b+r-p};t)_{r}(t^{b+2r+1-p};t)_{p-b-2r-1}(t;t)_{a-p+b+r}}\omega\tilde{H}_{2^{a-r}1^{b+2r}}[X;t^{-p},t^{-1}].
\end{align*}
Similarly, for the paired index $r'=p-b-r\in B$, the vanishing factor resides in the first shifted factorial:
\begin{align*}
(q^{-1}t^{b+r'};t)_{r'} &= (1-q^{-1}t^p)(q^{-1}t^{p-r};t)_{r}(q^{-1}t^{p+1};t)_{p-b-2r-1}.
\end{align*}
Evaluating the associated limit, we obtain
\begin{align*}
\Res_{q=t^p}S_{a,r'} &= \lim_{q\to t^p}(1-q^{-1}t^p) \frac{(-1)^{r'}t^{\binom{r'}{2}-\binom{a}{2}+\binom{a+b+r'}{2}+\binom{a-r'}{2}}\tbinom{a}{r'}}{q^{r'}(q^{-1}t^{b+r'};t)_{r'}(q^{-1}t^{b+2r'+1};t)_{a-r'}}\omega\tilde{H}_{2^{a-r'} 1^{b+2r'}}[X;q^{-1},t^{-1}]\\
&= \frac{(-1)^{r'}t^{\binom{r'}{2}-\binom{a}{2}+\binom{a+b+r'}{2}+\binom{a-r'}{2}}\tbinom{a}{r'}}{t^{pr'}(t^{-r};t)_{r}(t;t)_{p-b-2r-1}(t^{p-b-2r+1};t)_{a-p+b+r}}\omega\tilde{H}_{2^{a-r'} 1^{b+2r'}}[X;t^{-p},t^{-1}].
\end{align*}

By Lemma \ref{lem-wHta-ra-r'}, the Macdonald polynomials in both residues evaluate to the identical expression. It therefore suffices to demonstrate that their remaining rational coefficients sum to zero. To this end, we apply \eqref{eq-inversion} and \eqref{eq-quotient-neg} to convert all $t$-shifted factorials with negative powers in the denominators into standard factorials $(t;t)_k$. For $\Res_{q=t^p}S_{a,r}$, we have
\begin{align*}
(t^{b+r-p};t)_{r}&=(-1)^{r}t^{r(b+r-p)+\binom{r}{2}}\frac{(t;t)_{p-b-r}}{(t;t)_{p-b-2r}},\\
(t^{b+2r+1-p};t)_{p-b-2r-1}&=(-1)^{p-b-2r-1}t^{-\binom{p-b-2r}{2}}(t;t)_{p-b-2r-1}.
\end{align*}
Likewise, for $\Res_{q=t^p}S_{a,r'}$, we obtain
\begin{align*}
(t^{-r};t)_{r}&=(-1)^{r}t^{-\binom{r+1}{2}}(t;t)_{r},\\
(t^{p-b-2r+1};t)_{a-p+b+r}&=\frac{(t;t)_{a-r}}{(t;t)_{p-b-2r}}.
\end{align*}
Expanding the $t$-binomial coefficients $\tbinom{a}{r}$ and $\tbinom{a}{r'}$ reveals a striking structural symmetry: both residue expressions share the exact same rational factor
\begin{align*}
\frac{(t;t)_a(t;t)_{p-b-2r}}{(t;t)_r(t;t)_{a-r}(t;t)_{p-b-r}(t;t)_{p-b-2r-1}(t;t)_{a-p+b+r}}.
\end{align*}
Factoring this out, the verification reduces to comparing the overall signs and the remaining powers of $t$. A direct computation confirms that the exponents of $t$ match identically, while their signs are strictly opposite. This exact cancellation establishes that $\Res_{q=t^p}S_{a,r}+\Res_{q=t^p}S_{a,r'}=0$, thereby completing the proof.
\end{proof}

\section{Proof of Conjecture \ref{cnj-3} in the two-column case}

\begin{theorem}
Let $a$ and $b$ be nonnegative integers. For any integer $j$ such that $0\leq j\leq a$,
\begin{align*}
\nabla\omega\tilde{H}_{2^{a-j}1^{b+2j}}[X;0,t^{-1}]=\sum_{i=j}^{a}M_{i,j}\omega\tilde{H}_{2^{a-i}1^{b+2i}}[X;0,t^{-1}].
\end{align*}
\end{theorem}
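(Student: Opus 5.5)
The plan is to reduce the statement to Theorem \ref{thm-nablaHt} by expressing each $\omega\tilde{H}_{2^{a-j}1^{b+2j}}[X;0,t^{-1}]$ through ordinary modified Hall--Littlewood polynomials $\tilde{H}_{2^{s}1^{\cdot}}[X;0,t]$. Theorem \ref{thm-nablaHt} evaluates $\nabla$ on the latter, and it returns $\omega\tilde{H}[X;0,t^{-1}]$'s. Concretely, set $n=2a+b$ and write $\mu_i:=2^{a-i}1^{b+2i}$. Theorem \ref{thm-nablaHt} applied to $\mu_i$ reads
\begin{align*}
\nabla \tilde{H}_{\mu_i}[X;0,t]=(-1)^{a-i}q^{a-i}t^{\binom{n}{2}+\binom{n-a+i}{2}}\,\omega\tilde{H}_{\mu_i}[X;0,t^{-1}].
\end{align*}
So $\nabla$ carries the span of $\{\tilde{H}_{\mu_i}[X;0,t]\}_i$, which equals the span of $\{\omega\tilde{H}_{\mu_i}[X;0,t^{-1}]\}_i$, into itself. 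It therefore suffices to know the transition matrix between the two families.

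The key step is to expand $\omega\tilde{H}_{\mu_j}[X;0,t^{-1}]$ in the basis $\tilde{H}_{\mu_i}[X;0,t]$, $i\ge j$. For this I would use \eqref{equ-HtwHt} at $q\to 0$ in the form $\omega\tilde{H}_{\mu}[X;q^{-1},t^{-1}]=T_\mu^{-1}\tilde{H}_\mu[X;q,t]$. I would then combine \eqref{eq-Mac2HL}, with $q$ replaced by $q^{-1}$ and $t$ by $t^{-1}$, and let $q\to\infty$ (equivalently $q^{-1}\to0$). This gives $\omega\tilde{H}_{\mu_j}[X;0,t^{-1}]$ as a limit of a combination of $\tilde{H}_{\mu_i}[X;q,t]$. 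Expanding those via \eqref{eq-HL2Mac} back into Hall--Littlewood polynomials $\tilde{H}_{\mu_i}[X;0,t]$ would produce an explicit triangular expansion $\omega\tilde{H}_{\mu_j}[X;0,t^{-1}]=\sum_{i\ge j}P_{i,j}\tilde{H}_{\mu_i}[X;0,t]$.

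If that limit is messy, the alternative route is to expand $\omega\tilde{H}_{\mu_j}[X;0,t^{-1}]$ in modified Macdonald polynomials directly, by substituting $q\mapsto q^{-1},t\mapsto t^{-1}$ in \eqref{eq-HL2Mac} for $\mu_j$ and applying \eqref{equ-HtwHt}. In that case $\nabla$ acts diagonally, $\nabla\tilde{H}_{\mu_m}[X;q,t]=T_{\mu_m}\tilde{H}_{\mu_m}[X;q,t]$. I would then re-expand the result using \eqref{eq-HL2Mac} applied to $\mu_i$ together with the identity from the proof of Theorem \ref{thm-nablaHt}, namely $S_a=t^{\binom{a+b}{2}}\omega\tilde{H}_{2^a1^b}[X;0,t^{-1}]$ (with $a,b$ replaced by $a-i,b+2i$), which already expresses $\omega\tilde{H}_{\mu_i}[X;0,t^{-1}]$ in the Macdonald basis. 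Either way, the proof comes down to the following matrix identity: if $N$ is the lower triangular matrix expressing $\omega\tilde{H}_{\mu_j}[X;0,t^{-1}]$ in the Macdonald basis and $D=\mathrm{diag}(T_{\mu_m})$, then $DN=NM$, with $M_{i,j}=T_{\mu_i}\tbinom{a-j}{i-j}$.

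The main obstacle is verifying $DN=NM$ entrywise, i.e.
\begin{align*}
\sum_{i=j}^{m}N_{m,i}\,T_{\mu_i}\tbinom{a-j}{i-j}=T_{\mu_m}N_{m,j},
\end{align*}
a single-sum $t$-hypergeometric identity in the summation index $i$. I expect $T_{\mu_i}=q^{a-i}t^{\binom{a+b+i}{2}+\binom{a-i}{2}}$ to combine with the $q$-powers and $t$-Pochhammer denominators in $N_{m,i}$, using \eqref{eq-inversion} and the splitting formula, into a terminating ${}_2\phi_1$. Lemma \ref{lem-tbinomexchange} would be used to merge $\tbinom{a-j}{i-j}$ with the binomial inside $N_{m,i}$, and the sum would then be evaluated by the $t$-Chu--Vandermonde identity (Lemma \ref{lem-chuvandermonde}) or by Lemma \ref{lem-inversetbinomial}. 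As a sanity check before the general computation, I would verify the diagonal case $i=j=m$, which is immediate, and the case $a=1$ by hand.
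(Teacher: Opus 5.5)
Your fallback route is essentially the paper's proof. You write both sides in the Macdonald basis, so the claim becomes $DN=NM$. You then merge $\tbinom{a-j}{i-j}$ with the binomial in $N$ via Lemma~\ref{lem-tbinomexchange}, split the Pochhammer symbols with \eqref{eq-splitting}, and sum with Lemma~\ref{lem-inversetbinomial}. The paper obtains $N$ from Theorem~\ref{thm-nablaHt} and \eqref{eq-HL2Mac} rather than from \eqref{eq-HL2Mac} at inverted parameters. The sum it actually evaluates is $\sum_{s=0}^{r-j}(-1)^s t^{\binom{s+1}{2}-(r-j)s}(q^{-1}t^{b+j+r};t)_s\tbinom{r-j}{s}=(q^{-1}t^{b+j+r})^{r-j}$. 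You only predict that this sum closes, so that part is a sketch of the paper's argument, not yet a proof.

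Your first route is a genuinely different and much shorter proof, but you have described its key step wrongly. Your claim that ``either way'' one lands on a summation identity is also incorrect.

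\begin{itemize}
\item Substituting $(q,t)\mapsto(q^{-1},t^{-1})$ in \eqref{eq-Mac2HL} and letting $q\to\infty$ kills every term but the top one. It returns only the tautology $\omega\tilde H_{\mu_j}[X;0,t^{-1}]=\omega\tilde H_{\mu_j}[X;0,t^{-1}]$.
\item \eqref{eq-HL2Mac} expands Hall--Littlewood into Macdonald, which is the wrong direction for what you need.
\end{itemize}

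What you want is \eqref{eq-Mac2HL} as it stands, for $(a-j,b+2j)$, with $\mu_i=2^{a-i}1^{b+2i}$. By \eqref{equ-HtwHt}, its left side divided by $T_{\mu_j}$ is $\omega\tilde H_{\mu_j}[X;q^{-1},t^{-1}]$. Its right side lies in the $q$-free basis $\tilde H_{\mu_i}[X;0,t]$, with index $r=a-i$ and coefficients of exact $q$-degree $a-j=n(\mu_j')$. So letting $q\to\infty$ just extracts leading coefficients. Using $(t^{-(a-j)};t)_r/(t^{-r};t)_r=t^{r^2-(a-j)r}\tbinom{a-j}{r}$ and Lemma~\ref{lem-A+-B}, this gives
\[
\omega\tilde H_{\mu_j}[X;0,t^{-1}]=\sum_{i=j}^{a}(-1)^{a-i}t^{\binom{a-i}{2}-\binom{2a+b}{2}}\tbinom{a-j}{i-j}\,\tilde H_{\mu_i}[X;0,t].
\]

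Now apply $\nabla$ termwise. Theorem~\ref{thm-nablaHt} for each $\mu_i$ contributes the factor $(-1)^{a-i}q^{a-i}t^{\binom{2a+b}{2}+\binom{a+b+i}{2}}$ in front of $\omega\tilde H_{\mu_i}[X;0,t^{-1}]$. The product of the two coefficients is $q^{a-i}t^{\binom{a-i}{2}+\binom{a+b+i}{2}}\tbinom{a-j}{i-j}=M_{i,j}$ exactly.

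So on this route no $t$-hypergeometric sum is needed. All the analytic work is absorbed into Theorem~\ref{thm-nablaHt}, and $M$ is explained as a diagonal matrix of constants from that theorem times the Hall--Littlewood change of basis. The paper instead works in the $\nabla$-eigenbasis and pays for that with the inverse $t$-binomial computation.
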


\begin{proof}
By Theorem \ref{thm-nablaHt} and \eqref{eq-HL2Mac}, the left-hand side (LHS) can be expanded as
\begin{align*}
\mathrm{LHS}&=\frac{(-q)^{j-a}}{t^{\binom{2a+b}{2}+\binom{a+b+j}{2}}}\sum_{m=0}^{a-j}\frac{(-q)^{-m}t^{\binom{a+b+j+1}{2}-\binom{a+b+j+1-m}{2}}\tbinom{a-j}{m}}{(q^{-1}t^{2a+b-2m+1};t)_m(q^{-1}t^{a+b+j-m};t)_{a-j-m}}\nabla^2\tilde{H}_{2^m1^{2a+b-2m}}[X;q,t]\\
&=\frac{(-q)^{j-a}}{t^{\binom{2a+b}{2}+\binom{a+b+j}{2}}}\sum_{r=j}^{a}\frac{(-q)^{r-a}t^{\binom{a+b+j+1}{2}-\binom{b+j+r+1}{2}}\tbinom{a-j}{a-r}}{(q^{-1}t^{b+2r+1};t)_{a-r}(q^{-1}t^{b+j+r};t)_{r-j}}T_{2^{a-r}1^{b+2r}}^{2}\tilde{H}_{2^{a-r}1^{b+2r}}[X;q,t]
\end{align*}
On the other hand, the right-hand side (RHS) is given by
\begin{align*}
\mathrm{RHS}&=\sum_{i=j}^{a}\frac{M_{i,j}(-q)^{i-a}}{t^{\binom{2a+b}{2}+\binom{a+b+i}{2}}}\sum_{r=i}^{a}\frac{(-q)^{r-a}t^{\binom{a+b+i+1}{2}-\binom{b+i+r+1}{2}}\tbinom{a-i}{a-r}}{(q^{-1}t^{b+2r+1};t)_{a-r}(q^{-1}t^{b+i+r};t)_{r-i}}T_{2^{a-r}1^{b+2r}}\tilde{H}_{2^{a-r}1^{b+2r}}[X;q,t]\\
&=\sum_{r=j}^{a}\sum_{i=j}^{r}\frac{M_{i,j}(-q)^{i-a}}{t^{\binom{2a+b}{2}+\binom{a+b+i}{2}}}\frac{(-q)^{r-a}t^{\binom{a+b+i+1}{2}-\binom{b+i+r+1}{2}}\tbinom{a-i}{a-r}}{(q^{-1}t^{b+2r+1};t)_{a-r}(q^{-1}t^{b+i+r};t)_{r-i}}T_{2^{a-r}1^{b+2r}}\tilde{H}_{2^{a-r}1^{b+2r}}[X;q,t]
\end{align*}
By equating the coefficients of $\tilde{H}_{2^{a-r}1^{b+2r}}[X;q,t]$ on both sides for each $j\leq r\leq a$ and canceling a single common factor of $T_{2^{a-r}1^{b+2r}}$, the problem reduces to verifying the following identity:
\begin{align*}
\frac{(-q)^{j-a}}{t^{\binom{2a+b}{2}+\binom{a+b+j}{2}}}&\frac{(-q)^{r-a}t^{\binom{a+b+j+1}{2}-\binom{b+j+r+1}{2}}\tbinom{a-j}{a-r}}{(q^{-1}t^{b+2r+1};t)_{a-r}(q^{-1}t^{b+j+r};t)_{r-j}}T_{2^{a-r}1^{b+2r}}\\
&=\sum_{i=j}^{r}\frac{M_{i,j}(-q)^{i-a}}{t^{\binom{2a+b}{2}+\binom{a+b+i}{2}}}\frac{(-q)^{r-a}t^{\binom{a+b+i+1}{2}-\binom{b+i+r+1}{2}}\tbinom{a-i}{a-r}}{(q^{-1}t^{b+2r+1};t)_{a-r}(q^{-1}t^{b+i+r};t)_{r-i}}.
\end{align*}

Let $\mathrm{RHS}_{\mathrm{coeff}}$ denote the right-hand side of this identity. Substituting the explicit formula 
\begin{align*}
M_{i,j}=q^{a-i}t^{\binom{a-i}{2}+\binom{a+b+i}{2}}\tbinom{a-j}{i-j},   
\end{align*}
and applying the $t$-binomial relation:
\begin{align*}
\tbinom{a-j}{i-j}\tbinom{a-i}{a-r} = \tbinom{a-j}{r-j}\tbinom{r-j}{i-j},
\end{align*}
from Lemma \ref{lem-tbinomexchange}, the expression compacts to:
\begin{align*}
\mathrm{RHS}_{\mathrm{coeff}}&=\frac{(-q)^{r-a}\tbinom{a-j}{r-j}}{t^{\binom{2a+b}{2}}(q^{-1}t^{b+2r+1};t)_{a-r}} \sum_{i=j}^{r}\frac{(-1)^{a-i}t^{\binom{a-i}{2}+\binom{a+b+i+1}{2}-\binom{b+i+r+1}{2}}}{(q^{-1}t^{b+i+r};t)_{r-i}}\tbinom{r-j}{i-j}\\
&=\frac{(-q)^{r-a}\tbinom{a-j}{r-j}}{t^{\binom{2a+b}{2}}(q^{-1}t^{b+2r+1};t)_{a-r}} \sum_{s=0}^{r-j}\frac{(-1)^{a-s-j}t^{\binom{a-j-s}{2}+\binom{a+b+j+s+1}{2}-\binom{b+j+s+r+1}{2}}}{(q^{-1}t^{b+j+s+r};t)_{r-j-s}}\tbinom{r-j}{s}.
\end{align*}
By Lemma \ref{lem-A+-B}, the exponent of $t$ separates neatly into a constant part and an $s$-dependent part: 
\begin{align*}
\binom{a-j-s}{2}&+\binom{a+b+j+s+1}{2}-\binom{b+j+s+r+1}{2}\\
&=\binom{a-j}{2}+\binom{a+b+j+1}{2}-\binom{b+j+r+1}{2}+\binom{s+1}{2}-(r-j)s.
\end{align*}
Pulling the constant powers of $t$ and alternating signs out of the summation yields:
\begin{align*}
\mathrm{RHS}_{\mathrm{coeff}} &=\frac{(-1)^{a-j}(-q)^{r-a}t^{\binom{a-j}{2}+\binom{a+b+j+1}{2}}\tbinom{a-j}{r-j}}{t^{\binom{2a+b}{2}+\binom{b+j+r+1}{2}}(q^{-1}t^{b+2r+1};t)_{a-r}}\sum_{s=0}^{r-j}\frac{(-1)^{s}t^{\binom{s+1}{2}-(r-j)s}}{(q^{-1}t^{b+j+s+r};t)_{r-j-s}}\tbinom{r-j}{s}\\
&=\frac{(-1)^{a-j}(-q)^{r-a}t^{\binom{a-j}{2}+\binom{a+b+j+1}{2}}\tbinom{a-j}{r-j}}{t^{\binom{2a+b}{2}+\binom{b+j+r+1}{2}}(q^{-1}t^{b+2r+1};t)_{a-r}(q^{-1}t^{b+j+r};t)_{r-j}}\\
&\quad \times\sum_{s=0}^{r-j}(-1)^{s}t^{\binom{s+1}{2}-(r-j)s}(q^{-1}t^{b+j+r};t)_{s}\tbinom{r-j}{s}\\
&=\frac{(-1)^{a-j}(-q)^{r-a}t^{\binom{a-j}{2}+\binom{a+b+j+1}{2}}\tbinom{a-j}{r-j}(q^{-1}t^{b+j+r})^{r-j}}{t^{\binom{2a+b}{2}+\binom{b+j+r+1}{2}}(q^{-1}t^{b+2r+1};t)_{a-r}(q^{-1}t^{b+j+r};t)_{r-j}}
\end{align*}
The second step is justified by \eqref{eq-splitting}, and the final step follows directly from Lemma \ref{lem-inversetbinomial}. 

Substituting this evaluation back and applying the algebraic identity
\begin{align*}
\binom{a-r}{2}+\binom{a+b+r}{2}-\binom{a+b+j}{2}=\binom{a-j}{2}+(b+j+r)(r-j),   
\end{align*}
to simplify the exponents of $t$, we perfectly recover the initial expression for the left-hand side, accounting for the common factor $T_{2^{a-r}1^{b+2r}}$. Since the coefficients coincide, the theorem holds.
\end{proof}

Having established the first half of Conjecture \ref{cnj-3}, we defer the formulation of the second half to the next section, where it will be presented in a more general setting.

\section{The action of \texorpdfstring{$\nabla^k$}{nablak} on two-column modified Hall--Littlewood polynomials}

In general, the Schur positivity of $\nabla F[X]$ does not imply that higher powers $\nabla^k F[X]$ will remain Schur positive. As a counterexample, consider the symmetric function
\begin{align*}
F[X]=(q^2t+qt^2+q^2+qt+t^2)s_2+(q^2t^2+q^2t+qt^2)s_{1,1}.
\end{align*}
Direct computation reveals that while the first application of the nabla operator yields a Schur positive function,
\begin{align*}
\nabla F[X]=(q^2t^2+q^2t+qt^2)s_2+ q^2t^2s_{1,1},
\end{align*}
a second application gives $\nabla^2 F[X]=q^2t^2s_{2}-q^3t^3s_{1,1}$, which is clearly not Schur positive due to the negative coefficient.

In this section, we demonstrate that Conjectures \ref{cnj-2} and \ref{cnj-3} remain valid in the two-column case when $\nabla$ is replaced by $\nabla^k$.

Recall that $M$ is an $(a+1)\times (a+1)$ lower triangular matrix whose rows and columns are indexed by the set $\{0,1,\cdots,a\}$, with its $(i,j)$-th entry given by $M_{i,j}=T_{2^{a-i}1^{b+2i}}\tbinom{a-j}{i-j}$ for $i\geq j$, and $0$ otherwise. We denote the $(i,j)$-th entry of its $k$-th power $M^k$ by $M_{i,j}^{(k)}$.

\begin{corollary}\label{cor-nablakomegaHt}
For any nonnegative integers $a$ and $b$, and any positive integer $k$, then
\begin{align*}
\nabla^k\omega \tilde{H}_{2^a1^b}[X;0,t^{-1}]&=\sum_{i=0}^{a}M_{i,0}^{(k)}\omega\tilde{H}_{2^{a-i}1^{b+2i}}[X;0,t^{-1}]\\
&=\sum_{i=0}^{a}\left(\sum_{0=s_0\leq s_1\leq\cdots\leq s_{k}=i}\prod_{\rho=1}^{k}M_{s_{\rho},s_{\rho-1}}\right)\omega\tilde{H}_{2^{a-i}1^{b+2i}}[X;0,t^{-1}].
\end{align*}
In particular, $\nabla^k\omega \tilde{H}_{2^a1^b}[X;0,t^{-1}]$ is Schur positive.
\end{corollary}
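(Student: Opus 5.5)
The plan is to iterate the theorem of Section 5 and induct on $k$. Write $F_j:=\omega\tilde{H}_{2^{a-j}1^{b+2j}}[X;0,t^{-1}]$ for $0\le j\le a$. The previous theorem, applied with the fixed pair $(a,b)$, gives for every $j$ the relation
\begin{align*}
\nabla F_j=\sum_{i=j}^{a}M_{i,j}F_i .
\end{align*}
In other words, on the span of $F_0,\dots,F_a$ the operator $\nabla$ acts by the lower-triangular matrix $M$: the column of coordinates of $\nabla F_j$ is the $j$-th column of $M$. This is an important point. A partition $2^{a-i}1^{b+2i}$ can be written as $2^{a'-j'}1^{b'+2j'}$ for many choices of $(a',b',j')$, and the theorem is stated in terms of such a triple. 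Since all indices range over $\{0,\dots,a\}$ with the same $a$ and $b$, however, we use a single matrix $M$ throughout. The span of the $F_j$ is therefore $\nabla$-stable, and the matrix of $\nabla$ in this basis is $M$.

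We then prove by induction on $k$ the stronger statement $\nabla^k F_j=\sum_{i=j}^a M^{(k)}_{i,j}F_i$ for all $j$; the corollary is the case $j=0$. The base case $k=1$ is the previous theorem. For the inductive step, linearity of $\nabla$ gives
\begin{align*}
\nabla^{k+1}F_j=\sum_{l=j}^{a}M^{(k)}_{l,j}\nabla F_l=\sum_{l=j}^a\sum_{i=l}^a M_{i,l}M^{(k)}_{l,j}F_i=\sum_{i=j}^a M^{(k+1)}_{i,j}F_i,
\end{align*}
and the last step uses $M^{(k+1)}=M\cdot M^{(k)}$ together with lower-triangularity. This also proves part (i) of the Main Theorem.

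The second displayed equality of the corollary is the usual expansion of a matrix power entry as a sum over index chains:
\begin{align*}
M^{(k)}_{i,0}=\sum_{s_1,\dots,s_{k-1}}\prod_{\rho=1}^{k} M_{s_\rho,s_{\rho-1}}
\end{align*}
with $s_0=0$ and $s_k=i$. Because $M_{s,s'}=0$ whenever $s<s'$, only weakly increasing chains $0=s_0\le s_1\le\cdots\le s_k=i$ contribute.

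For Schur positivity, each $M_{i,j}=T_{2^{a-i}1^{b+2i}}\tbinom{a-j}{i-j}$ is a monomial times a $t$-binomial coefficient, and the latter lies in $\mathbb{N}[t]$ by the combinatorial interpretation lemma. Hence $M_{i,j}\in\mathbb{N}[q,t]$, and so every sum of products $M^{(k)}_{i,0}$ lies in $\mathbb{N}[q,t]$. It remains to see that each $F_i$ is Schur positive. By the definition of the modified Kostka--Foulkes polynomials, $\omega\tilde{H}_\mu[X;0,t^{-1}]=\sum_{\lambda}\tilde K_{\lambda,\mu}(t^{-1})s_{\lambda'}$, whose coefficients are sums of powers $t^{-\mathrm{cocharge}(T)}$. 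Taken literally these are Laurent monomials. The intended reading, as in the discussion after the Main Theorem, is positivity with nonnegative integer coefficients, which holds here. If one insists on genuine polynomials, multiply by $t^{n(\mu)}$ and use the bound cocharge $\le n(\mu)$. The products of nonnegative coefficients then give Schur positivity of $\nabla^k F_0$.

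The only real obstacle is making sure the previous theorem applies with one consistent matrix $M$ for every starting index $j$, so that the iteration stays closed in the fixed basis $\{F_i\}$. The theorem as stated already does this, since it is phrased for arbitrary $j$ with $a$ and $b$ fixed, so the remaining steps are purely formal.
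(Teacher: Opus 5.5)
Your proposal is correct and follows the paper's proof. Both argue by induction on $k$ with the Section 5 theorem as base case, apply $\nabla$ to the inductive hypothesis, and recognize $\sum_{j}M_{i,j}M^{(k-1)}_{j,0}$ as $M^{(k)}_{i,0}$. One difference is that you carry every starting index $j$ through the induction, which also yields part (i) of the Main Theorem, whereas the paper only tracks $j=0$. On your polynomiality remark, no external factor is needed. The last factor $M_{i,s_{k-1}}$ of every chain already contains $T_{2^{a-i}1^{b+2i}}=q^{a-i}t^{n(2^{a-i}1^{b+2i})}$, and since $\mathrm{cocharge}\le n(\mu)$, $t^{n(\mu)}\omega\tilde{H}_{\mu}[X;0,t^{-1}]=\sum_{\lambda}t^{n(\mu)}\tilde{K}_{\lambda,\mu}(t^{-1})s_{\lambda'}$ has coefficients in $\mathbb{N}[t]$. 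Hence for $k\geq 1$ the Schur coefficients of $\nabla^k\omega\tilde{H}_{2^a1^b}[X;0,t^{-1}]$ genuinely lie in $\mathbb{N}[q,t]$.
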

\begin{proof}
We proceed by induction on $k$. For the base case $k=1$, the statement reduces to the established rule for a single application of the $\nabla$ operator
\begin{align*}
\nabla\omega\tilde{H}_{2^a1^b}[X;0,t^{-1}]=\sum_{i=0}^{a}M_{i,0}\omega\tilde{H}_{2^{a-i}1^{b+2i}}[X;0,t^{-1}],    
\end{align*}
which holds trivially since $M_{i,0}^{(1)}=M_{i,0}=T_{2^{a-i}1^{b+2i}}\tbinom{a}{i}\in\mathbb{N}[q,t]$. Assume that the identity holds for some integer $k-1\geq 1$, that is
\begin{align*}
\nabla^{k-1}\omega \tilde{H}_{2^a1^b}[X;0,t^{-1}]=\sum_{j=0}^{a}M_{j,0}^{(k-1)}\omega\tilde{H}_{2^{a-j}1^{b+2j}}[X;0,t^{-1}].    
\end{align*}
Applying the $\nabla$ operator to this inductive hypothesis and utilizing its linearity, we obtain
\begin{align*}
\nabla^k\omega \tilde{H}_{2^a1^b}[X;0,t^{-1}]&=\sum_{j=0}^{a}M_{j,0}^{(k-1)}\nabla\big(\omega\tilde{H}_{2^{a-j}1^{b+2j}}[X;0,t^{-1}]\big)\\
&=\sum_{j=0}^{a}M_{j,0}^{(k-1)}\sum_{i=j}^{a}M_{i,j}\omega\tilde{H}_{2^{a-i}1^{b+2i}}[X;0,t^{-1}]\\
&=\sum_{i=0}^{a}\left(\sum_{j=0}^{i}M_{i,j}M_{j,0}^{(k-1)}\right)\omega\tilde{H}_{2^{a-i}1^{b+2i}}[X;0,t^{-1}].
\end{align*}
By the definition of matrix multiplication, the inner sum $\sum_{j=0}^{i} M_{i,j} M_{j,0}^{(k-1)}$ is precisely $M_{i,0}^{(k)}$. This immediately establishes the first equality. Expanding this matrix entry as a sum over all valid intermediate indices $s_{\rho}$ yields the expanded product form. Furthermore, the Schur positivity of the result follows directly from that of the modified Hall--Littlewood polynomials.
\end{proof}

\begin{corollary}
For any nonnegative integers $a$ and $b$, let $\mu=(2^{a}1^{b})$ and $\nu$ be two-column partitions of the same integer. If $\mu\unrhd\nu$, then for any positive integer $k$, the difference
\begin{align*}
\nabla^{k}\omega\tilde{H}_{\mu}[X;0,t^{-1}]-\nabla^{k}\omega\tilde{H}_{\nu}[X;0,t^{-1}]
\end{align*}
is Schur positive.
\end{corollary}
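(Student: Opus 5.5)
Two-column partitions of $n=2a+b$ that are dominated by $\mu=(2^a1^b)$ are exactly $\nu=(2^{a-j}1^{b+2j})$ for some $0\le j\le a$. Indeed, $\nu$ has at most two columns and $\nu\unlhd\mu$ forces $\nu$ to have at most $a$ parts equal to $2$. So I fix such a $j$ and write both terms with the first equality of the Main Theorem (i):
\begin{align*}
\nabla^k\omega\tilde{H}_{\mu}[X;0,t^{-1}]-\nabla^k\omega\tilde{H}_{\nu}[X;0,t^{-1}]=\sum_{i=0}^{a}\bigl(M^{(k)}_{i,0}-M^{(k)}_{i,j}\bigr)\omega\tilde{H}_{2^{a-i}1^{b+2i}}[X;0,t^{-1}],
\end{align*}
with the convention $M^{(k)}_{i,j}=0$ for $i<j$. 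Part (i) for general $j$ follows from the theorem of Section 5 by the same induction as in Corollary \ref{cor-nablakomegaHt}. Each $\omega\tilde{H}_{\lambda}[X;0,t^{-1}]$ is Schur positive. It therefore suffices to prove the coefficientwise inequality $M^{(k)}_{i,j}\le M^{(k)}_{i,0}$ in $\mathbb{N}[q,t]$ for all $i$; that is, the difference should have nonnegative coefficients.

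I first treat $k=1$. Here $M_{i,0}-M_{i,j}=T_{2^{a-i}1^{b+2i}}\bigl(\tbinom{a}{i}-\tbinom{a-j}{i-j}\bigr)$. By the combinatorial interpretation, $\tbinom{n}{k}$ counts partitions in a $k\times(n-k)$ box. The box for $\tbinom{a-j}{i-j}$ has size $(i-j)\times(a-i)$, which sits inside the $i\times(a-i)$ box for $\tbinom{a}{i}$. This gives an injection preserving size, so the difference lies in $\mathbb{N}[t]$. This inequality, $\tbinom{a-j}{i-j}\le\tbinom{a}{i}$, is the whole content of the base case.

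For general $k$, I use the path expansion from Corollary \ref{cor-nablakomegaHt}:
\begin{align*}
M^{(k)}_{i,j}=\sum_{j=s_0\le s_1\le\cdots\le s_k=i}\prod_{\rho=1}^{k}T_{2^{a-s_\rho}1^{b+2s_\rho}}\tbinom{a-s_{\rho-1}}{s_\rho-s_{\rho-1}}.
\end{align*}
Each chain starting at $j$ is compared to a chain starting at $0$. I take $s'_0=0$ and keep $s'_\rho=s_\rho$ for $\rho\ge1$, so the chain remains weakly increasing. The $T$-factors are identical, and only the first binomial changes, from $\tbinom{a-j}{s_1-j}$ to $\tbinom{a}{s_1}$; the $k=1$ inequality shows this is an increase coefficientwise. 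The map on chains is injective, because the $s_\rho$ with $\rho\ge1$ are kept. All other chains from $0$ contribute terms in $\mathbb{N}[q,t]$. Summing then gives $M^{(k)}_{i,j}\le M^{(k)}_{i,0}$.

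The hardest step is the reduction to the index $j$, that is, checking that dominance among two-column partitions really is the total order by the number of $2$'s. The $t$-binomial inequality is then a one-line box-inclusion argument.
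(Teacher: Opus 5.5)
Your proof is correct, and it is organized differently from the paper's.

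The paper first uses transitivity of dominance to reduce to adjacent pairs $(2^{a-j}1^{b+2j})\unrhd(2^{a-j-1}1^{b+2j+2})$. It then proves $M^{(k)}_{i,j}-M^{(k)}_{i,j+1}\in\mathbb{N}[q,t]$ by induction on $k$ via $M^{k}=M\cdot M^{k-1}$, splitting the difference as $M_{i,j}M^{(k-1)}_{j,j}+\sum_{r=j+1}^{i}M_{i,r}\bigl(M^{(k-1)}_{r,j}-M^{(k-1)}_{r,j+1}\bigr)$. Its base case is the $t$-Pascal identity $\tbinom{a-j}{i-j}-\tbinom{a-j-1}{i-j-1}=t^{i-j}\tbinom{a-j-1}{i-j}$.

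You avoid both the reduction and the induction on $k$, comparing index $0$ with an arbitrary $j$ in one step. You use the chain expansion of $M^{(k)}$ and the injection that only resets $s_0$. This is legitimate because $s_1\ge j\ge 0$ and only the $\rho=1$ factor depends on $s_0$. The remaining ingredient is the box inclusion $(a-s_1)^{s_1-j}\subseteq(a-s_1)^{s_1}$.

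Your route gives a closed, manifestly positive expression for the whole difference. It consists of the weights of the chains from $0$ with $s_1<j$, plus, for each chain with $s_1\ge j$, its weight with the first $t$-binomial replaced by the generating function of partitions in the $s_1\times(a-s_1)$ box having more than $s_1-j$ parts. The same argument compares any two indices $j_1\le j_2$ directly. The paper's route instead stays within the matrix-recursion template already used for Corollary \ref{cor-nablakomegaHt}, and yields a recursion for the consecutive differences.

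The step you call the hardest is routine. The $m$-th partial sum of $(2^c1^{n-2c})$ is $\min(2m,c+m,n)$, which is weakly increasing in $c$. The paper uses the same fact implicitly when it reduces to adjacent partitions.
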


\begin{proof}
By the transitivity of the dominance order, it suffices to prove the statement for adjacent two-column partitions, namely $\mu=(2^{a-j}1^{b+2j})$ and $\nu=(2^{a-j-1}1^{b+2j+2})$ for any $0\leq j<a$. As established in the preceding discussion, the action of $\nabla^k$ on these basis elements is governed by the transition matrix $M^k$. Consequently, the difference can be expressed as
\begin{align*}
\nabla^{k}\omega\tilde{H}_{\mu}[X;0,t^{-1}]-\nabla^{k}\omega\tilde{H}_{\nu}[X;0,t^{-1}]= \sum_{i=0}^{a}\left( M_{i,j}^{(k)}-M_{i,j+1}^{(k)}\right)\omega\tilde{H}_{2^{a-i}1^{b+2i}}[X;0,t^{-1}].
\end{align*}
Since the basis elements $\omega\tilde{H}_{\lambda}[X;0,t^{-1}]$ are Schur positive, the result follows if we can show that the coefficient difference $M_{i,j}^{(k)} - M_{i,j+1}^{(k)}$ belongs to $\mathbb{N}[q,t]$. We proceed by induction on $k$.

For the base case $k=1$, the matrix entries $M_{i,j}$ are given explicitly. Utilizing a standard $t$-binomial identity, the difference simplifies as
\begin{align*}
M_{i,j}-M_{i,j+1}&=T_{2^{a-i}1^{b+2i}}\left(\tbinom{a-j}{i-j}-\tbinom{a-j-1}{i-j-1}\right)\\
&=T_{2^{a-i}1^{b+2i}}t^{i-j}\tbinom{a-j-1}{i-j},
\end{align*}
which clearly belongs to $\mathbb{N}[q,t]$.

For the inductive step, assume that $M_{r,j}^{(k-1)}-M_{r,j+1}^{(k-1)} \in \mathbb{N}[q,t]$ for all valid indices $r$. Using the matrix recurrence $M^k = M \cdot M^{k-1}$, we have:
\begin{align*}
M_{i,j}^{(k)}-M_{i,j+1}^{(k)}&=\sum_{r=j}^{i}M_{i,r}M_{r,j}^{(k-1)}-\sum_{r=j+1}^{i}M_{i,r}M_{r,j+1}^{(k-1)}\\
&=M_{i,j}M_{j,j}^{(k-1)}+\sum_{r=j+1}^{i}M_{i,r}\left(M_{r,j}^{(k-1)}-M_{r,j+1}^{(k-1)}\right).
\end{align*}
Since all entries of $M$ are polynomials in $\mathbb{N}[q,t]$, the entries of its power $M^{k-1}$, including $M_{j,j}^{(k-1)}$, also belong to $\mathbb{N}[q,t]$. Furthermore, the term $ M_{r,j}^{(k-1)}-M_{r,j+1}^{(k-1)}$ lies in $\mathbb{N}[q,t]$ by the inductive hypothesis. Because $\mathbb{N}[q,t]$ is closed under addition and multiplication, the entire expression is evaluated as a polynomial with nonnegative coefficients. This establishes that $M_{i,j}^{(k)}-M_{i,j+1}^{(k)}\in\mathbb{N}[q,t]$ for all $k\geq 1$, completing the proof.
\end{proof}

As a direct consequence of Theorem \ref{thm-nablaHt} and Corollary \ref{cor-nablakomegaHt}, we obtain the following result.

\begin{corollary}\label{cor-nablakab}
For any nonnegative integers $a$ and $b$, and any positive integer $k$,
\begin{align*}
(-1)^{a}\nabla^{k}\tilde{H}_{2^a1^b}[X;0,t]&=\sum_{i=0}^{a}c_i^{(k)}\omega\tilde{H}_{2^{a-i}1^{b+2i}}[X;0,t^{-1}]\\
&=\sum_{i=0}^{a}c_i^{(k)}\sum_{\lambda\vdash 2a+b}\sum_{T\in \mathrm{SSYT}(\lambda,2^{a-i}1^{b+2i})}t^{-\mathrm{cocharge}(T)}s_{\lambda'}
\end{align*}
where the coefficients $c_i^{(k)}\in\mathbb{N}[q,t]$ are given by the explicit formula
\begin{align*}
c_i^{(k)}&=q^{a}t^{\binom{2a+b}{2}+\binom{a+b}{2}}\sum_{0=s_0\leq s_1\leq\cdots\leq s_{k-1}=i}\prod_{\rho=1}^{k-1}M_{s_{\rho},s_{\rho-1}}.
\end{align*}
In particular, $(-1)^{a}\nabla^{k}\tilde{H}_{2^a1^b}[X;0,t]$ is Schur positive.
\end{corollary}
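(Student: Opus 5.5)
The plan is to obtain Corollary \ref{cor-nablakab} by combining Theorem \ref{thm-nablaHt} with Corollary \ref{cor-nablakomegaHt}. The single application of $\nabla$ in Theorem \ref{thm-nablaHt} already moves us from the Hall--Littlewood polynomial $\tilde{H}_{2^a1^b}[X;0,t]$ to the transformed basis $\omega\tilde{H}[X;0,t^{-1}]$. The remaining $k-1$ applications then stay inside the span of that basis, and they are governed by the matrix $M$.

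The steps, in order, are as follows.

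\textbf{Case $k=1$.} Here the statement is exactly Theorem \ref{thm-nablaHt}. The product over $\rho$ is empty and the chain forces $s_0=s_0=i=0$. So $c_0^{(1)}=q^at^{\binom{2a+b}{2}+\binom{a+b}{2}}$ and $c_i^{(1)}=0$ for $i>0$, which matches $M^{(0)}=I$.

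\textbf{Case $k\ge 2$.} We write $\nabla^k=\nabla^{k-1}\circ\nabla$ and use linearity. By Theorem \ref{thm-nablaHt},
\begin{align*}
(-1)^a\nabla^k\tilde{H}_{2^a1^b}[X;0,t]=q^{a}t^{\binom{2a+b}{2}+\binom{a+b}{2}}\,\nabla^{k-1}\omega\tilde{H}_{2^a1^b}[X;0,t^{-1}].
\end{align*}
We then apply Corollary \ref{cor-nablakomegaHt} with exponent $k-1\ge 1$. This gives the expansion with coefficients $q^at^{\binom{2a+b}{2}+\binom{a+b}{2}}M^{(k-1)}_{i,0}$. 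Expanding the matrix power as a sum over chains $0=s_0\le s_1\le\cdots\le s_{k-1}=i$ yields the stated formula for $c_i^{(k)}$.

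\textbf{Second equality.} We substitute the combinatorial formula $\omega\tilde{H}_{\mu}[X;0,t^{-1}]=\sum_{\lambda}\sum_{T\in\mathrm{SSYT}(\lambda,\mu)}t^{-\mathrm{cocharge}(T)}s_{\lambda'}$, which holds because $\omega s_\lambda=s_{\lambda'}$.

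\textbf{Positivity.} Each $M_{i,j}=T_{2^{a-i}1^{b+2i}}\tbinom{a-j}{i-j}$ lies in $\mathbb{N}[q,t]$, so products and sums of such entries do as well. Hence $c_i^{(k)}\in\mathbb{N}[q,t]$. Schur positivity then follows from that of $\omega\tilde{H}_\mu[X;0,t^{-1}]$.

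There is one subtlety here. The expression $\omega\tilde{H}[X;0,t^{-1}]$ carries negative powers of $t$, so its coefficients are in $\mathbb{N}[t^{-1}]$ rather than $\mathbb{N}[t]$. For the positivity claim I would point out that the prefactor $t^{\binom{2a+b}{2}+\binom{a+b}{2}}$ clears these powers. The maximum cocharge for weight $2^{a-i}1^{b+2i}$ is $n(\mu)=\binom{a+b+i}{2}+\binom{a-i}{2}$. The factor $T_{2^{a-i}1^{b+2i}}$ sits inside the last matrix entry $M_{i,s_{k-2}}$ of each chain, and it supplies the needed power whenever $k\ge2$.

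Apart from that, the argument is bookkeeping. The only real checks are the index conventions: $M^{(k-1)}$ versus the chain of length $k-1$, and the degenerate case $k=1$.
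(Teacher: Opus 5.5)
Your proposal is correct and takes the paper's route: apply $\nabla^{k-1}$ to Theorem \ref{thm-nablaHt}, then invoke Corollary \ref{cor-nablakomegaHt} with exponent $k-1$, and handle $k=1$ through $M^{(0)}_{i,0}=\delta_{i,0}$. Your remark that the negative powers $t^{-\mathrm{cocharge}(T)}$ are cleared is a valid detail the paper leaves implicit. In fact the prefactor alone already suffices for every $i$, since $\mathrm{cocharge}(T)\le n(2^{a-i}1^{b+2i})\le\binom{2a+b}{2}$.
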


Specializing to $k=1$, the empty product convention dictates that $M_{i,0}^{(0)}=\delta_{i,0}$. Consequently, we see that $c_{i}^{(1)}=0$ for $i>0$, while $c_{0}^{(1)}=q^{a}t^{\binom{2a+b}{2}+\binom{a+b}{2}}$. Thus, Corollary \ref{cor-nablakab} reduces to Theorem \ref{thm-nablaHt}.

\bigskip
\noindent
\textbf{Acknowledgements:} We would like to thank Michele D'Adderio for valuable conversations.

\appendix
\section{Alternative proof of Theorem \ref{thm-nablaHt}}
This appendix presents a concise, self-contained proof of Theorem \ref{thm-nablaHt}. To avoid any conflict with our earlier notation, we adopt a slight modification of the conventions used in \cite{MR3787405}. Specifically, we define the linear operator $\nabla': \Lambda\to\Lambda$ by 
\begin{align*}
\nabla' \tilde{H}_{\mu}[X;q,t]:=(-1)^{|\mu|}T_{\mu}\tilde{H}_{\mu}[X;q,t].
\end{align*}
For $F[X;q,t]\in \Lambda^{(n)}$, define 
\begin{align*}
\overline{\omega}F[X;q,t]:=F[-X;q^{-1},t^{-1}]=(-1)^{n}\omega F[X;q^{-1},t^{-1}].
\end{align*}

To state some of our results, we need to use the language of plethystic substitution. Let $E(t_{1},t_{2},t_{3},\dots)$ be a formal series of rational functions in the parameters $t_{1},t_{2},t_{3},\cdots$. Define the \emph{plethystic substitution} of $E$ into $p_{k}$, denoted by $p_{k}[E]$, as
\begin{align*}
p_{k}[E]=E(t_{1}^{k},t_{2}^{k},\dots).
\end{align*}
We refer the reader to \cite{MR2371044}*{Section 1} for a detailed overview of the usual plethystic substitution in symmetric functions. We recall the definitions of the \emph{creation operators} $\mathbb{H}_m$ and $\mathbb{B}_m$ for $m\in\mathbb{Z}$:
\begin{align*}
\mathbb{H}_{m}F[X] &:= F[X-(1-q)z^{-1}]\mathrm{Exp}[zX]|_{z^{m}},\\
\mathbb{B}_{m}F[X] &:= F[X-(q-1)z^{-1}]\mathrm{Exp}[-zX]|_{z^{m}}.
\end{align*}
where $\mathrm{Exp}[X]:=\sum_{i\geq 0}h_{i}[X]$ is the plethystic exponential. See \cite{MR2957232} for related variations. 

Note that these operators satisfy the relation
\begin{align*}
\mathbb{B}_m=(-1)^{m}\omega\mathbb{H}_m\omega.    
\end{align*}
Furthermore, it has been shown that the operators $\mathbb{H}_m$ act as creation operators for a family of Hall--Littlewood polynomials, typically denoted by $Q_{\mu'}[X;q]:=q^{n(\mu)}\tilde{H}_{\mu}[X;0,q^{-1}]$.

\begin{lemma}\cite{MR1112626}\label{lem-HHtoHt}
Let $\mu=(\mu_1,\mu_2,\dots,\mu_{\ell})$ be a partition. Then
\begin{align}
\mathbb{H}_{\mu_1}\mathbb{H}_{\mu_2}\cdots\mathbb{H}_{\mu_{\ell}}(1)=q^{n(\mu)}\tilde{H}_{\mu}[X;0,q^{-1}].
\end{align}
\end{lemma}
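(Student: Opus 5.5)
We would prove the lemma by computing the generating function of products of the operators $\mathbb{H}_m$ and recognizing the result as the raising-operator formula for Hall--Littlewood functions. Write $\mathbb{H}(z):=\sum_{m\in\mathbb{Z}}z^{m}\mathbb{H}_m$, so that $\mathbb{H}(z)F[X]=F[X-(1-q)z^{-1}]\,\mathrm{Exp}[zX]$ as a formal Laurent series in $z$.

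\textbf{Step 1: compute the product.} We compute $\mathbb{H}(z_1)\mathbb{H}(z_2)\cdots\mathbb{H}(z_\ell)(1)$ by induction on $\ell$. The only input needed is the plethystic rule
\begin{align*}
\mathrm{Exp}\big[z_j(X-(1-q)z_i^{-1})\big]=\mathrm{Exp}[z_jX]\,\mathrm{Exp}\big[-(1-q)z_j/z_i\big]=\mathrm{Exp}[z_jX]\,\frac{1-z_j/z_i}{1-qz_j/z_i}.
\end{align*}
Applying $\mathbb{H}(z_1)$ to $\prod_{j\ge 2}\mathrm{Exp}[z_jX]\cdot(\text{scalar factors})$ therefore produces the new factors $\prod_{j\geq 2}\frac{1-z_j/z_1}{1-qz_j/z_1}$. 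Iterating gives
\begin{align*}
\mathbb{H}(z_1)\cdots\mathbb{H}(z_\ell)(1)=\prod_{1\le i<j\le \ell}\frac{1-z_j/z_i}{1-qz_j/z_i}\;\prod_{i=1}^{\ell}\mathrm{Exp}[z_iX].
\end{align*}
Here each rational factor is expanded as a power series in $z_j/z_i$ for $i<j$. This expansion is the one forced by the order of composition, since the innermost operator acts first.

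\textbf{Step 2: extract coefficients.} Since $\prod_i\mathrm{Exp}[z_iX]=\sum_{\alpha}h_\alpha z^\alpha$ over compositions $\alpha$ (with $h_k=0$ for $k<0$), taking the coefficient of $z_1^{\mu_1}\cdots z_\ell^{\mu_\ell}$ gives
\begin{align*}
\mathbb{H}_{\mu_1}\cdots\mathbb{H}_{\mu_\ell}(1)=\prod_{i<j}\frac{1-R_{ij}}{1-qR_{ij}}\,h_{\mu}.
\end{align*}
The $R_{ij}$ are the usual raising operators on compositions. A monomial $z_j/z_i$ in the generating function shifts the extracted index, which is exactly the effect of $R_{ij}$ on $h_\alpha$.

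\textbf{Step 3: identify the result.} It remains to identify the right-hand side with $q^{n(\mu)}\tilde{H}_\mu[X;0,q^{-1}]$, namely the transformed Hall--Littlewood function $Q'_\mu[X;q]=Q_\mu[X/(1-q);q]$. Macdonald's raising-operator formula (Chapter III) states $Q_\mu=\prod_{i<j}\frac{1-R_{ij}}{1-qR_{ij}}\,q_\mu$, where $q_r[X]=h_r[(1-q)X]$. The plethysm $X\mapsto X/(1-q)$ commutes with raising operators because they act only on indices, and it sends $q_\mu$ to $h_\mu$. This yields exactly the formula of Step 2. Finally, the standard relation $Q'_\mu[X;q]=q^{n(\mu)}\tilde{H}_\mu[X;0,q^{-1}]$ follows from $\tilde{K}_{\lambda\mu}(q)=q^{n(\mu)}K_{\lambda\mu}(q^{-1})$, which is the $q=0$ specialization of \eqref{eq-qtKostka} with the roles of the letters adjusted.

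\textbf{Main obstacle.} We expect the delicate part to be the bookkeeping in Steps 2--3, not the algebra of Step 1. Two points need care. First, the formal expansion region of $\prod_{i<j}$ must match Macdonald's convention for raising operators, so that Step 2 uses the same expansion as his formula. Second, the raising-operator expression must be interpreted correctly when it produces non-partition compositions. For this we would invoke Macdonald's straightening, or equivalently check that both sides obey the same commutation relations $\mathbb{H}_{m}\mathbb{H}_{n}$ versus $\mathbb{H}_{n}\mathbb{H}_{m}$. As a fallback, we would verify the identity by induction on $\ell$, using Jing's vertex-operator characterization together with the triangularity of both sides in the Schur basis.
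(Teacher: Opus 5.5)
Your argument is correct: the identity $\mathbb{H}(z_1)\cdots\mathbb{H}(z_\ell)(1)=\prod_{i<j}\frac{1-z_j/z_i}{1-qz_j/z_i}\prod_i\mathrm{Exp}[z_iX]$, followed by coefficient extraction and Macdonald's raising-operator formula for $Q_\mu$ transported by $X\mapsto X/(1-q)$, yields $Q'_\mu[X;q]=q^{n(\mu)}\tilde{H}_\mu[X;0,q^{-1}]$. This is essentially the standard argument behind the result, which the paper does not prove but simply cites from \cite{MR1112626}.

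The obstacle you anticipate does not arise. Macdonald's formula is itself the coefficient of $u^\mu$ in $\prod_i\bigl(\sum_{r}q_r u_i^r\bigr)\prod_{i<j}\frac{1-u_j/u_i}{1-qu_j/u_i}$, expanded in the same region $|u_j/u_i|<1$ for $i<j$. It is also read with the same convention on arbitrary integer vectors (factors with negative index vanish). So no straightening and no fallback induction are needed.
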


We now briefly recall the formalism of the Dyck path algebra. 

For a polynomial $P$ in the variables $u$ and $v$, define the operator $\Upsilon_{uv}$ as
\begin{align*}
(\Upsilon_{uv}P)(u,v):=\frac{(q-1)uP(u,v)+(v-qu)P(v,u)}{v-u}.
\end{align*}
Consider the space $V_k:=\Lambda[y_1,y_2,\cdots,y_k]=\Lambda\otimes \mathbb{Q}[y_1,\cdots,y_k]$ and let $V_{*}:=\oplus_{k\geq 0} V_k$. On these spaces, we define the operators
\begin{align*}
T_i:=\Upsilon_{y_{i}y_{i+1}}: V_{k}\to V_{k} \quad \text{for } 1\leq i\leq k-1.
\end{align*}
Furthermore, we introduce the \emph{raising and lowering operators} $d_{+}: V_k\to V_{k+1}$ and $d_{-}: V_k\to V_{k-1}$. For any $F[X]\in V_k$, their actions are explicitly given by
\begin{align*}
(d_{+}F)[X]:&=T_{1}T_{2}\cdots T_{k}(F[X+(q-1)y_{k+1}]),\\
(d_{-}F)[X]:&=-F[X-(q-1)y_k]\sum_{i\geq 0}(-1/y_k)^{i}e_{i}[X]|_{y_{k}^{-1}}.
\end{align*}

The \emph{Dyck path algebra} $\mathbb{A}=\mathbb{A}_q$ is the path algebra of the quiver with vertex set $\mathbb{Z}_{\geq 0}$, arrows $d_{+}$ from $i$ to $i+1$, arrows $d_{-}$ from $i+1$ to $i$ for $i\in\mathbb{Z}_{\geq 0}$, and loops $T_1,T_2,\cdots,T_{k-1}$ from $k$ to $k$ subject to some relations which is listed in \cite{MR3787405}*{Definition 5.1}. Let $\mathbb{A}^{*}=\mathbb{A}_{q^{-1}}$ and label the corresponding generators by $d_{+}^{*}$, $d_{-}^{*}$, ${T_{i}^{*}}$ and $e_{i}^{*}$. Denote by $z_i$ the image of $y_i$ under the isomorphism from $\mathbb{A}$ to $\mathbb{A}^{*}$ which sends generators to generators, and which is antilinear with respect to $q\mapsto q^{-1}$.

To facilitate our subsequent proofs, we record several key commutation relations and properties of the automorphism $\mathcal{N}$.

\begin{lemma}\cite{MR3787405}*{Theorem 6.1}\label{lem-d+yi}
When acting on $V_{k}$, the following commutation relations hold:
\begin{align}
z_{1}d_{+}=-y_{1}d_{+}^{*}tq^{k+1},\quad z_{i+1}d_{+}=d_{+}z_{i}, \quad  d_{+}^{*}y_i=y_{i+1}d_{+}^{*}.
\end{align}
\end{lemma}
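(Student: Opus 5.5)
This lemma is quoted from \cite{MR3787405}*{Theorem 6.1}. The plan is to recover it from the structure theory of the Dyck path algebra rather than by computing directly with the explicit formulas for $d_\pm$ and $T_i$. The organizing tool is the automorphism $\mathcal{N}$ of $V_*$ from \cite{MR3787405}. It is antilinear in $q\mapsto q^{-1}$ and intertwines $\mathbb{A}$ with $\mathbb{A}^*$; concretely $\mathcal{N}T_i=(T_i^{*})\mathcal{N}$, $\mathcal{N}d_-=d_-^*\mathcal{N}$, and $\mathcal{N}y_i=z_i\mathcal{N}$. I would realize $\mathcal{N}$ on $V_k$ as a normalized $\overline{\omega}$-type map composed with the substitution $y_i\mapsto y_i^{-1}$ and a power of $t$ coming from $\nabla'$. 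From that realization, the three intertwining relations above follow from the symmetry of the defining formulas, together with the identity $\mathbb{B}_m=(-1)^m\omega\mathbb{H}_m\omega$ at the level of $V_0$.

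The third relation, $d_+^*y_i=y_{i+1}d_+^*$, I would verify directly from the definition of $d_+$ (with $q$ replaced by $q^{-1}$). The map $F\mapsto F[X+(q^{-1}-1)y_{k+1}]$ does not involve $y_i$. The product $T_1^*\cdots T_k^*$ transports multiplication by $y_i$ to multiplication by $y_{i+1}$, using the affine Hecke relation $T_i^{*}y_{i}=y_{i+1}T_i^{*}$ corrected by lower-order terms. These corrections telescope in the ordered product because each $y_j$ with $j\neq i,i+1$ commutes with $T_i$. This step is a routine check on the operators $\Upsilon_{uv}$.

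The second relation, $z_{i+1}d_+=d_+z_i$, I would obtain by applying $\mathcal{N}$ to the third relation. This needs the companion identity $\mathcal{N}d_+^{*}=d_+\mathcal{N}$ up to the appropriate normalization. In \cite{MR3787405} this identity comes from expressing $d_+$ through $d_-$, $y_1$ and the $T_i$ via the relation $q\,d_-y_{1}d_+ \propto$ (commutator of $d_-,d_+$), which $\mathcal{N}$ visibly preserves.

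The first relation, $z_1d_+=-y_1d_+^*tq^{k+1}$, is where I expect the real difficulty. Here the $q,t$-dependence enters through the normalization of $\mathcal{N}$, namely the factor of $\nabla'$ that distinguishes $\mathbb{A}_{q,t}$ from $\mathbb{A}_q$. My approach would be to use the defining relation expressing $y_1$ on $V_{k+1}$ as
\[
\frac{1}{q^{k}(1-q)}\,(d_-d_+-d_+d_-)\,T_1\cdots T_k ,
\]
and the analogous expression for $z_1$ with starred operators. I would then apply $\mathcal{N}$ and compare the two sides on the spanning vectors obtained by applying words in $d_\pm$ to $1\in V_0$, tracking the eigenvalues of $\nabla'$ to produce the scalar $-tq^{k+1}$. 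If this bookkeeping gets out of hand, the fallback is a direct verification: check the identity on $V_0$ using Lemma \ref{lem-HHtoHt}, then induct on $k$ using the second and third relations to move $z_1$ and $y_1$ past $d_+$.
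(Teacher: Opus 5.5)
There is a genuine gap at the foundation: you are using the wrong $\mathbb{A}_{q^{-1}}$-action on $V_*$. In \cite{MR3787405} the starred generators do not act by the formulas for $T_i,d_\pm$ with $q$ replaced by $q^{-1}$. Instead they act by $T_i^*=T_i^{-1}$, $d_-^*=d_-$, and $d_+^*F=\gamma\big(F[X+(q-1)y_{k+1}]\big)$, where $\gamma$ is the $t$-twisted cyclic shift $y_i\mapsto y_{i+1}$ $(i\le k)$, $y_{k+1}\mapsto ty_1$. This twist is where $t$ enters, not a normalization of $\mathcal{N}$. With it the third relation is immediate, since $\gamma(y_iG)=y_{i+1}\gamma(G)$ for $i\le k$. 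In your model the third relation is simply false. For $k=1$, $d_+^*(y_1\cdot 1)=T_1^*(y_1)=y_2+(1-q^{-1})y_1$, whereas $y_2\,d_+^*(1)=y_2$, so no telescoping of lower-order Hecke terms can rescue it. Likewise, no operator in your model involves $t$, so the first relation, whose right-hand side carries the factor $t$, cannot hold. With the correct $d_+^*$, the first relation becomes an honest plethystic computation. For instance, for $k=0$ one has $z_1=\frac{1}{q^{-1}-1}[d_+^*,d_-]$ on $V_1$ (the image of $y_1=\frac{1}{q-1}[d_+,d_-]$ on $V_1$) and $d_-d_+F=p_1F$. Hence $z_1d_+F=-qty_1F[X+(q-1)ty_1]=-qty_1\,d_+^*F$.

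The use of $\mathcal{N}$ is circular, and the realization you propose for it is wrong. The $z_i$ in $\mathcal{N}y_i=z_i\mathcal{N}$ are built from $d_+^*$. Moreover, $\mathcal{N}$ (Theorem 7.4 of \cite{MR3787405}, Lemma \ref{lem-mathcalN} here) is constructed on top of the $\mathbb{A}_{q,t}$-action whose relations are precisely the content of Theorem 6.1. So you cannot get the second relation by conjugating the third with $\mathcal{N}$, nor the first by tracking eigenvalues of $\nabla'$. There is also no cheap closed form for $\mathcal{N}$. On $V_0$ it equals $\nabla'\overline{\omega}$, and $\nabla'$ acts by the $\mu$-dependent scalars $(-1)^{|\mu|}T_\mu$ on the $\tilde{H}_\mu$, so it is not a power of $t$. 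In addition, $y_i\mapsto y_i^{-1}$ does not even preserve $V_k$. Your fallback does not work either, for three reasons. The second relation concerns $z_{i+1}$ and gives no way to move $z_1$ past $d_+$. Also $d_+(V_{k-1})$ does not span $V_k$ (e.g.\ $y_1\notin d_+(V_0)$), so a check on $V_0$ does not propagate by induction on $k$. Finally, Lemma \ref{lem-HHtoHt} plays no role in these relations. A workable proof is a direct verification with the explicit operators above, writing $z_1$ on $V_{k+1}$ as the $q\mapsto q^{-1}$, $T_i\mapsto T_i^{-1}$ image of the commutator formula for $y_1$.
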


\begin{lemma}\cite{MR3787405}*{Theorem 7.4}\label{lem-mathcalN}
There exists a unique antilinear degree-preserving automorphism from $V_{*}$ to $V_{*}$ satisfying
\begin{align}
\mathcal{N}(1)=1, \quad \mathcal{N}T_{i}=T_{i}^{-1},\quad \mathcal{N}d_{-}=d_{-}\mathcal{N},\quad \mathcal{N}d_{+}=d_{+}^{*}\mathcal{N},\quad \mathcal{N}y_{i}=z_{i}\mathcal{N}.
\end{align}
Moreover, $\mathcal{N}$ is an involution (that is $\mathcal{N}^2=id$), and it restricts to $\mathcal{N}=\nabla'\overline{\omega}$ on $V_{0}=\Lambda$.
\end{lemma}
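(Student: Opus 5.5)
Since this lemma is quoted from \cite{MR3787405}, the plan below reconstructs the argument there rather than a new one. The idea is to obtain $\mathcal{N}$ as the map induced on a cyclic module by an antilinear algebra involution.

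\textbf{The algebra and its involution.} Let $\mathbb{A}_{q,t}$ be the algebra generated by $d_\pm$, $T_i$, $y_i$ and $d_+^*$ (equivalently the $z_i$), subject to the relations of \cite{MR3787405}*{Definition 5.1} together with the relations of Lemma \ref{lem-d+yi}. The first thing to check is that the antilinear assignment
\begin{align*}
\sigma:\quad q\mapsto q^{-1},\quad T_i\mapsto T_i^{-1},\quad d_-\mapsto d_-,\quad d_+\mapsto d_+^*,\quad y_i\mapsto z_i
\end{align*}
respects every defining relation. This amounts to noting that the relation list is symmetric under interchanging $\mathbb{A}$ and $\mathbb{A}^*$. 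For example, $z_1d_+=-y_1d_+^*tq^{k+1}$ is sent to an equivalent form of itself once $q$ is inverted and the relation is rearranged. It then follows that $\sigma$ is an antilinear automorphism with $\sigma^2=\mathrm{id}$.

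\textbf{Uniqueness.} This step uses cyclicity: $V_*$ is generated by $1\in V_0$ under $\mathbb{A}_{q,t}$. Applying words in $d_+$ and $d_-$ to $1$ already yields the characteristic functions of Dyck paths, and these span $\Lambda$. Multiplication by the $y_i$ together with $d_+$ then fills out each $V_k$. Consequently any $\mathcal{N}$ satisfying the stated intertwining relations and $\mathcal{N}(1)=1$ is forced to be $\mathcal{N}(X\cdot 1)=\sigma(X)\cdot 1$, which proves uniqueness.

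\textbf{Existence.} Define $\mathcal{N}(X\cdot 1):=\sigma(X)\cdot 1$. The only issue is well-definedness: we must show $\sigma(\mathrm{Ann}(1))\subseteq \mathrm{Ann}(1)$. I would identify $\mathrm{Ann}(1)$ as the left ideal generated by $d_-|_{V_0}$ together with the relations saying that $1$ has no $y$-variables. Then $\sigma$ visibly preserves these generators, since $\sigma(d_-)=d_-$. The properties follow formally: degree preservation, because $\sigma$ preserves degrees of generators; the involution property, because $\sigma^2=\mathrm{id}$; and the intertwining relations, by construction. I expect this annihilator computation to be the main obstacle. If a direct description of $\mathrm{Ann}(1)$ proves elusive, the fallback is to construct the operator concretely on each $V_k$, for instance via a plethystic formula twisted by $\nabla$, and verify the relations generator by generator.

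\textbf{Restriction to $V_0$.} Finally I would show $\mathcal{N}|_{V_0}=\nabla'\overline{\omega}$. Both sides are antilinear and fix $1$. Because $V_0$ is spanned by elements of the form $d_-^k\,(\text{word})\,1$, it suffices to show that $\nabla'\overline{\omega}$ intertwines $d_-$ with itself, and that on the $d_+$ part it matches the $d_+\leftrightarrow d_+^*$ swap. I would check this on the modified Macdonald basis: $\overline{\omega}$ acts on $\tilde{H}_\mu$ through \eqref{equ-HtwHt}, and $\nabla'$ through its eigenvalues $(-1)^{|\mu|}T_\mu$. These are the Pieri-type computations underlying Carlsson--Mellit's conjugation formulas.
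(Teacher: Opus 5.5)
The paper does not prove this lemma; it quotes it from \cite{MR3787405}, so your reconstruction has to stand on its own. Your uniqueness argument is fine in outline. A clean way to see that $1$ generates $V_0$ is the identity $d_-^{\ell}y_1^{r_1}\cdots y_\ell^{r_\ell}d_+^{\ell}(1)=(-1)^{\ell}\mathbb{B}_{r_1+1}\cdots\mathbb{B}_{r_\ell+1}(1)$ quoted in the proof of Lemma \ref{lem-dplusdminustoHL}, combined with Lemma \ref{lem-HHtoHt}.

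There is, however, a concrete error in the first step: $\sigma$ cannot be antilinear in $q$ alone. If $t$ is fixed, applying $\sigma$ to $z_1d_+=-tq^{k+1}y_1d_+^*$ gives $y_1d_+^*=-tq^{-k-1}z_1d_+$, i.e.\ $z_1d_+=-t^{-1}q^{k+1}y_1d_+^*$. Together with the original relation this forces $(t-t^{-1})y_1d_+^*=0$, which already fails on $1\in V_0$. The relations are symmetric only under $(q,t)\mapsto(q^{-1},t^{-1})$. This is also what $\overline{\omega}$ does, and what the appendix uses when it replaces $t^{-a}$ by $t^{a}$.

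The genuine gap is existence, which is the entire content of the theorem.
\begin{itemize}
\item Your description of $\mathrm{Ann}(1)$ generates nothing: no $d_-$ arrow leaves vertex $0$, and $V_0$ carries no $y$-variables. Yet $\mathrm{Ann}(1)$ contains, for instance, $d_+-d_+^*$ at vertex $0$ (both send $1$ to $1$), together with every linear relation among the vectors $X\cdot 1$.
\item More to the point, $\sigma(\mathrm{Ann}(1))\subseteq\mathrm{Ann}(1)$ is equivalent to the existence of $\mathcal{N}$: if $\mathcal{N}$ exists, then $\sigma(X)\cdot 1=\mathcal{N}(X\cdot 1)$. So saying that $\sigma$ visibly preserves the annihilator merely restates the claim.
\item Your restriction step has the same problem. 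The condition that $\nabla'\overline{\omega}$ intertwines $d_-$ with itself is not a statement about $V_0$, since $d_-\colon V_1\to V_0$ and $d_+\colon V_0\to V_1$. It presupposes $\mathcal{N}$ on $V_1,V_2,\dots$, which you never construct.
\item Even granting existence, the identification $\mathcal{N}|_{V_0}=\nabla'\overline{\omega}$ cannot come from the relations of $\mathbb{A}_{q,t}$ alone, because it encodes values of $\nabla$. By the quoted identity, $d_-y_1d_+(1)=-e_2$. Lemma \ref{lem-d+yi} with $k=0$, together with $d_+(1)=d_+^*(1)$, gives $d_-z_1d_+^*(1)=-tq\,d_-y_1d_+(1)=qte_2$. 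So the lemma contains the assertion $\nabla'\overline{\omega}(-e_2)=\nabla(-h_2)=qte_2$, which is a fact about $\tilde{H}_{2}$ and $\tilde{H}_{11}$. In higher degrees it contains the corresponding Pieri-type (Garsia--Haiman--Tesler style) conjugation identities for $\nabla$.
\end{itemize}

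What is missing is an actual construction of $\mathcal{N}$ on every $V_k$ (your fallback), together with a proof of $\mathcal{N}d_-=d_-\mathcal{N}$ that uses this Macdonald-polynomial input. Without it, nothing beyond uniqueness has been established.
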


To apply these relations, we first express the two-column modified Hall--Littlewood polynomials in terms of raising and lowering operators.

\begin{lemma}\label{lem-dplusdminustoHL}
We have
\begin{align*}
d_{-}^{a+b}y_1y_2\cdots y_a (d_{+}^{*})^{a+b}(1)=(-1)^{a}q^{n(2^a1^b)}\omega\tilde{H}_{2^a1^b}[X;0,q^{-1}].
\end{align*}
\end{lemma}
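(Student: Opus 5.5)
We will prove the lemma by first evaluating $(d_{+}^{*})^{a+b}(1)$, then the effect of multiplying by $y_1\cdots y_a$ and applying $d_{-}^{a+b}$, and finally matching the outcome with Lemma \ref{lem-HHtoHt}.

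\emph{Step 1: the lift $(d_+^*)^{a+b}(1)$.} Since $d_+^*$ is the generator of $\mathbb{A}_{q^{-1}}$, it is given by the same formula as $d_+$ with $q$ replaced by $q^{-1}$. So $(d_+^*)^{m}(1)$ is obtained by repeatedly substituting $X\mapsto X+(q^{-1}-1)y_{k+1}$ and symmetrizing with the operators $T^*_i$. Because the constant $1$ is unaffected by the plethystic shift, we expect $(d_+^*)^m(1)=1\in V_m$. The justification is that each $\Upsilon$ fixes polynomials symmetric in $u,v$: indeed $((q-1)u+v-qu)/(v-u)=1$, so every $T_i$ fixes symmetric elements.

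\emph{Step 2: the lowering operators.} We must now compute $d_-^{a+b}(y_1\cdots y_a)$. Here $d_-$ removes the last variable $y_k$ by the plethystic substitution $X\mapsto X-(q-1)y_k$, followed by a constant-term extraction against $\mathrm{Exp}$-type series in $y_k^{-1}$.
\begin{itemize}
\item The first $b$ applications of $d_-$ act on $y_{a+b},\dots,y_{a+1}$, which do not occur in the monomial. Each of these should act like a creation operator $\mathbb{B}_0$ (or $\mathbb{H}_0$ after $\omega$), with the factor $y_k^0$ selecting the $z^0$ coefficient.
\item The next $a$ applications act on $y_a,\dots,y_1$, each of which occurs to the first power. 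Extracting the coefficient of $y_k^{-1}$ from $y_k\cdot(\cdots)$ amounts to taking the $z^{1}$-type coefficient, i.e.\ applying $\mathbb{B}_1$ up to sign.
\end{itemize}
Concretely, we will check directly from the definitions that, for $F\in\Lambda$ and any polynomial in $y_1,\dots,y_{k-1}$ that is not involved,
\begin{align*}
d_-\bigl(y_k^{m}F\bigr)=-(-1)^{m+1}\,\mathbb{B}_{m+1}^{\sharp}F,
\end{align*}
where $\mathbb{B}^\sharp$ is the appropriate normalization of $\mathbb{B}$ with $z\leftrightarrow y_k^{-1}$. This gives
\begin{align*}
d_-^{a+b}(y_1\cdots y_a)=\pm\,\mathbb{B}^{\sharp}_{2}\cdots\mathbb{B}^{\sharp}_2\,\mathbb{B}^\sharp_1\cdots\mathbb{B}^\sharp_1(1),
\end{align*}
with $a$ operators of index $2$ on the left and $b$ of index $1$. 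One issue must be settled here: the $y_i$ with $i\le a$ stay inert while the later variables are removed, because $d_-$ only touches $y_k$ and $X$.

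\emph{Step 3: identify the result.} Using $\mathbb{B}_m=(-1)^m\omega\mathbb{H}_m\omega$ and Lemma \ref{lem-HHtoHt} with $\mu=(2^a1^b)$, we get $\mathbb{H}_2^a\mathbb{H}_1^b(1)=q^{n(2^a1^b)}\tilde H_{2^a1^b}[X;0,q^{-1}]$. Applying $\omega$ and collecting signs, namely $(-1)^{2a+b}$ from the $\mathbb{B}$–$\mathbb{H}$ relation together with the $-1$ from each $d_-$, should produce exactly $(-1)^a$.

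\emph{Main obstacle.} The main obstacle is Step 2: matching the precise form of $d_-$, in particular its plethystic sign $-(q-1)$ and the $e_i[X](-1/y_k)^i$ series, with $\mathbb{B}_m$ or $\mathbb{H}_m$ including the correct index shift. I would first test the case $a=0,b=1$, where the claim reads $d_-(1)=\omega\tilde H_1$, and the case $a=1,b=0$, where it reads $d_-(y_1)=-\omega\tilde H_2[X;0,q^{-1}]$. Once these small cases fix the normalization, the general statement follows by induction on the number of $d_-$ applications.
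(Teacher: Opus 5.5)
Your route is the paper's. The paper quotes, from the proof of Carlsson--Mellit's Lemma~5.6, the identity $d_-^m y_1^{r_1}\cdots y_{k+m}^{r_{k+m}}d_+^{k+m}(1)=(-1)^m y_1^{r_1}\cdots y_k^{r_k}\mathbb{B}_{r_{k+1}+1}\cdots\mathbb{B}_{r_{k+m}+1}(1)$ and specializes it to $(-1)^{a+b}\mathbb{B}_2^a\mathbb{B}_1^b(1)$. It then finishes with $\mathbb{B}_m=(-1)^m\omega\mathbb{H}_m\omega$ and Lemma~\ref{lem-HHtoHt}, exactly as in your Step~3. Deriving that identity yourself from the definition of $d_-$ is a reasonable, more self-contained choice.

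The gap is Step~2, which is the only real content of the lemma: it is never carried out, and what you write is inconsistent.
\begin{itemize}
\item Your bullets assign $\mathbb{B}_0$ to the $b$ steps removing absent variables and $\mathbb{B}_1$ to the $a$ steps removing $y_k^1$. This is off by one: it would produce a function of degree $a$, not $2a+b$. The error comes from calling $d_-$ a constant-term extraction; it actually extracts the coefficient of $y_k^{-1}$, and that is exactly what shifts the index.
\item Your displayed formula has the right index, but its sign $-(-1)^{m+1}$ involves an undefined $\mathbb{B}^\sharp$. Read with $\mathbb{B}^\sharp=\mathbb{B}$, it gives $d_-(1)=\mathbb{B}_1(1)=-e_1$, which fails your own test $a=0$, $b=1$, and it gives total sign $(-1)^{a+b}$.
\item Your Step~3 count (one factor $-1$ per $d_-$) silently uses a different convention from that formula.
\end{itemize}
Leaving the normalization to be fixed by small cases means the key identity is not established as written.

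The missing step is one line. Take $G\in\Lambda[y_1,\dots,y_{k-1}]$; the $y_i$ with $i<k$ are untouched by $X\mapsto X-(q-1)y_k$ and by the extraction in $y_k$. Set $z=y_k^{-1}$ and note $\sum_{i\ge0}(-1/y_k)^ie_i[X]=\mathrm{Exp}[-zX]$. Then
\begin{align*}
d_-\bigl(y_k^{m}G\bigr)=-\,G[X-(q-1)y_k]\,\mathrm{Exp}[-X/y_k]\Big|_{y_k^{-1-m}}=-\,G[X-(q-1)z^{-1}]\,\mathrm{Exp}[-zX]\Big|_{z^{m+1}}=-\mathbb{B}_{m+1}G.
\end{align*}
Peeling off $y_{a+b},\dots,y_{a+1}$ (with $m=0$) and then $y_a,\dots,y_1$ (with $m=1$) gives $d_-^{a+b}(y_1\cdots y_a)=(-1)^{a+b}\mathbb{B}_2^a\mathbb{B}_1^b(1)$. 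From there your sign count $(-1)^{a+b}(-1)^{2a+b}=(-1)^a$ is correct, and so are your checks $d_-(1)=e_1$ and $d_-(y_1)=-e_2$.

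A minor point in Step~1: $d_+^*$ is not the $d_+$-formula with $q\mapsto q^{-1}$. That operator would give $d_+^*(y_1)=(1-q^{-1})y_1+y_2\neq y_2\,d_+^*(1)$, contradicting $d_+^*y_i=y_{i+1}d_+^*$ in Lemma~\ref{lem-d+yi}. All you need is $(d_+^*)^{a+b}(1)=1$, which is what the paper uses.
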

\begin{proof}
In the proof of Lemma 5.6 in \cite{MR3787405}, it was shown that
\begin{align*}
d_{-}^{m}y_{1}^{r_1}y_{2}^{r_2}\cdots y_{k+m}^{r_{k+m}}d_{+}^{k+m}(1)=(-1)^{m}y_{1}^{r_1}y_{2}^{r_2}\cdots y_{k}^{r_k}\mathbb{B}_{r_{k+1}+1}\mathbb{B}_{r_{k+2}+1}\cdots \mathbb{B}_{r_{k+m}+1}(1).
\end{align*}
Setting $k=0$, $m=a+b$, $r_1=\cdots=r_a=1$, and $r_{a+1}=\cdots=r_{a+b}=0$, we obtain
\begin{align*}
d_{-}^{a+b}y_1y_2\cdots y_a (d_{+}^{*})^{a+b}(1)&=d_{-}^{a+b}y_1y_2\cdots y_a d_{+}^{a+b}(1)\\
&=(-1)^{a+b}\mathbb{B}_{2}^{a}\mathbb{B}_{1}^{b}(1)\\
&=(-1)^{a+b}(-1)^{b}\omega\mathbb{H}_{2}^{a}\mathbb{H}_{1}^{b}\omega(1)\\
&=(-1)^{a}q^{n(2^a1^b)}\omega\tilde{H}_{2^a1^b}[X;0,q^{-1}].
\end{align*}
Here, the first equality holds because the operators $d_{+}$ and $d_{+}^{*}$ act trivially on $1$, meaning $d_{+}^{a+b}(1)=(d_{+}^{*})^{a+b}(1)=1$. The final equality follows from Lemma \ref{lem-HHtoHt}.
\end{proof}

\begin{proof}[Second proof of Theorem 4.1]
First, observe that $d_{-}^{a+b}y_1y_2\cdots y_a (d_{+}^{*})^{a+b}(1)\in \Lambda$. By Lemma \ref{lem-dplusdminustoHL} and \ref{lem-mathcalN}, we obtain
\begin{align*}
\mathcal{N}d_{-}^{a+b}y_1y_2\cdots y_a(d_{+}^{*})^{a+b}(1)&=\mathcal{N}((-1)^{a}q^{n(2^a1^b)}\omega\tilde{H}_{2^a1^b}[X;0,q^{-1}])\\
&=\nabla'((-1)^{a}q^{-n(2^a1^b)}\omega\tilde{H}_{2^a1^b}[-X;0,q])\\
&=\nabla'((-1)^{a}q^{-n(2^a1^b)}(-1)^{2a+b}\tilde{H}_{2^a1^b}[X;0,q])\\
&=(-1)^{a}q^{-n(2^a1^b)}\nabla\tilde{H}_{2^a1^b}[X;0,q].
\end{align*}

On the other hand, utilizing the commutation relations in Lemma \ref{lem-d+yi}, we have
\begin{align*}
d_{-}^{a+b}y_1y_2\cdots y_a(d_{+}^{*})^{a+b}(1)&=d_{-}^{a+b}y_1y_2\cdots y_{a-1}d_{+}^{*}y_{a-1}(d_{+}^{*})^{a+b-1}(1)\\
&=d_{-}^{a+b}y_{1}d_{+}^{*}y_1y_2\cdots y_{a-1}(d_{+}^{*})^{a+b-1}(1)\\
&=d_{-}^{a+b}(y_{1}d_{+}^{*})^{a}(d_{+}^{*})^{b}(1)\\
&=(-1)^{a}t^{-a}q^{\sum_{k=b}^{b+a-1}(-k-1)}d_{-}^{a+b}(z_{1}d_{+})^{a}(d_{+}^{*})^{b}(1)\\
&=(-1)^{a}t^{-a}q^{\binom{b+1}{2}-\binom{a+b+1}{2}}d_{-}^{a+b}z_1z_2\cdots z_{a}d_{+}^{a}(d_{+}^{*})^{b}(1)\\
&=(-1)^{a}t^{-a}q^{\binom{b+1}{2}-\binom{a+b+1}{2}}d_{-}^{a+b}z_1z_2\cdots z_{a}d_{+}^{a+b}(1)
\end{align*}
Since $\mathcal{N}y_i=z_i\mathcal{N}$ and $\mathcal{N}$ is an involution, it follows that $\mathcal{N}z_i=y_i\mathcal{N}$. Applying $\mathcal{N}$ to the evaluation above gives
\begin{align*}
\mathcal{N}(d_{-}^{a+b}y_1y_2\cdots y_a(d_{+}^{*})^{a+b}(1))&=\mathcal{N}((-1)^{a}t^{-a}q^{\binom{b+1}{2}-\binom{a+b+1}{2}}d_{-}^{a+b}z_1z_2\cdots z_{a}d_{+}^{a+b}(1))\\
&=(-1)^{a}t^{a}q^{\binom{a+b+1}{2}-\binom{b+1}{2}}d_{-}^{a+b}y_1y_2\cdots y_{a}(d_{+}^{*})^{a+b}(1)\\
&=(-1)^{a}t^{a}q^{\binom{a+b+1}{2}-\binom{b+1}{2}}(-1)^{a}q^{n(2^a1^b)}\omega\tilde{H}_{2^a1^b}[X;0,q^{-1}],
\end{align*}
where the final equality follows from Lemma \ref{lem-dplusdminustoHL}.

Comparing the two evaluations of $\mathcal{N}\big(d_{-}^{a+b}y_1y_2\cdots y_a(d_{+}^{*})^{a+b}(1)\big)$, we conclude that
\begin{align*}
(-1)^{a}\nabla\tilde{H}_{2^a1^b}[X;0,q]&=t^{a}q^{2\cdot n(2^a1^b)+\binom{a+b+1}{2}-\binom{b+1}{2}}\omega\tilde{H}_{2^a1^b}[X;0,q^{-1}]\\
&=t^{a}q^{\binom{2a+b}{2}+\binom{a+b}{2}}\omega\tilde{H}_{2^a1^b}[X;0,q^{-1}].
\end{align*}
Let $\Phi$ denote the map swapping $q$ and $t$. It is then evident that $\nabla\Phi=\Phi\nabla$. This completes the proof.
\end{proof}

\bibliographystyle{amsalpha}
\bibliography{Biblebib}

\end{document}